\DeclareFontFamily{U}{rsfs}{\skewchar\font127}
\DeclareFontShape{U}{rsfs}{m}{n}{%
  <-6> rsfs5
  <6-8> rsfs7
  <8-> rsfs10
}{}
\renewcommand{\O}{\mathcal{O}}
\newcommand{\C}{\mathbb{C}}
\newcommand{\F}{\mathbb{F}}
\renewcommand{\P}{\mathbb{P}}
\newcommand{\A}{\mathbb{A}}
\renewcommand{\bar}{\overline}
\renewcommand{\i}{^{-1}}
\newcommand{\Aut}{\mbox{Aut}}
\newcommand{\Spec}{\mbox{\small{Spec }}}
\newcommand{\Ext}{\mbox{\small{Ext}}}
\newcommand{\ra}{\rightarrow}
\newcommand{\sub}{\subset}
\newcommand{\isom}{\cong}
\newcommand{\iso}{\cong}
\newcommand{\inj}{\hookrightarrow}
\newcommand{\surj}{\twoheadrightarrow}
\newtheoremstyle{natheorem}
  {}
  {}
  {\addtolength{\leftskip}{1em} \normalsize}
  {0em}
  {\bfseries \normalsize}
  {.}
  {.5em}
  {}
\theoremstyle{natheorem}
\newtheorem{prop}{Proposition}
\newtheorem{thm}[prop]{Theorem}
\newtheorem{cor}[prop]{Corollary}
\newtheorem{defn}[prop]{Definition}
\newtheorem{lemma}[prop]{Lemma}
\newtheorem*{prb*}{Problem}
\newtheorem{construction}[prop]{Construction}
\newcommand{\Sym}{\mbox{\small\textbf{Sym}}}
\newcommand{\Bun}{\mbox{\small\textbf{Bun}}}
\newcommand{\dlog}{\mbox{\small\textbf{dlog}}}
\renewcommand{\H}{\mathscr{H}}
\renewcommand{\O}{\mathcal{O}}
\newcommand{\U}{\mathcal{U}}
\newcommand{\END}{\mathcal{E}nd}
\newcommand{\HOM}{\mathcal{H}om}
\newcommand{\id}{\mbox{id}}
\renewcommand{\S}{\mathcal{S}}
\renewcommand{\sl}{\mathfrak{sl}}
\renewcommand{\phi}{\varphi}
\renewcommand{\H}{\mathbb{H}}
\renewcommand{\part}{\partial}
\renewcommand{\C}{\mathscr{C}}
\newcommand{\Tor}{\mbox{Tor}}
\newcommand{\E}{\mathbb{E}}
\newcommand{\Res}{\mbox{\small\textbf{Res}}}
\renewcommand{\C}{\mathbb{C}}
\newcommand{\Hyp}{\mbox{\textbf{H}}}
\newcommand{\Di}{\Delta_\infty}
\renewcommand{\F}{\mathcal{F}}
\title{Isomorphisms between moduli of parabolic Higgs bundles}
\author{Nathan Clement}
\begin{document}
\maketitle
\begin{abstract}
  In this paper we study four families of moduli problems which give rise to two dimensional examples of the Hitchin map.  
  Using a few Fourier-Mukai transforms on the corresponding spectral curves, we give isomorphisms between these moduli problems.
\end{abstract}
\setcounter{tocdepth}{1}
\tableofcontents
\section{Introduction}
In 1987, Nigel Hitchin studied the moduli spaces of what are now called Higgs bundles in his paper \textit{Stable bundles and Integrable Systems}.  
A Higgs bundle on a fixed Riemann surface $X$ is a vector bundle $V$ together with a ``Higgs field'' which is nothing but an $\O_X$ linear map $\phi:V\ra \Omega_X\otimes V$.
Such an object encodes a cotangent vector to the moduli space of vector bundles on $X$ at the point $[V]$.
This computation is not hard to check:
a tangent vector at $V$ is a vector bundle $\tilde{V}$ on $X\times \Spec \C[\epsilon]/\epsilon^2$ that extends the bundle $V$ on the closed subscheme $X$.
Such a thing is called a first order deformation of $V$, and a simple argument shows the first order deformations of $V$ are given by $\Ext^1(V,V)\isom H^1(X,\END (V))$. 
Since the bundle $\END (V)$ is self dual, Serre duality tells us that a dual vector is a global section of $\Omega_X\otimes\END (V)$.
Such a ``twisted endomorphism'' on a bundle has a ``spectrum'' which consists of a curve $S$ (the ``spectral curve'') 
embedded in the total space of the twisting line bundle $|\Omega|$ over $X$.
The pair $(V,\phi)$ may be studied as a module $M_\phi$ on the total space of this line bundle, with the curve $S$ a sort of upper bound on its support.
One of the amazing features of Hitchin's paper is that he finds that the possible spectral curves form an affine space of dimension half that of the whole moduli space.
The map to this affine space is now called the Hitchin map and has deep significance for the symplectic geometry of the space.

In this paper we study vector bundles with ``logarithmic'' Higgs fields.
These objects are the same as Higgs bundles except that the endomorphism is twisted not by the line bundle of differentials, 
but by the line bundle of differentials allowed first order poles at some specified points of $X$.
In fact, the Riemann surface in this paper is always $\P^1$, which has no good moduli spaces of vector bundles.
Instead, the logarithmic Higgs fields studied in this paper are objects of a (twisted) cotangent space of what are called parabolic vector bundles on $\P^1$.
In this paper, ``parabolic bundle''
\footnote{As opposed to a vector bundle with a (partial) filtration by sub-bundle. Some authors have called these fiberwise flags ``quasi-parabolic''.}
will mean a vector bundle together with partial flag data at some special fibers.
When studying logarithmic Higgs fields on parabolic bundles, we will call the whole ensemble a ``parabolic Higgs bundle'',
and we will fix some conjugacy classes of the residues of the logarithmic Higgs field.
The flag data at a special point of $\P^1$ will be related to the eigenspaces of the residues of the field.

In \cite{simpson}, Simpson details similar, closely related moduli problems in which the Higgs field is replaced by a (logarithmic) connection.
One of his computations shows that there are only four cases of numerical invariants which lead to two dimensional moduli spaces.
The first of these is related to the Painlev\'{e} VI equation and is studied in \cite{conn}.
In all four cases, the moduli problem takes a positive integer $r$ as a parameter (the rank of the underlying vector bundle is a fixed multiple of $r$, for one).
The aim of this paper is to study these four families of moduli problems and show that, within each family, the moduli space does not depend on the parameter $r$.
The isomorphisms constructed are not abstract but have modular interpretations given by certain integral transforms on the associated spectral curves.

In Section 2, we define the moduli problem, construct the spectral curve, and refine the notion of spectral curve by showing that the module $M_\phi$
may be lifted to a blowup of the total space of the twisting line bundle.

In Section 3, we use intersection theory on surfaces to study the spectrum $S$ and its refinement and ultimately decide what the Hitchin map is in this context.
The main goal is to show that the possible support subschemes for the module $M_\phi$ do not depend on the parameter $r$.
In other words, for cases with $r>1$, we show that the minimal polynomial of $\phi$ is as small as possible given the numerical constraints,
and is very much lower degree than the characteristic polynomial.
Said yet another way, we prove that the module $M_\phi$ is supported on the reduced subscheme of $S$ and that this scheme is a Gorenstein curve of arithmetic genus one.

Section 4 is a study of the deformation theory of the moduli problem. 
This is done to ensure that the module $M_\phi$ is supported on vanishing scheme of the much smaller minimal polynomial even over arbitrary families parametrized by non-reduced schemes.
Once this calculation has been done, one is assured that the module $M_\phi$ is ``pure of dimension one'' on the reduced spectrum of $S$, 
so in Section 5 we finally apply our Fourier-Mukai transforms to reduce the parameter $r$.
These integral transforms are a means of repeating the result, in families, that Atiyah proved in \cite{atiyah}.
This reduction allows us to prove that the moduli problem is independent of $r$,
and ultimately that the coarse spaces are isomorphic to an open subset of the blowup described in Section 3.

\textbf{Acknowledgements:} The author would like thank Dima Arinkin for posing the question and for countless helpful discussions.
Thanks also to Ed Dewey for his thoughtful comments on the presentation and organization of this paper.
During the course of this research, the author has been partially supported by NSF grants DMS-1452276 and DMS-1502553.

\section{Definitions and Constructions}

The moduli problems of interest in this paper are logarithmic Higgs bundles on $\P^1$ with fixed residue endomorphisms at the points of $\P^1$ where the Higgs field is allowed a pole. 
A logarithmic differential on a smooth curve is a differential 1-form with order one poles at some points.
Let $Z$ be an effective divisor on $\P^1$ consisting of some distinct points $p_i$ all of multiplicity one. 
By $\Omega'$ I mean the line bundle of differentials on $\P^1$ allowed simple poles at the chosen points i.e. $\Omega(Z)$. 

\begin{defn}
  For each $p_i\in Z$, there is a canonical morphism of $\O$-modules $\Res_i:\Omega'\ra \O_{p_i}$. 
  This map is called the residue map (at $p_i$) and it does not depend on a choice of coordinates: 
  to get a local differential of residue 1, pick $\pi$ a local parameter at $p_i$ and take the differential $\frac{d\pi}{\pi}$, sometimes written $\dlog(\pi)$. 
  Its kernel consists of differentials of $\Omega'$ regular at $p_i$. 
  Occasionally I will want to use an extension of this map-- $\Res_i^d:\Sym^d \Omega' \ra \O_{p_i}$; 
\end{defn}
These higher residue maps are multiplicative with respect to each other,
and the kernel of $\Res_i^d$ is the subsheaf of $\Sym^d \Omega'$ consisting of symmetric differentials with poles of order less than $d$ at $p_i$.  
So for any positive exponent $d$ we have canonical short exact sequences:
\[0\ra (\Omega')^{\otimes d}(-Z) \ra (\Omega')^{\otimes d} \ra \bigoplus_i \O|_{p_i}\ra 0 \]
The most important basic result about logarithmic differentials is the following:
\begin{lemma}
  The sum of all the residues of a logarithmic differential is 0. 
  \label{trace}
\end{lemma}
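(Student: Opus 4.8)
The statement to prove is the classical \emph{residue theorem} for logarithmic differentials on $\P^1$: the sum of all residues of a global logarithmic one-form vanishes. Let me sketch a plan.

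The plan is to exploit the short exact sequence with $d=1$ that was written just above the lemma, namely
\[0\ra \Omega'(-Z) \ra \Omega' \ra \bigoplus_i \O|_{p_i}\ra 0,\]
where $\Omega'(-Z) = \Omega_{\P^1}$ is the ordinary canonical bundle $\O(-2)$. Taking the long exact sequence in cohomology, I would first observe that $H^0(\P^1,\Omega') \to \bigoplus_i \O|_{p_i} = \bigoplus_i \k(p_i)$ is exactly the map sending a logarithmic differential to its tuple of residues (this is essentially the content of the definition of $\Res_i$). So the "sum of residues" is the composite of this map with the summation map $\bigoplus_i \k(p_i) \to \C$. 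The key point is to identify the cokernel of $H^0(\Omega') \to \bigoplus_i \k(p_i)$: from the long exact sequence it injects into $H^1(\P^1, \Omega_{\P^1}) = H^1(\P^1, \O(-2)) \cong \C$, which is one-dimensional by Serre duality (or by direct computation). Meanwhile $H^0(\P^1,\Omega_{\P^1}) = H^0(\P^1,\O(-2)) = 0$, so the connecting map $\bigoplus_i \k(p_i) \to H^1(\P^1,\Omega_{\P^1})$ is \emph{surjective} with kernel exactly the image of the residues. Hence the space of realizable residue-tuples is a hyperplane in $\bigoplus_i \k(p_i) \cong \C^{|Z|}$, cut out by a single linear functional.

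It then remains to check that this distinguished linear functional is (a nonzero multiple of) the sum $\sum_i$. The cleanest way is a local/global compatibility argument: the connecting homomorphism $\delta\colon \bigoplus_i \k(p_i) \to H^1(\P^1,\Omega_{\P^1})$ can be computed Čech-theoretically, and under the residue isomorphism $H^1(\P^1,\Omega_{\P^1})\riso \C$ one finds that $\delta$ applied to the generator $\dlog(\pi_i)$ at $p_i$ gives $1$ for every $i$ — this is just the statement that the Čech class of a principal-part differential has residue $1$, independent of which point it sits at. Therefore $\ker\delta = \{(c_i) : \sum_i c_i = 0\}$, and since every global logarithmic differential has residue-tuple in $\ker\delta$, the sum of its residues is zero. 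Alternatively, and perhaps more in the spirit of this paper, one can argue directly: any global section of $\Omega'$ is a rational differential $\omega$ on $\P^1$ whose only poles are simple poles among the $p_i$; writing $\omega = f\,dz$ in an affine coordinate and using that $\P^1 = \A^1 \cup \{\infty\}$, the sum of residues over the affine points equals $-\mathrm{Res}_\infty$ by the usual partial-fractions computation (expand $f$ in partial fractions, note $\mathrm{Res}_\infty(dz/(z-a)) = -1$), so the total sum over all of $\P^1$ is $0$.

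The main obstacle is essentially bookkeeping rather than anything deep: one must be careful that the map $H^0(\Omega')\to\bigoplus_i\O|_{p_i}$ appearing in the exact sequence really is the tuple of residue maps $(\Res_i)$ and not some other $\O_{p_i}$-linear identification (they differ by units, which is harmless for the "sum $=0$" conclusion but must be tracked if one wants the functional to be \emph{exactly} $\sum_i$), and that the identification $\Omega'(-Z)\cong\Omega_{\P^1}$ is the obvious one. Once the exact sequence is set up correctly, the cohomological input is just $h^0(\O(-2))=0$ and $h^1(\O(-2))=1$, and the result falls out. I would present the partial-fractions version as the primary proof since it is self-contained and elementary, with the cohomological picture as a remark explaining why the answer is forced to be a single linear relation.
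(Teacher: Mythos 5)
Your primary argument---writing the differential in an affine coordinate as a sum of terms $\frac{c\,dx}{x-a}$ and checking that each such term contributes $c$ to the residue at $x=a$ and $-c$ to the residue at infinity---is exactly the proof the paper gives. The cohomological framing via $0\to\Omega\to\Omega'\to\bigoplus_i\O|_{p_i}\to 0$ and the connecting map into $H^1(\P^1,\Omega)\cong\C$ is a correct supplementary remark but is not needed for the conclusion.
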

\begin{proof}
  This is true on any smooth projective curve, but we'll prove it here only for $\P^1$.
  Let $x$ be a coordinate for the standard first affine chart of $\P^1$. 
  Any logarithmic differential is represented here as a sum of terms of the form $\frac{c\: d x}{x-a}$. 
  Re-writing this in terms of $y=1/x$ we find:
  \[ \frac{c\: d(x-a)}{x-a} = \frac{c\: d x}{x-a} = \frac{c\: d (1/y)}{(1-ay)/y} = \frac{(-c/y^2)\: d y}{(1-ay)/y} = \frac{1}{(1-ay)} \frac{-c\: d y}{y}\]
  The function $\frac{1}{(1-ay)}$ is 1 at infinity, so the term $\frac{c\: d(x-a)}{x-a}$ contributes $c$ to the residue at $x=a$ and $-c$ to the residue at infinity. 
  Since $\omega$ is a finite sum of such terms, this proves the lemma.
\end{proof}

The residue map can also be extended in an obvious way to vector bundles.  
Let $W$ be a locally free sheaf on $\P^1$, then there is a canonical identification $(\Omega'\otimes W)|_{p_i}\ra W|_{p_i}$. 
This can be obtained by tensoring the usual residue map with the identity map of $W|_{p_i}$. 
In particular, when $W$ is a sheaf of endomorphisms of a vector bundle, this will allow me to talk about the residue of an endomorphism. 

We are now able to define the fundamental moduli functors of interest in this paper. 
In \cite{simpson}, Carlos Simpson computes the possible moduli spaces of parabolic Higgs bundles which can be two dimensional.
There are four cases of the moduli problem we will consider\textemdash
each requires fixing some points of $\P^1$ and then fixing some eigenvalues over at each of the points.
Some of these eigenvalues will enjoy extra multiplicity over the other eigenvalues, which we'll denote by $c_i$.
Unless stated otherwise, $c_i=1$ for every $p_i\in Z$.
In each of the cases, the $\P^1$ vector bundle receiving a Higgs field will always have rank a multiple of some fixed integer $\Theta$.
\begin{enumerate}[I.]
  \item Fix four points $\{p_1,p_2,p_3,p_4\}=Z$ of $\P^1$ and residue eigenvalues $\lambda_{i,1},\lambda_{i,2}$; $\Theta=2$.
  \item Fix three points $\{p_1,p_2,p_3\}=Z$ and three residue eigenvalues at each point; $\Theta=3$.
  \item Fix three points $\{p_1,p_2,p_3\}=Z$ along with two residue eigenvalues at $p_1$, $c_1=2$, and four each at $p_2$ and $p_3$; $\Theta=4$.
  \item Fix three points $\{p_1,p_2,p_3\}=Z$ and two eigenvalues at $p_1$, $c_1=3$, three at $p_2$, $c_2=2$, and six at $p_3$; $\Theta=6$.
\end{enumerate}

With this data fixed, we are able to define the moduli spaces of study in this paper.
\begin{defn}
  Let $T$ be a locally noetherian $\C$-scheme.
  By a rank $r$, degree $d$, \textbf{logarithmic Higgs bundle} parametrized by $T$, 
  we mean a vector bundle $V$ on $\P^1_T$ of rank $\Theta r$
  with a map $\phi:V\ra \Omega_{\P^1_T/T}(Z\times T)\otimes V$ (called the \textbf{Higgs field}) 
  and a trivialization $\iota:\det V \xrightarrow{\sim} \O_{\P^1_T}(d)$ satisfying the following conditions. 
  For $p_i\in Z$ the vector bundle $V_i:=V|_{p_i\times T}$ decomposes as a direct sum $\bigoplus V_{\lambda_{i,j}}$ of bundles
  such that $V_{\lambda_{i,j}}$ is rank $c_i r$ and $\Res_i\phi$ acts on $V_{\lambda_{i,j}}$ as the scalar $\lambda_{i,j}$.
  An isomoprhism of such objects is given by an isomorphism of vector bundles that commutes with the Higgs field map and the trivializations $\iota$. 
  This stack we will denote by $\H_{r,d}^{i}$ (resp. $\H_{r,d}^{ii}$, $\H_{r,d}^{iii}$, or $\H_{r,d}^{iv}$).
  We also require that the bundle $V$ is slope stable, but only with respect to $\phi$-invariant sub-bundles.
  \label{moduliproblem}
\end{defn}

\begin{center}
\[  
\begin{array}{r|l}
  \mbox{Case I} &
  \Res_i(\phi)\sim
  \left(
  \begin{array}{c|c}
    \lambda_{i,1}I_r&0\\
    \hline
    0&\lambda_{i,2}I_r
  \end{array}
  \right)
  \\
  \hline
  \mbox{Case II} &
  \Res_i(\phi)\sim
  \left(
  \begin{array}{c|c|c}
    \lambda_{i,1}I_r&0&0\\
    \hline
    0&\lambda_{i,2}I_r&0\\
    \hline
    0&0&\lambda_{i,3}I_r
  \end{array}
  \right)
  \\
  \hline
  \multirow{2}{*}{Case III}&
  \Res_1(\phi)\sim
  \left(
  \begin{array}{c|c}
    \lambda_{1,1}I_{2r}&0\\
    \hline
    0&\lambda_{1,2}I_{2r}
  \end{array}
  \right)
  \\
  \cline{2-2}
  &
  \Res_i(\phi)\sim
  \left(
  \begin{array}{c|c|c|c}
    \lambda_{i,1}I_{r}&0&0&0\\
    \hline
    0&\lambda_{i,2}I_{r}&0&0\\
    \hline
    0&0&\lambda_{i,3}I_{r}&0\\
    \hline
    0&0&0&\lambda_{i,4}I_{r}
  \end{array}
  \right), i=2,3
  \\
  \hline
  \multirow{3}{*}{Case IV}&
  
  \Res_1(\phi)\sim
  \left(
  \begin{array}{c|c}
    \lambda_{1,1}I_{3r}&0\\
    \hline
    0&\lambda_{1,2}I_{3r}
  \end{array}
  \right)
  \\
  \cline{2-2}
  &
  
  \Res_2(\phi)\sim
  \left(
  \begin{array}{c|c|c}
    \lambda_{2,1}I_{2r}&0&0\\
    \hline
    0&\lambda_{2,2}I_{2r}&0\\
    \hline
    0&0&\lambda_{2,3}I_{2r}
  \end{array}
  \right)
  \\
  \cline{2-2}
  &
  
  \Res_3(\phi)\sim
  \left(
  \begin{array}{c|c|c|c}
    \lambda_{3,1}I_{r}&0&\cdots&0\\
    \hline
    0&\lambda_{3,2}I_{r}&\cdots&0\\
    \hline
    \vdots&\vdots&\ddots&\vdots\\
    \hline
    0&0&\cdots&\lambda_{3,6}I_{r}
  \end{array}
  \right)
  \\
\end{array}
\]
\small Table 1: Required conjugacy classes of the residue endomorphisms.
\end{center}
\pagebreak

When we just want to refer to one of these moduli problems without picking a case, we will write $\H_{r,d}$

\begin{defn}
  There is another version of the moduli problem where one neglects the map $\iota$ and instead only requires that $\det V$ is locally isomorphic to $\O(d)$.
  This less rigid problem we denote by $\H^\circ_{r,d}$ ($\H^{\circ,i}_{r,d}$, etc.).
\end{defn}

In cases I and II, it is easy to see that the only requirement for existence of such a bundle is that the sum of all the eight (resp. nine) eigenvalues is zero.
Proving existence in cases III and IV is not so easy, as we shall see later.

Though the definition of $\H_{r,d}$ gives a direct sum decomposition of the fiber of $V$ at $p_i\in Z$, we will often only want to remember the data of a flag.
To observe the conventions established for parabolic Higgs bundles, let $L_i^\bullet$ be the decreasing filtration of $V|_{p_i}$ given by the existing ordering on the eigenvalues.
Expressly: $L_i^l$ consists of the sum of eigenspaces $V_{\lambda_{i,j}}$ for all but the last $l$ eigenvalues at $p_i$.

\begin{prop}
  The moduli problems $\H$ and $\H^\circ$ are algebraic stacks.
\end{prop}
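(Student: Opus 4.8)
The plan is to exhibit $\H_{r,d}$ and $\H^\circ_{r,d}$ as locally closed (or at least constructible) substacks of a stack already known to be algebraic, namely a stack of coherent sheaves with extra structure on the fixed projective family $\P^1_T \to T$. Concretely, I would first consider the stack $\mathbf{Coh}(\P^1/\C)$ of flat families of coherent sheaves on $\P^1$ — this is a standard algebraic stack — and restrict to the open substack of vector bundles of rank $\Theta r$. Over this substack, the datum $\phi \colon V \to \Omega'\otimes V$ together with $\iota\colon \det V \riso \O(d)$ is a section of a (relatively affine, in fact linear) scheme over the base: the Higgs field lives in the vector bundle $\pi_* \HOM(V, \Omega'\otimes V)$ on the moduli of bundles (after the usual Quot-scheme presentation, this is representable), and the trivialization $\iota$ lives in a $\G_m$-torsor over the locus where $\det V \iso \O(d)$, which is again representable. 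The upshot is that forgetting the residue conditions and stability, one gets an algebraic stack $\widetilde{\H}_{r,d}$ by these two representable (affine, resp. $\G_m$-torsor) constructions over $\mathbf{Bun}_{\Theta r}(\P^1)$.

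Next I would impose the residue conditions from Table 1. For each $p_i\in Z$ the residue $\Res_i(\phi)$ is an endomorphism of the vector bundle $V_i = V|_{p_i\times T}$ on $T$, obtained via the canonical identification $(\Omega'\otimes W)|_{p_i}\riso W|_{p_i}$ from Section 2; this is a morphism of $T$-schemes $\widetilde{\H}_{r,d} \to \underline{\End}(V|_{p_i})$. Requiring $\Res_i(\phi)$ to have a fixed conjugacy class (semisimple with prescribed eigenvalues $\lambda_{i,j}$ and prescribed multiplicities $c_i r$) is a locally closed condition: semisimplicity with a given characteristic polynomial having distinct roots cuts out a locally closed subscheme of the space of endomorphisms, since "the minimal polynomial divides $\prod_j(t-\lambda_{i,j})$" is closed and "the characteristic polynomial equals $\prod_j(t-\lambda_{i,j})^{c_i r}$" is closed, and together with the rank conditions on the eigenspaces $\ker(\Res_i\phi - \lambda_{i,j})$ (which give the $V_{\lambda_{i,j}}$ of Definition \ref{moduliproblem}) these conditions are locally closed. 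Intersecting over all $p_i \in Z$ gives a locally closed, hence algebraic, substack; note that the direct-sum decomposition $V_i = \bigoplus_j V_{\lambda_{i,j}}$ required in the definition is automatic once $\Res_i(\phi)$ is semisimple with those eigenvalues, so nothing further is needed.

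Finally I would cut down by the stability condition. Slope stability with respect to $\phi$-invariant sub-bundles is an open condition in flat families: by a standard boundedness argument (the destabilizing $\phi$-invariant subsheaves of a bounded family of Higgs bundles form a bounded family, as in Simpson's or Nitsure's construction of moduli of Higgs bundles), the locus where no $\phi$-invariant sub-bundle violates the slope inequality is open in $\widetilde\H_{r,d}$ after imposing the residue conditions. This yields $\H^\circ_{r,d}$ as an open substack of an algebraic stack, hence algebraic; and $\H_{r,d}$ differs from a corresponding construction only by remembering the trivialization $\iota$, i.e. it is a $\G_m$-torsor over (an open substack related to) $\H^\circ_{r,d}$, so it too is algebraic. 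I expect the main obstacle to be the boundedness statement needed for openness of stability — making precise that $\phi$-invariant destabilizing subsheaves stay in a bounded family — though for $\P^1$ with fixed rank and degree this is mild and can be quoted from the literature (e.g. \cite{simpson}); the residue-conjugacy-class step is routine linear algebra in families, and the representability of the Higgs field and the trivialization is formal.
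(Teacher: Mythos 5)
Your proposal is correct and follows essentially the same route as the paper: a representable (affine/linear) construction of the Higgs field and the trivialization over the stack of bundles on $\P^1$, followed by imposing the residue conditions as algebraic (locally closed) conditions and stability as an open condition. The one point needing more care is your assertion that $\pi_*\HOM(V,\Omega'\otimes V)$ is a vector bundle --- it need not be, nor need it commute with base change --- and the paper's proof spends its main effort on exactly this, representing $R\pi_*(\END V\otimes\Omega')$ by a two-term complex of bundles $A^0\ra A^1$ and realizing the sections functor as the kernel of a map of bundles.
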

\begin{proof}
  Rather than directly demonstrating a schematic cover of the stack, consider instead the map to $\Bun_{SL_{\Theta r}^d}\P^1$ - the stack defined as $\H_{r,d}$ but without any Higgs field $\phi$. 
  Now consider a scheme $S$ with a map $(V,\iota)$ to $\Bun_{SL_{\Theta r}^d}\P^1$. 
  We wish to form the fiber product $S\times_{\Bun} \H_{r,d}$; we are hoping for this to be an $S$-scheme.
  With no conditions on $\phi$, the result should be an $S$ scheme which represents $\H^0(-,\END V\otimes \Omega')$.
  That is, for any $f:T\ra S$, the $T$ points should be $H^0(\P^1_T,f^*\END V\otimes \Omega')$.
  
  This is representable by an $S$-scheme in the following way. We want to know $\P^1_T$ global sections of $f^*\END V\otimes \Omega'$.
  By base change, this is the same as $T$ global sections of $R\pi_* \END V\otimes \Omega'$.
  The latter object can be represented as a two term complex $A^0\xrightarrow{d}A^1$ concentrated in degrees zero and one, since we are pushing forward from a relative curve.
  We may even assume that $A^1$ is a vector bundle and $A^0$ is something coherent.
  If $A^0$ isn't flat, there is some point $p$ of $S$ at which it will have a non-trivial first Tor group.
  This would cause the derived restriction of the pushforward complex to have negatively graded cohomology.
  On the other hand, this should just be the cohomology of $\END V\otimes \Omega'$ restricted to $\P^1_{\kappa(p)}$.
  So $A^0$ is a flat coherent sheaf and hence a vector bundle.
  Locally on $S$, $A^0$ and $A^1$ are both free, so the functor we are trying to represent is nothing but the kernel of some matrix $d$
  and this may be represented by some homogeneous linear subset of a trivial $S$ vector bundle.
  
  The conditions on the residues of $\phi$ are all algebraic conditions on this total space,
  and the condition of stability is Zariski open. 
  Since $\H_{r,d}$ has a representable morphism to an algebraic stack, it is also algebraic.
\end{proof}

\subsection{Spectral Curves}

Let $X$ be any scheme and let $L$ be an invertible sheaf on $X$.
We have the standard construction $\pi:|L|\ra X$, where $\pi$ is the affine morphism with structure sheaf $\Sym^\bullet L\check{}$.
The modules on this line bundle have an easy description in terms of $X$ and $L$.
\begin{construction}
  Suppose we have $F$ a quasicoherent sheaf on $X$ equipped with $\phi:F\ra L\otimes V$ an $\O$-linear map. 
  This gives $F$ the structure of a module for the algebra $\Sym^\bullet L\check{}$.
  In other words, $(V,\phi)$ is a module on the total space $|L|$\textemdash we will call it $M_\phi$. 
  
  Conversely, consider a module $M$ on the total space $|L|$.
  This data consists of the $\O_X$ module $\pi_*M$ along with an action map
  \[F\otimes_{\O_X}\Sym^\bullet L\check{}\ra F\]
  This action is multiplicative in the symmetric algebra factor, so it is completely determined by the factor
  \[F\otimes_{\O_X}L\check{}\ra F\]
  but after tensoring by $L$, this is the same as an $L$-twisted endomorphism of $F$.
  \label{totalspace}
\end{construction}

In our case the module $F$ is a vector bundle $V$,
and the Cayley-Hamilton theorem gives a good upper bound on the support of the module $M_\phi$. 
There is a characteristic polynomial
\[P_\phi=y^n - a_1\cdot y^{n-1}+a_2\cdot y^{n-2} \cdots (-1)^n a_n\]
Where $a_1$ is the trace of $\phi$, $a_n$ is the determinant, and in general $a_i$ is a global section of $L^i$. 
\begin{defn}
  If we let the heretofore formal variable $y$ stand for the tautological section of $\pi^*L$ on $|L|$, then $P_\phi$ is a global section of $\pi^*L^n$ on $|L|$. 
  The vanishing scheme of $P_\phi$ we will denote $S_\phi$ or $S$, for it is the spectrum of the operator $\phi$. 
  Note that the spectrum is a finite cover of the base $X$.
  \label{spectrum}
\end{defn}

\begin{lemma}
  The module $M_\phi$ is supported on the spectral cover $S_\phi$. 
  \label{cayleyhamilton}
\end{lemma}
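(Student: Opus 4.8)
The assertion is a geometric restatement of the Cayley--Hamilton theorem, so the plan is to reduce to the classical statement over a commutative ring and then globalize. First I would recall, via Construction \ref{totalspace}, that $M_\phi$ is the $\O_X$-module $V$ equipped with the $\Sym^\bullet L\check{}$-action in which the degree-one part is exactly $\phi$; concretely, multiplication by the tautological section $y$ of $\pi^*L$ is the map $M_\phi\ra\pi^*L\otimes M_\phi$ which under $\pi_*$ and the projection formula ($\pi_*M_\phi=V$, $\pi_*(\pi^*L\otimes M_\phi)=L\otimes V$) becomes $\phi\colon V\ra L\otimes V$. Consequently, for the section $P_\phi$ of $\pi^*L^n$, multiplication by $P_\phi$ on $M_\phi$ pushes forward to the $\O_X$-linear map
\[P_\phi(\phi)\colon V\longrightarrow L^n\otimes V,\qquad P_\phi(\phi)=\phi^{\circ n}-a_1\cdot\phi^{\circ(n-1)}+\cdots+(-1)^n a_n\cdot\id,\]
where $\phi^{\circ k}\colon V\ra L^k\otimes V$ is the $k$-fold iterate (apply $\phi$, then $\id_L\otimes\phi$, and so on) and $a_i\in\Gamma(X,L^i)$, so that each term $a_i\cdot\phi^{\circ(n-i)}$ indeed lands in $L^n\otimes V$. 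Thus it suffices to show this twisted evaluation $P_\phi(\phi)$ vanishes: once it does, $P_\phi$ annihilates $M_\phi$, hence $M_\phi$ is the pushforward of a sheaf on $\O_{|L|}/(P_\phi)$ and in particular, at any point of $|L|$ where $P_\phi$ is nonzero the stalk of $M_\phi$ vanishes, i.e.\ $\mathrm{Supp}(M_\phi)\subseteq V(P_\phi)=S_\phi$.

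To prove $P_\phi(\phi)=0$ I would work locally on $X$. Over an affine open $U=\Spec R$ on which $L$ has a trivializing section $t$, the dual generator identifies $\pi^{-1}(U)$ with $\Spec R[y]$ and turns $\phi$ into an honest $R$-linear endomorphism $A$ of the free module $V(U)$; the $a_i$ become the usual coefficients of $\det(y\cdot\id-A)$, and $P_\phi(\phi)$ becomes $P_A(A)$. The classical Cayley--Hamilton theorem, valid over any commutative ring via the adjugate identity $\mathrm{adj}(y\cdot\id-A)\cdot(y\cdot\id-A)=\det(y\cdot\id-A)\cdot\id$ in $M_m(R[y])$ followed by the substitution $y\mapsto A$, gives $P_A(A)=0$. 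Since $P_\phi(\phi)$ is a canonically defined morphism of sheaves and vanishes on each member of an affine cover of $X$ trivializing $L$, it vanishes globally. (One does not even need $V$ locally free here — the determinant trick would handle a general coherent $V$ — but since $V$ is a vector bundle this refinement is unnecessary.)

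The only genuine subtlety, and the step I would be most careful about, is bookkeeping the line-bundle twist: one must check that the formation of the characteristic coefficients $a_i$ and of the iterates $\phi^{\circ k}$ is compatible with the trivialization $L|_U\cong\O_U$ up to the expected powers of $t$, so that the locally computed relation $P_A(A)=0$ really is the restriction of the global morphism $P_\phi(\phi)\in\Gamma(X,\HOM(V,L^n\otimes V))$. This is routine but is precisely where a sign or twisting error would hide. Everything else — the identification of multiplication by $y$ with $\phi$, and the passage from ``$P_\phi$ annihilates $M_\phi$'' to the support statement — is formal.
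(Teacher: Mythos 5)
Your proposal is correct and follows essentially the same route as the paper: both reduce to the classical Cayley--Hamilton theorem by trivializing $L$ on an affine cover, identifying $\phi$ with an honest endomorphism there, and checking that the locally evaluated characteristic polynomial is the restriction of the global section $P_\phi$ up to the appropriate power of the trivializing section. Your write-up is somewhat more explicit about the twisting bookkeeping and the final passage from ``$P_\phi$ annihilates $M_\phi$'' to the support statement, but the underlying argument is the same.
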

\begin{proof}
  The construction and claim are both local on the base $X$, so I can assume that $X = \Spec A$ and that $s$ is a trivializing global section of $\Omega'$. 
  Then $\frac{\phi}{s}$ is an endomorphism of the vector bundle $V$. 
  The characteristic polynomial of $\frac{\phi}{s}$ is easy to deduce from the characteristic polynomial of $\phi$: its weight $i$ invariant is $\frac{a_i}{s^i}$. 
  Hence the operator 
  \[\sum_{i=0}^n (-1)^i \left(\frac{\phi}{s}\right)^{n-i}\frac{a_i}{s^i}\] 
  annihilates the vector bundle $V$ by the Cayley-Hamilton theorem. But this operator can be factored: 
  \[s^{-n} \sum_{i=0}^n (-1)^i(\phi)^{n-i} a_i=s^{-n} P_\phi\] 
\end{proof}

Note that both the equivalence $(V,\phi)\Leftrightarrow M_\phi$ and the construction of the characteristic polynomial respect pullbacks.

This construction prompts the definition of a moduli problem closely related to $\H_{r,d}^i$ and $\H_{r,d}^{ii}$.
Fix a test scheme $T$, we will work with $\Omega'$ twisted endomorphisms on $\P^1_T$.
For case I, let $y^2 - a_1 y + a_2$ be a characteristic polynomial whose value in the fiber at $p_i$ 
yields the characteristic polynomial of an operator with eigenvalues $\lambda_{i,1}$ and $\lambda_{i,2}$.
There is only one choice for $a_1$, but the coefficient $a_2$ moves in a one parameter family.

Likewise for case II, let $y^3-a_1 y^2+a_2 y-a_3$ be picked so that in its $p_i$ fiber it has roots $\lambda_{i,1},\lambda_{i,2},\lambda_{i,3}$.
Only $a_3$ has a one parameter family of options while the other two coefficients are determined.

\begin{defn}
  For any choice of characteristic polynomial $f$ as described above, we get a corresponding spectral curve $S_f$ over $\P^1$ which is a double (resp. triple) cover.
  On this curve $E_f$, pick $M$ a rank $r$, degree $2r+d$ (resp. $3r+d$) sheaf which is pure of dimension one on fibers over points of $T$, and which is flat over $\P^1_T$.
  Let $\pi:S_f\ra \P^1$ be the covering map and give an isomorphism $\iota:\det \pi_*M \ra \O(d)$. 
  Isomorphisms are taken to be isomorphisms of $|\Omega'|$ modules and are required to commute with $\iota$.
  Isomorphisms can only exist between two bundles on the same $S_f$.
  This defines a stack we will call $\E_{r,d}^{i}$ (resp. $\E_{r,d}^{ii}$).
  \label{substackdefn}
\end{defn}
Because of the previous construction, this stack has a natural map to the stack $\E_{r,d}^{i}$ (resp. $\E_{r,d}^{ii}$). 
\begin{lemma}
  This map of stacks is fully faithful and the characteristic polynomial associated to a module on $S_f$ is $f^r$.
  \label{substack}
\end{lemma}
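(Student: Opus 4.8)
\textbf{The plan.} I will construct the functor $\Phi$ directly from Construction~\ref{totalspace} and then verify each clause; here I take the target of the natural functor to be $\H_{r,d}^{i}$ (resp. $\H_{r,d}^{ii}$), i.e. $\E_{r,d}^{i}$ realizes certain logarithmic Higgs bundles as pushforwards from $S_f$. Over a test scheme $T$, an object of $\E_{r,d}^{i}$ is a sheaf $M$ on (the pullback of) $S_f$, pure of dimension one on $T$-fibers and flat over $\P^1_T$, together with $\iota\colon\det\pi_*M\riso\O(d)$. Composing the closed immersion $S_f\inj|\Omega'|$ with the projection to $\P^1$ presents $M$ as a module on $|\Omega'|_T$, so Construction~\ref{totalspace} produces $V:=\pi_*M$ together with a Higgs field $\phi_M\colon V\to\Omega'\ten V$. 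Since $S_f\to\P^1$ is finite flat of degree $\Theta$ and $M$ is flat over $\P^1_T$ and fiberwise pure of dimension one, $V$ is locally free of rank $\Theta r$; the trivialization $\iota$ of $\det V=\det\pi_*M$ is the one recorded in $M$; and the residue decomposition of $V_i:=V|_{p_i\times T}$ is read off at the points $p_i\in Z$, where $f$ has distinct roots $\lambda_{i,j}$, so $S_f$ is étale over $p_i$ and $M$ is locally free of rank $c_ir$ over each sheet — yielding exactly the eigenspace decomposition required in Definition~\ref{moduliproblem}. Stability of $V$ against $\phi$-invariant subbundles is likewise inherited, since such subbundles correspond precisely to $\O_{S_f}$-submodules of $M$. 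Hence $\Phi$ is well defined.

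\textbf{Full faithfulness.} A morphism in $\H_{r,d}^{i}(T)$ from $\Phi(M)$ to $\Phi(M')$ is an $\O_{\P^1_T}$-linear isomorphism $g\colon\pi_*M\to\pi_*M'$ intertwining $\phi_M$ with $\phi_{M'}$ and compatible with the trivializations. By the equivalence recorded in Construction~\ref{totalspace}, $\O$-linear maps intertwining $(\pi_*M,\phi_M)$ and $(\pi_*M',\phi_{M'})$ are the same thing as morphisms of $|\Omega'|$-modules $M\to M'$; and since $M,M'$ are both pushed forward along the closed immersion $S_f\inj|\Omega'|$, and pushforward along a closed immersion is fully faithful on quasicoherent sheaves, these are in turn the same as $\O_{S_f}$-linear maps $M\to M'$. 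This bijection matches isomorphisms with isomorphisms (the inverse of an intertwiner is an intertwiner), and the compatibility with $\iota$ is literally the same condition on both sides because $\det\pi_*M$ is, by definition, the object carrying $\iota$. Thus $\Phi$ is fully faithful. The only point needing care here is that Construction~\ref{totalspace} and closed-immersion pushforward commute with the base changes $T'\to T$, which they do.

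\textbf{The characteristic polynomial.} It remains to show $P_{\phi_M}=f^r$. Both sides are monic of degree $\Theta r$ whose $i$-th coefficient is a global section of $(\Omega')^{\ten i}$ on $\P^1_T$, so it suffices to prove equality on a schematically dense open $U\sub\P^1_T$. Because $f$ has distinct roots at each $p_i$, the cover $S_f\to\P^1$ is étale over a nonempty — hence fiberwise dense — open, and over its preimage $M$ is locally free of rank $r$ on each of the $\Theta$ sheets; the locus where this holds and which moreover meets every associated point of $\P^1_T$ (these lie over associated points of $T$, at the generic points of the smooth fibers $\P^1$, where $S_f$ is reduced) furnishes such a $U$. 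Over $U$, $\pi_*M$ is the direct sum of the $\Theta$ rank-$r$ sheets and $\phi_M$ acts on the sheet over the root $y_j$ as multiplication by $y_j$, so $P_{\phi_M}\big|_U=\prod_j(y-y_j)^r=f^r\big|_U$, whence $P_{\phi_M}=f^r$. (Compare Lemma~\ref{cayleyhamilton}: since $f$ cuts out $S_f$ and $M$ is an $\O_{S_f}$-module one has $f(\phi_M)=0$ directly, giving $P_{\phi_M}\mid f^{\Theta r}$; the density argument is what pins down the exact exponent $r$.)

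\textbf{Main obstacle.} The one genuinely delicate step is the characteristic-polynomial computation over a non-reduced base $T$: one must check that the open locus on which the fiberwise computation is valid is \emph{schematically} dense in $\P^1_T$, and this is exactly where flatness of $M$ over $\P^1_T$, fiberwise purity of dimension one, and the reducedness of $S_f$ over the generic points of the $\P^1$-fibers are all used. Full faithfulness, by contrast, is a formal consequence of Construction~\ref{totalspace} together with the full faithfulness of pushforward along the closed immersion $S_f\inj|\Omega'|$.
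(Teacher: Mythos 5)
Your proposal is correct and follows essentially the same route as the paper: full faithfulness via the equivalence of Construction~\ref{totalspace} combined with pushforward along the closed immersion $S_f\inj|\Omega'|$, and the computation $P_{\phi_M}=f^r$ by reducing to a schematically dense open meeting the associated points of $\P^1_T$ (which lie over associated points of $T$), where $M$ is generically locally free on the generically smooth cover. The paper phrases the generic computation as $M\cong\O_{S_f}^{\oplus r}$ having characteristic polynomial $f^r$ rather than splitting into sheets, but this is the same calculation.
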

\begin{proof}
  First we address the calculation of the characteristic polynomial of $(V,\phi) = \pi_*M$.
  The data of the characteristic polynomial consists of some sections of line bundles on $\P^1_T$.
  It is therefore enough to verify that the claimed coefficients are correct on a neighborhood of each of the associated points of $P^1_T$.
  These points are in bijection with the associated points of $T$.
  Let $t\in T$ be an associated point and let $\tau$ be $\Spec$ of the stalk at $t$.
  Over $\P^1_\tau$, the cover $S_f$ is generically smooth and so the bundle $M$ is generically a vector bundle.
  In fact, generically on $\P^1_\tau$ we can assume that $M$ is actually a trivial vector bundle on $S_f$.
  Regarded as a Higgs bundle, the structure sheaf of $S_f$ has characteristic polynomial $f$,
  so the characteristic polynomial of a direct sum of $r$ copies of $\O_{S_f}$ is $f^r$.
  As the associated points of $\P^1_T$ are all generic points of $\P^1_t$ for $t$ an associated point of $T$, we are done.

  We have already seen that the isomorphisms between two Higgs bundles are the same as the isomorphisms between their corresponding modules on $|\Omega'|$.
  This is the same as giving an isomorphism between two vector bundles on a fixed $S_f$, so the described functor is faithful and full.
\end{proof}

In cases III and IV, one can still define stacks $\E_{r,d}^{iii}$ and $\E_{r,d}^{iv}$, but it is not enough to consider a spectral curve $S_f$ in $|\Omega'|$.
For instance, consider the setup for case III and take a spectral curve $S$ in $|\Omega'|$ 
and which passes through the ten specified points in the three designated fibers.
If we consider the Higgs bundle corresponding to the structure sheaf of this spectral curve, the residue endomoprhism at $p_1$ will not be diagonalizable,
but rather will consist of two Jordan blocks of size two with eigenvalues $\lambda_{1,i}$.
For this reason we must consider extra structure coming from our diagonalizable residue endomorphisms.

\subsection{Blowups}
We have seen that a vector bundle with a twisted endomorphism on $X$ can be viewed instead as a module on the total space of the twisting line bundle $L$,
but in the case of $\H_{r,d}$ there is even more structure.
First we need to set up some new (relative) surfaces over $T$. 
For $p_i\in Z$ and $\lambda_{i,j}$ one of the designated eigenvalues at $p_i$, let $e_{i,j}$ be the point of $|\Omega'|$ in the fiber over $p_i$ with residue $\lambda_{i,j}$.
Let $\bar{\sigma}:\bar{B}\ra|\Omega'|$ be the blowup of $|\Omega'|$ at all of the $e_{i,j}$.
We will write $E_{i,j}$ for the exceptional divisor over $e_{i,j}$.
$\bar{B}$ has an easy to describe open subset consisting of the compliment of the strict transform the fibers in $|\Omega'|$ over points of $Z$.
This open subset of $\bar{B}$ call $\sigma:B\ra |\Omega'|$.
Note that these constructions are constant in $T$:
we may first construct $B\hookrightarrow \bar{B}\ra \P^1_{\C}$ and then pull this back from $\Spec \C$ to $T$.

\begin{lemma}
  Let $(V,\phi)$ be a log Higgs bundle on $\P^1_T$ with eigenvalue $\lambda$ of multiplicity $s$ at $p\times T$. 
  Further assume that the residue endomorphism at $p$ has trivial $\lambda$ Jordan blocks, i.e. the $\lambda$ part of the residue endomorphism is scalar. 
  Then the characteristic polynomial $P_\phi$ has multiplicity $s$ at the point $(p,\lambda)$ of $|\Omega'|$.
  \label{multiplicity}
\end{lemma}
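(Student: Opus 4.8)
The plan is to reduce to a local computation at the point $p\times T$ and analyze the coefficients of the characteristic polynomial near the point $(p,\lambda)\in|\Omega'|$. First I would work locally on $T$, so assume $T=\Spec A$ and pick a local coordinate $x$ on $\P^1$ at $p$, so that $Z$ is locally cut out by $x$ and $\frac{dx}{x}$ is a local trivializing section of $\Omega'$ with residue $1$ at $p$. Writing $\psi=\frac{\phi}{dx/x}\in\End(V)$ for the induced honest endomorphism near $p$, the residue of $\phi$ at $p$ is exactly $\psi|_{x=0}$, which by hypothesis has the block form $\lambda I_s\oplus N$ on $V|_{p\times T}=V_\lambda\oplus V'$, where $N$ is an endomorphism of $V'$ not having $\lambda$ as an eigenvalue (i.e.\ $\psi|_{x=0}-\lambda$ is invertible on $V'$). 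Since $y$, the tautological section of $\pi^*\Omega'$, corresponds in these coordinates to $y=\frac{dx}{x}\cdot(\text{unit})$, the point $(p,\lambda)$ is cut out locally in $|\Omega'|$ by the ideal $(x,\,t-\lambda)$ where $t$ is the fiber coordinate dual to $\frac{dx}{x}$; and $P_\phi$, as a function on $|\Omega'|$, is $\det(tI-\psi)$ up to a unit.

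The key step is then: show that $\det(tI-\psi)$, as an element of $A[x][t]$ localized near $x=0,\ t=\lambda$, lies in the ideal $(x,t-\lambda)^s$ but not in $(x,t-\lambda)^{s+1}$. For the containment, decompose $V$ near $p$ (after shrinking $T$ if necessary) into a $\psi$-stable splitting lifting $V_\lambda\oplus V'$: this is possible because $\psi|_{x=0}$ has $\lambda$-generalized eigenspace exactly $V_\lambda$ and the splitting of a module by a polynomial idempotent in $\psi$ deforms over the nilpotent/local base — concretely one can use that $(\psi-\lambda)$ is invertible on the $V'$ summand at $x=0$ hence invertible in a neighborhood, giving a Fitting-type decomposition $V=V_\lambda^{loc}\oplus V'^{loc}$ over $\Spec A[x]_{(x)}$. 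On $V'^{loc}$ the factor $\det(tI-\psi|_{V'^{loc}})$ is a unit at $(x,t-\lambda)$, so it does not affect the multiplicity. On $V_\lambda^{loc}$, which has rank $s$, we have $\psi|_{x=0}=\lambda I_s$, so $\psi-\lambda$ is $x\cdot(\text{some matrix over }A[x])$; hence $tI_s-\psi = (t-\lambda)I_s - (\psi-\lambda)$ has every entry in the ideal $(x,t-\lambda)$, and its $s\times s$ determinant therefore lies in $(x,t-\lambda)^s$. This gives multiplicity $\ge s$.

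For the reverse inequality — that the multiplicity is not strictly larger — I would compute the degree-$s$ part of $\det((t-\lambda)I_s-(\psi-\lambda))$ in the associated graded ring $\C[\bar x,\overline{t-\lambda}]$ of the local ring at $(p,\lambda)$: it equals $\det(\overline{t-\lambda}\cdot I_s - \overline{\psi-\lambda})$ where $\overline{\psi-\lambda}$ is the class of $(\psi-\lambda)/x$ times $\bar x$, i.e.\ the leading term is a homogeneous polynomial of degree exactly $s$ which is not identically zero (its $\overline{t-\lambda}^{\,s}$ coefficient is $1$). Since $(p,\lambda)$ is a regular point of the surface $|\Omega'|$ (it lies on the smooth total space away from nothing problematic), the associated graded is a polynomial ring and a nonzero homogeneous degree-$s$ element witnesses that $P_\phi\notin\m_{(p,\lambda)}^{s+1}$. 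Combining the two bounds gives multiplicity exactly $s$.

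The main obstacle I expect is making the $\psi$-stable decomposition $V=V_\lambda^{loc}\oplus V'^{loc}$ rigorous over a possibly non-reduced base $T$: one cannot simply diagonalize, and one must check that the idempotent built from $(\psi-\lambda)$ being invertible on the complementary summand actually exists as a global endomorphism over $\Spec A[x]_{(x)}$ and is compatible with base change in $T$. This is where the hypothesis "trivial $\lambda$ Jordan blocks" is used essentially — without it $\psi-\lambda$ would only be nilpotent, not zero, on the $\lambda$-part at $x=0$, and the determinant of $tI_s-\psi|_{V_\lambda^{loc}}$ would generically have lower order of vanishing. Once the decomposition is in place, the rest is the elementary determinant estimate above.
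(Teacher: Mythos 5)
Your overall strategy is the same as the paper's: split $V$ near $p$ into the $\lambda$-part and its complement, observe that $\psi-\lambda$ is divisible by $x$ on the rank-$s$ summand while $tI-\psi$ is invertible at $(x,t-\lambda)$ on the other summand, and read off the multiplicity of $\det(tI-\psi)$ from the two factors. Your entrywise estimate (every entry of $tI_s-\psi$ on the $\lambda$-block lies in $(x,t-\lambda)$, hence the $s\times s$ determinant lies in $(x,t-\lambda)^s$) is in fact cleaner than the paper's computation of the coefficients $p_i$ as traces of wedge powers of $\phi$, and your upper bound (the $\overline{t-\lambda}^{\,s}$-coefficient of the leading form is the unit $\det(\lambda I-N)$) is equivalent to the paper's restriction to the fiber over $p$; note that this part only uses the given decomposition of $V|_{p}$, so it is independent of the issue below.

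The gap is exactly where you suspected it, and it is genuine: the $\psi$-stable splitting $V=V_\lambda^{loc}\oplus V'^{loc}$ does not in general exist over $\Spec A[x]_{(x)}$, because that local ring is not henselian, so neither idempotents of $A[x]_{(x)}[\psi]$ nor coprime factorizations of the characteristic polynomial lift from the closed fiber $x=0$. For instance, with $A=\C$ the rank-two operator $\psi(e_1)=e_2$, $\psi(e_2)=(1+x)e_1$ is diagonalizable at $x=0$ with the distinct eigenvalues $\pm 1$, yet its characteristic polynomial $t^2-(1+x)$ is irreducible over $\C[x]_{(x)}$ and no eigen-splitting exists there. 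The repair is precisely the paper's move: restrict to the nilpotent thickening $x^s=0$. Over $A[x]/(x^s)$ the ideal $(x)$ is nilpotent, so the projector onto $V_\lambda$ lifts from the fiber and the $\psi$-stable splitting exists; and since $(x^s)\subseteq (x,t-\lambda)^s$, checking $P_\phi\in(x,t-\lambda)^s$ modulo $x^s$ establishes the lower bound. (Beware that this truncation would not suffice for the upper bound, since $x^s\notin(x,t-\lambda)^{s+1}$ --- but as noted your upper-bound argument lives entirely on the fiber $x=0$.) With that one substitution your proof goes through.
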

\begin{proof}
  By subtracting a scalar endomorphism from $\phi$, we may assume that $\lambda = 0$.
  Then the ideal defining $(p,\lambda)$ is the one generated by $x$ and $y$, where $x$ is a function on $\P^1$ vanishing at $p$ and $y$ is a trivialization of $\Omega'$.
  The characteristic polynomial, recall, is of the form:
  \[P_\phi=\sum_{i=0}^{n} (-1)^iy^{n-i}p_i\]
  We wish to show that $P_\phi\in (x,y)^s$.
  The higher terms of $P_\phi$ are in $(x,y)^s$, so it remains to show that $p_{2r-s+j}\in (x,y)^j$ for $0<j\leq s$.
  The invariant $p_{2r-s+j}$ can be calculated as the trace of $\wedge^{2r-s+j}\phi$ acting on $\bigwedge^{2r-s+j} V$.
  Now restrict $(V,\phi)$ to the closed subscheme defined by $x^s$.
  Here the module $M_\phi$ splits as $M_\lambda \oplus M_\sim$ where $M_\lambda$ is the part supported on $(p,\lambda)$ and $M_\sim$ is everything else.
  There is a corresponding decomposition $(V,\phi) = (V_\lambda,\phi_\lambda)\oplus (V_\sim,\phi_\sim)$ in which $V_\lambda$ is a rank $s$ bundle and $V_\sim$ is rank $2r-s$.
  The operator $\phi$ is divisible by $x$ when restricted to $V_\lambda$.
  On wedges, this direct sum behaves like:
  \[\bigwedge^{2r-s+j}V = \bigwedge^s V_\lambda\otimes \bigwedge^{2r-2s+j}V_\sim \oplus\cdots \bigwedge^j V_\lambda \otimes \bigwedge^{2r-s}V_\sim\]
  The trace of $\phi$ acting on the term $\bigwedge^{j+l} V_\lambda \otimes \bigwedge^{2r-s-l}V_\sim$
  is the product of trace of $\phi_\lambda$ acting on $\bigwedge^{j+l} V_\lambda$ and the trace of $\phi_\sim$ acting on $\bigwedge^{2r-s-l}V_\sim$.
  Since the operator $\phi_\lambda$ is divisible by $x$, the trace on $\bigwedge^{j+l} V_\lambda$ is divisible by $x^{j+l}$.
  Hence the overall trace of $\phi$ acting on $\bigwedge^{2r-s+j}V$ is divisible by $x^j$.

  To demonstrate that $P_\phi$ has multiplicity no greater than $s$ at $(p,\lambda)$ we only need consider $(V,\phi)$ restricted to the fiber over $p$.
  Here $P_\phi$ is the spectrum of an operator with $\lambda$ as an eigenvalue of multiplicity exactly $s$, so $P_\phi$ cannot have multiplicity greater than $s$.
\end{proof}

\begin{cor}
  Let $(V,\phi)$ be a log Higgs bundle over a field $k$ and suppose the residue endomorphism at $p$ is diagonalizable.
  Denote by $\sigma:\bar{B}\ra |\Omega'|$ the blowup of $|\Omega'|$ at the eigenvalues of the residue endomorphism in the fiber over $p$.
  If $S'_\phi$ is the strict transform of $S_\phi$ to $\bar{B}$ and $F'$ is the strict transform of $F$, the fiber over $p$,
  then $S'$ does not meet $F'$.
  \label{stricttransform}
\end{cor}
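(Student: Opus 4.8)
The plan is to reduce the statement to a computation of local intersection multiplicities at the blown-up points and then invoke the standard formula describing how intersection numbers change under blowing up a point of a smooth surface. Disjointness of $S'_\phi$ and $F'$ can be checked after base change and is in any case local, so I may assume $k=\bar k$. Then the eigenvalues of $\Res_p(\phi)$ are distinct $k$-points $\lambda_1,\dots,\lambda_m$ of the fiber $F\cong\A^1$, of multiplicities $s_1,\dots,s_m$, and the points $e_j:=(p,\lambda_j)$ are exactly those blown up by $\bar\sigma$. Away from the exceptional curves $E_j$ there is nothing to prove, since there $\bar\sigma$ is an isomorphism and $S_\phi\cap F=\{e_1,\dots,e_m\}$; so everything comes down to whether $S'_\phi$ and $F'$ can meet on some $E_j$.

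I would pin this down with two facts at $e_j$. First, $|\Omega'|$ is a smooth surface and $F$ is a smooth fiber, so $\mathrm{mult}_{e_j}(F)=1$, and restricting $P_\phi$ to $F$ (after trivializing $\pi^*\Omega'$ there) gives the characteristic polynomial $\prod_l(y-\lambda_l)^{s_l}$ of $\Res_p(\phi)$; hence $S_\phi\cap F$ is the divisor $\sum_l s_l[e_l]$ on $F$ and the local intersection number is $I_{e_j}(S_\phi,F)=s_j$. Second, since $\Res_p(\phi)$ is diagonalizable its $\lambda_j$-Jordan blocks are trivial, so Lemma \ref{multiplicity} applies and gives $\mathrm{mult}_{e_j}(S_\phi)=s_j$. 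Now the blowup formula on the smooth surface $|\Omega'|$ reads
\[
I_{e_j}(S_\phi,F)\;=\;\mathrm{mult}_{e_j}(S_\phi)\cdot\mathrm{mult}_{e_j}(F)\;+\;\sum_{q\in E_j}I_q(S'_\phi,F'),
\]
and the two facts make the left side and the first term on the right both equal $s_j$, so the sum of non-negative terms on the right vanishes; thus $S'_\phi$ and $F'$ are disjoint over $e_j$. Together with the remark about the complement of the $E_j$, this yields $S'_\phi\cap F'=\varnothing$.

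The only real content is the equality $I_{e_j}(S_\phi,F)=\mathrm{mult}_{e_j}(S_\phi)$, i.e. that the fiber $F$ is transverse to the tangent cone of $S_\phi$ at the eigenvalue point, and this is precisely what Lemma \ref{multiplicity} provides — it is the upper bound $\mathrm{mult}_{e_j}(S_\phi)\le s_j$ that is needed, the reverse inequality being automatic from $I_{e_j}(S_\phi,F)\ge\mathrm{mult}_{e_j}(S_\phi)\cdot\mathrm{mult}_{e_j}(F)$. Equivalently, and perhaps more transparently, one writes $\bar\sigma$ in the chart where $x$ cuts out $F$, sets $E_j=\{t=0\}$ with $x=st$ and $y-\lambda_j=t$, uses Lemma \ref{multiplicity} to factor $P_\phi\circ\bar\sigma=t^{s_j}h(s,t)$ with $t\nmid h$, so that $S'_\phi=\{h=0\}$ and $F'=\{s=0\}$; then $h(0,t)=\prod_{l\ne j}(\lambda_j+t-\lambda_l)^{s_l}$, which is nonzero at $t=0$ exactly because the eigenvalues are distinct, so $S'_\phi$ misses $F'$ along $E_j$ (and $F'$ does not enter the other chart). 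The only technical nuisance is that $|\Omega'|$ is not proper, so all intersection numbers above must be read as local intersection multiplicities rather than as a global pairing.
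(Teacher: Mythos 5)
Your proof is correct and rests on the same two ingredients as the paper's: Lemma \ref{multiplicity} pinning $\mathrm{mult}_{e_j}(S_\phi)=s_j$, and the blowup formula for intersection numbers forcing the residual intersection $S'_\phi\cdot F'$ to vanish. The only difference is packaging — you work with local intersection multiplicities at each $e_j$ (and add an explicit coordinate check in the blowup chart), whereas the paper runs the same computation globally via $\sigma^*S\cdot\sigma^*F=S\cdot F=\Theta r$ and $\sum_\lambda s_\lambda=\Theta r$; your local phrasing has the mild advantage of sidestepping the non-properness of $|\Omega'|$, which you rightly flag.
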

\begin{proof}
  Let $C\sub k$ be the set of eigenvalues of the residue endomorphism, $s_\lambda$ being the multiplicity of $\lambda$ as an eigenvalue for $\lambda\in C$.
  By the previous lemma, the multiplicity of $S_\phi$ at $(p,\lambda)$ is $s_\lambda$.
  These multiplicities may be used to compute that 
  \[\sigma^* S = S' + \sum_{\lambda\in C} s_\lambda E_\lambda\]
  Since $S.F = \Theta r$, $\sigma^* S .\sigma^* F = \Theta r$ and we calculate:
  \begin{align*}
  \Theta r &=(S' + \sum_{\lambda\in C} s_\lambda E_\lambda).(F' + \sum_{\lambda\in C} E_\lambda)\\
  &= (S' + \sum_{\lambda\in C} s_\lambda E_\lambda).F'\\
  &=S'.F' +\sum_{\lambda\in C} s_\lambda E_\lambda.F'\\
  &=S'.F' + \Theta r
  \end{align*}
  And so we conclude $S'.F'=0$.
\end{proof}

Assume now that $(V,\phi)$ is a $T$ family from $\H_{r,d}$. Because of the above corollary, the strict transform of $S_\phi$ to $\bar{B}$ is contained in $B$.
The open subset $B$ is affine over $|B|$ so it is easy to give $M_\phi$ the structure of a module on $\bar{B}$.

\begin{prop}
  As before, let $(V,\phi)$ be a log Higgs bundle on $\P^1_T$ with scalar $\lambda$ part of its residue endomorphism at $p$, of dimension $s$.
  Locally on $|\Omega'|$, the module $M_\phi$ may be considered a module on the blowup of $|\Omega'|$ at the point $(p,\lambda)$; we will call this new module $N_\phi$.
  In a neighborhood of the point $(p,\lambda)$, $N_\phi$ is supported on the open subset of the blowup away from the strict transform of the fiber of $|\Omega'|$ over $p$.
  In this neighborhood, the module $N_\phi$ is supported on the strict transform of $S_\phi$.

  The converse is also true: 
  Assume that $S'\sub B$ is a Cartier divisor finite over $\P^1_T$ and that $M$ is a coherent $S'$ module which is a vector bundle over $\P^1$.
  Then the corresponding Higgs bundle on $\P^1_T$ has diagonalizable fiber endomorphisms at $p$.
  \label{stricttransformmodule}
\end{prop}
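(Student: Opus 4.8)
The plan is to treat the two halves of the proposition separately, since the first is essentially a local repackaging of Corollary~\ref{stricttransform} and Lemma~\ref{multiplicity}, while the second is the real content. For the forward direction, I would work Zariski-locally on $|\Omega'|$ near $(p,\lambda)$, so that $\Omega'$ is trivialized by a section $y$ and $p$ is cut out by a function $x$ on $\P^1$. After subtracting a scalar we may assume $\lambda=0$, so the point is $(x,y)$. Lemma~\ref{multiplicity} tells us $P_\phi$ has multiplicity exactly $s$ at $(p,\lambda)$; since $B\hookrightarrow\bar B\to|\Omega'|$ is the blowup at $(p,\lambda)$ with $B$ the complement of the strict transform of the fiber $F$, the affine morphism $B\to|\Omega'|$ makes it easy to push $M_\phi$ forward and view it as a module over $\mathcal O_B$ near the exceptional locus — concretely, one of the two standard affine charts of the blowup has coordinate ring obtained by adjoining $y/x$, and the Cayley--Hamilton relation $P_\phi=0$ divided by $x^s$ (which is legitimate by the multiplicity estimate) exhibits $M_\phi$ as annihilated by the proper transform of $P_\phi$. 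Corollary~\ref{stricttransform} (applied fiberwise, but the divisor $B$ is defined over $\C$ and pulled back, so the statement is stable) then gives that this support misses the strict transform $F'$, which is exactly the assertion that $N_\phi$ lives on the open subset $B$ and is supported on $S'_\phi$ there.

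For the converse, suppose $S'\subset B$ is a Cartier divisor finite over $\P^1_T$ and $M$ a coherent $S'$-module which is a vector bundle over $\P^1_T$; set $(V,\phi)=\pi_*M$ via Construction~\ref{totalspace} applied to the composite $S'\hookrightarrow B\xrightarrow{\sigma}|\Omega'|\to\P^1_T$. I want to show the residue endomorphism at $p$ is diagonalizable. The key local fact is: in the affine chart of the blowup containing $E_{p,\lambda}\setminus F'$, the exceptional divisor $E_{p,\lambda}$ maps to $(p,\lambda)$, the strict transform $F'$ of the fiber has been deleted, and the two boundary components $E_{p,\lambda}$ and $F'$ are disjoint in $B$. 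So restricting $M$ to the fiber $\P^1_{\kappa}$ over $p$ (for a point of $T$), $S'\cap F$ is a finite subscheme of $E_{p,\lambda}$-minus-$F'$ union the other exceptional divisors $E_{p,\lambda'}$, $\lambda'\neq\lambda$; crucially, over $p$ the preimage of $(p,\lambda)$ in $B$ is precisely $E_{p,\lambda}$ (an honest $\P^1$, minus one point), and $S'$ being Cartier and finite over $\P^1$ means $S'\cap E_{p,\lambda}$ is a finite set of \emph{distinct} reduced points in this $\P^1$ — because the fiber of $S'\to\P^1$ over $p$ is length $\Theta r$ but the pieces near each $\lambda$ have multiplicity controlled by $s_\lambda$ with no room for the scheme-theoretic fiber over $p$ in $|\Omega'|$ to be fat at $(p,\lambda)$, forcing the Higgs field's $\lambda$-block to be semisimple. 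Pushing down, $V|_p=\bigoplus_\lambda(\pi_*M)_\lambda$ splits according to which $E_{p,\lambda}$ (equivalently which eigenvalue) the support lies over, and on each summand $\Res_p\phi$ acts by $\lambda$ because that whole component of $B$ maps to the single point $(p,\lambda)$; hence $\Res_p\phi$ is diagonalizable.

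I expect the main obstacle to be the second half: specifically, making precise why a Cartier divisor $S'$ in $B$, finite over $\P^1$, must meet each exceptional $E_{p,\lambda}$ in a way that forces the residue to be \emph{scalar} on the $\lambda$-part rather than merely block-triangular — i.e.\ ruling out nilpotent contributions to $\Res_p\phi$. The clean way to see this is that the composite $B\to|\Omega'|$ collapses all of $E_{p,\lambda}$ to $(p,\lambda)$, so the pushforward of any $\mathcal O_{E_{p,\lambda}}$-module to $|\Omega'|$ is set-theoretically and scheme-theoretically supported at $(p,\lambda)$ with the tautological section $y-\lambda$ acting as zero (not just nilpotently), since $y-\lambda$ vanishes identically on $E_{p,\lambda}$ as the pullback of a function vanishing at $(p,\lambda)$ — and $E_{p,\lambda}$ is reduced, so ``vanishes on the reduced subscheme'' plus ``$S'\cap E_{p,\lambda}$ reduced of the right length'' upgrades to ``$\Res_p\phi-\lambda$ kills the corresponding summand of $V|_p$.'' Once that local computation is in hand, assembling the global direct-sum decomposition of $V|_p$ and matching it to Definition~\ref{moduliproblem} is routine bookkeeping, and flatness of $M$ over $\P^1_T$ is what guarantees the decomposition persists in families.
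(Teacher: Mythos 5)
The forward direction of your argument has a genuine gap. To regard $M_\phi$ as a module on the blowup chart you must define an action of the new coordinate $\tau=(y-\lambda)/x$ on $M_\phi$, and nothing about the characteristic polynomial does this for you: asserting that the proper transform $Q_\phi$ (a polynomial in $\tau$) annihilates $M_\phi$ already presupposes that $\tau$ acts, so ``dividing the Cayley--Hamilton relation by $x^s$'' is circular. Worse, the multiplicity estimate alone cannot suffice. Take $V=\O^2$ near $p=\{x=0\}$ with $\phi$ a single nilpotent $2\times 2$ Jordan block times $\frac{dx}{x}$: the characteristic polynomial $y^2$ still has multiplicity $2$ at $(p,0)$ and its proper transform misses $F'$, yet the residue is not scalar and the module does not lift to the blowup (the operator $y/x$ would have to send the second basis vector to $e_1/x\notin M_\phi$). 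The paper instead uses the scalar hypothesis directly at the level of the module: localizing so that $S_\phi$ meets the fiber over $p$ only at $\lambda$, the restriction of $M_\phi$ to that fiber is supported at $(p,\lambda)$ with no nilpotent thickening in the $y$-direction, so $(\phi-\lambda\,\id)$ carries $M_\phi$ into $xM_\phi$; and because $V$ is a vector bundle over $\P^1_T$, multiplication by $x$ is injective on $M_\phi$, so the action of $\tau$ is uniquely determined. That same injectivity is what upgrades ``$P_\phi$ annihilates $M_\phi$'' to ``$Q_\phi$ annihilates $M_\phi$.'' You need to supply this step; it is the heart of the forward direction.

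Your converse does land on the correct mechanism, which is the paper's: on $B$ one has $y-\lambda=x\tau$, so $y-\lambda$ vanishes on the scheme-theoretic fiber $V(x)\cap B$ over $p$, hence $\Res_p\phi-\lambda$ kills the summand of $V|_p$ supported over $E_{p,\lambda}$, and the direct sum decomposition of $V|_p$ comes from the disjointness of the exceptional divisors inside $B$. However, the intermediate claim that $S'$ being Cartier and finite over $\P^1$ forces $S'\cap E_{p,\lambda}$ to be a reduced set of distinct points is both unjustified and false in general: in the chart with coordinates $(x,\tau)$ the divisor $S'=V(\tau^2-x)$ is Cartier, finite of degree two over $\P^1$, and tangent to the exceptional divisor, so the intersection is a length-two non-reduced point (and the corresponding residue is still scalar, as it must be). Fortunately that claim is not load-bearing, since the vanishing of $y-\lambda$ on $V(x)\cap B$ requires nothing about the reducedness of $S'\cap E_{p,\lambda}$; I would simply delete it.
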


\begin{proof}
  The open subset $B$ of the full blowup at $(p,\lambda)$ is affine over $|\Omega'|$ and is generated by a function $\tau$ which satisfies $x\tau = y-\lambda$.
  Here we assume we have trivialized $\Omega'$ by $y$ in such a way that $x$ and $y-\lambda$ define $(p,\lambda)$.
  Pick a principal open subset $U$ of $|\Omega'|$ containing $(p,\lambda)$ and such that $S_\phi$ only intersects the fiber over $p$ only at $\lambda$.
  The restriction of $M_\phi|_U$ to the fiber $\pi\i(p)$ is a module supported on the subscheme $(p,\lambda)$ only and not a nilpotent neighborhood,
  so the operator $\phi-\lambda\id$ sends $M_\phi|_U$ into $x M_\phi|_U$.
  The module $M_\phi$ is torsion free over $\P^1_T$, so multiplication by $x$ is an injection from $M_\phi$ into itself.
  Since localization is an exact functor, the module $M_\phi|_U$ is also torsion free for $x$,
  so there is a unique endomorphism of $M_\phi$ deserving to be called $\frac{y-\lambda}{x}$.
  This shows that $M_\phi$ may be considered a module on this open subscheme of the blowup; this module will be called $N_\phi$.

  The strict transform of $S_\phi$ may be calculated directly from its defining equation $P_\phi$.
  It is convenient again to assume that our eigenvalue $\lambda = 0$ by a translation of the bundle $|\Omega'|$.
  After this translation and by taking an open subset $U$ as before, we can write
  \[P_\phi = \sum_{i=0}^n (-1)^i y^{n-i} p_i(x) = \sum_{i=0}{n} (-1)^i (\frac{y}{x})^{n-i}x^{n-i} p_i(x)\]
  The strict transform of $S_\phi$ is the vanishing of the function $P_\phi$ with as many factors of $x$ removed as possible.
  Call this result $Q_\phi$, so $x^e Q_\phi = P_\phi$ and $x\nmid Q_\phi$.
  We know that $P_\phi$ annihilates $M_\phi|_U$, and that $x$ is an injection on $M_\phi$, so $Q_\phi$ must also annihilate $M_\phi|_U$.

  To see the converse, consider the fact that, near the point $(p,\lambda)$, the endomorphism $\phi-\lambda$ of $M$ may be written as a multiple of $x$.
  This shows that the endomorphism $\phi$ induced on $V|_p = \pi_*\sigma_*N_|p$ is diagonalizable.
  Within this fiber, the surface $B$ splits into a few components: one exceptional divisor for each eigenvalue at $p$.
  This implies that the $T$-vector bundle $V|_p$ splits up as a sum $\bigoplus V_\lambda$, which are each vector bundles themselves as summands of a vector bundle.
\end{proof}

Return to the case when $(V,\phi)$ is a $T$ family from $\H_{r,d}$.
For any given point $p\in Z$, we can pick a neighborhood $U\ni p$ of $\P^1_T$ and cover the fiber $\pi\i U$ with open sets of the form required by the previous lemma.
This lets us define $N_\phi$ as a module on $B$. 
The strict transform of the spectrum, $S'_\phi$, supports the module $N_\phi$.
Since $S'_\phi$ is a closed subset of $\bar{B}$ not intersecting the strict transforms of the fibers over the points of $Z$,
we may regard $N_\phi$ to be a module on $B$ or on $\bar{B}$.

\section{The Class of the Spectral Divisor}

Let $k$ be a field containing $\C$ and let $(V,\phi)$ be a point of $\H_{r,d}$. We will be able to use some intersection theory to study the behavior of $N_\phi$ on $\bar{B}$.
We first must build a projective surface over $\P^1_k$. 
The total space of $\Omega'$ is an open subset of $\P(\O \oplus \Omega')$ and so we blow up the points $e_{i,j}$ in this latter surface and call the result $W$.
Note that $\bar{B}$ is an open subset of $W$ but $S'$, the strict transform of the spectrum, is still closed in $W$ and hence $N_\phi$ is a coherent module on $W$.
The divisor class group of $W$ is the free abelian group generated by
\begin{itemize}
  \item $F$ a fiber of the map to $\P^1_k$
  \item $\infty$ the `section at infinity' $\P(\Omega')\sub \P(\O\oplus \Omega')$
  \item $E_{i,j}$ the exceptional loci of the blowup
\end{itemize}
We also have the zero section of $|\Omega'|$, which is $[0]=\infty +2F$ in case I and $[0]=\infty + F$ in cases II, III, and IV.
Basic calculations show that $F.\infty=F.0$, $F^2=0$,
and $[0]^2=-\infty^2 = 2$ in case I and $1$ in cases II, III, and IV.
The $E_{i,j}$ are orthogonal to everything but themselves, and of course the square to $-1$.

We have designated $S'\sub W$ as the strict transform of the spectral curve $S$. 
Both $S$ and $S'$ are presented as Cartier divisors in $W$ and  we wish to determine the class of $S'$.
The pairing $S'.\infty = 0$ since $S\sub \bar{B}$, and $S'.F = \Theta r$ since $S'$ restricted to a fiber is the spectrum of a $\Theta r$ dimensional operator.
Because of Corollary \ref{stricttransform} we know that $S'$ should pair with the exceptional divisor at $(p_i, \lambda_{i,j})$ to $c_{\lambda_{i,j}}r$.
We now know enough about $S'$ to verify:
\[ [S]=\Theta r [0] - \sum_{p_i\in Z} \sum_j c_{\lambda_{i,j}}r E_{i,j}\]

This divisor is $r$ times a divisor we call $\Delta$, i.e. equal to the divisor $S'$ in the case $r=1$.
Since the divisor $S$ does not depend on the Higgs field at all or on the parameter $d$, 
we study the (complete) linear series of $\Delta$ and its multiples to understand which spectral curves may arise.
Before studying this linear series, it is convenient to complete the definition of $\E_{r,d}$:
\begin{defn}
  Let $T$ be a test scheme and let $E\sub W\times T$ be an effective Cartier divisor in the class $\Delta$.
  Assume further that $E$ is contained in the open subset $B\times T$.
  Let $M$ be a coherent sheaf on $E$, which is flat of rank $\Theta r$ over $\P^1_T$, and which is pure of dimension of one on fibers over points of $T$.
  Further require that $M$ is length $c_{i,j}r$ when restricted to $E_{i,j}$.
  We require that $V=\pi_* M$ is a degree $d$ vector bundle on $\P^1$.
  Giving an isomorphism $\iota$ from the determinant of $V$ to the line bundle $\O(d)$ 
  upgrades an object of $\E^\circ$ to an object of $\E$, as before.
  Isomorphisms are just isomorphisms of coherent sheaves on a fixed Cartier divisor $E$ which commute with the map $\iota$ if it is given.
  \label{substackdefn2}
\end{defn}

This is now a complete description of the stack $\E$ in all cases.
The same argument as in the proof of Lemma \ref{substack} shows that the characteristic polynomial of the Higgs bundle is the $r$th power of the defining equation for $\sigma_* E$.
In cases I and II it coincides with the description given in Definition \ref{substackdefn}.
Because of Lemma $\ref{stricttransformmodule}$, the corresponding Higgs bundle on $P^1_T$ will satisfy the criteria to be an object of $\H$.

\begin{lemma}
  The divisor $\Delta$ can be represented by either a divisor at infinity or by the strict transform of a spectrum, which is contained in $B$.
  In any case, the corresponding subscheme of $W$ is connected.
  The subscheme has arithmetic genus one and only Gorenstein singularities.
  \label{arithmeticgenus}
\end{lemma}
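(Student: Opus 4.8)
The plan is to compute everything from the known divisor class $\Delta$ together with the intersection numbers recorded above. First I would pin down the class of $\Delta$ explicitly in each of the four cases. We have $[S] = \Theta r[0] - \sum_{i,j} c_{\lambda_{i,j}} r\, E_{i,j}$, so $\Delta = \Theta[0] - \sum_{i,j} c_{i,j} E_{i,j}$; substituting $[0] = \infty + 2F$ in case I and $[0] = \infty + F$ otherwise gives $\Delta$ in the generators $F, \infty, E_{i,j}$. Then, to get the arithmetic genus, I would apply the adjunction formula on the rational surface $W$: $p_a(\Delta) = 1 + \tfrac12\big(\Delta^2 + \Delta\cdot K_W\big)$. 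I need the canonical class of $W$. Since $W$ is a blowup of the Hirzebruch-type surface $\P(\O\oplus\Omega')$ over $\P^1$ at the points $e_{i,j}$, its canonical class is (the pullback of) $K_{\P(\O\oplus\Omega')}$ plus $\sum E_{i,j}$; and $K_{\P(\O\oplus\Omega')}$ is expressible in terms of $\infty$, $[0]$ and $F$ by the standard ruled-surface formula. With these in hand the computation of $\Delta^2$ and $\Delta\cdot K_W$ is a routine bookkeeping exercise using $F^2=0$, $F\cdot\infty = F\cdot[0]$, $[0]^2 = -\infty^2 \in\{2,1\}$, $E_{i,j}^2 = -1$, and the orthogonality of the $E_{i,j}$ to $F$ and $\infty$; one checks case by case (I: four points, $\Theta=2$; II: nine points, $\Theta=3$; III and IV with the stated multiplicities $c_i$) that $\Delta^2 + \Delta\cdot K_W = 0$, hence $p_a(\Delta) = 1$.

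Next, the two representations. The divisor ``at infinity'' is the one supported on the section $\P(\Omega')\subset\P(\O\oplus\Omega')$: one checks that $\Delta$ minus that explicit effective divisor is still effective (indeed, since $\Delta\cdot\infty$ can be arranged $\geq 0$ and the residual class is visibly base-point free along fibers), exhibiting a member of $|\Delta|$ disjoint from $B$. The other representation is the strict transform $S'$ of an honest spectral curve, which by Corollary \ref{stricttransform} lies in $B$; that such an $S'$ exists in the class $\Delta$ for appropriate choices of characteristic polynomial was essentially the content of the discussion preceding Definition \ref{substackdefn2}, and in cases I and II it is immediate from the one-parameter family of $a_2$ (resp. $a_3$). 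Connectedness follows because $\Delta\cdot F = \Theta > 0$ forces any member to dominate $\P^1$, and a disconnected member would split $\Theta$ as a sum of positive fiber degrees whose components would have to pair nonnegatively with all the $E_{i,j}$ and with $\infty$; the genus-one and fiber-degree constraints rule this out — more cleanly, since $h^0(W,\O_W)=1$ and $p_a(\Delta)=1$ with $\Delta$ effective and connected-in-the-generic-fiber, one gets $h^0(\O_\Delta)=1$.

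Finally, the Gorenstein claim. A curve on a smooth surface is always a Cartier divisor in a Cohen–Macaulay (indeed regular) ambient surface, hence is itself Cohen–Macaulay of pure dimension one, and its dualizing sheaf is $\omega_\Delta = \O_\Delta(\Delta + K_W)$ by adjunction, which is invertible; a one-dimensional Cohen–Macaulay scheme with invertible dualizing sheaf is Gorenstein. So the Gorenstein property is automatic once we know $\Delta$ is an effective Cartier divisor on the smooth surface $W$, which it is by construction. The only real work, and the step I expect to be the main obstacle, is the case-by-case verification that $\Delta^2 + \Delta\cdot K_W = 0$ — this is where the specific numerology of the four Simpson cases (the numbers of marked points, the multiplicities $c_i$, and the values $\Theta$) has to conspire correctly, and where an error in the canonical class of the blown-up ruled surface would propagate; I would organize it as a single table paralleling Table 1. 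The connectedness argument is the only other place needing a little care, since one must exclude the possibility of a disconnected (e.g. ``horizontal component plus fiber-type component'') representative, but the positivity of $\Delta\cdot F$ together with $\Delta\cdot E_{i,j} = c_{i,j} > 0$ and $\Delta\cdot\infty \geq 0$ constrains this tightly.
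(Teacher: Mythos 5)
Your route to the arithmetic genus is genuinely different from the paper's and, as far as that part goes, it works. The paper computes $h^0$ and $h^1$ of $\O_{S}$ by pushing forward to $\P^1$ (getting $\pi_*\O_S \cong \O\oplus(\Omega')^{-1}\oplus\cdots\oplus(\Omega')^{1-\Theta}$, hence $h^1 = 1,1,3,10$ in the four cases) and then tracks, case by case, how much arithmetic genus is lost at each blowup center according to the multiplicity of $S$ there. Your adjunction computation on $W$ replaces all of that with $\Delta^2 = 0$ (which the paper proves separately in Lemma \ref{hitchinmap}) and $\Delta\cdot K_W = -\sum_i(\Theta - \sum_j c_{i,j}) = 0$, since the multiplicities at each marked point sum to $\Theta$; this is shorter and less error-prone, and it applies uniformly to the divisor at infinity and to the strict transforms. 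The Gorenstein argument is identical to the paper's.

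The genuine gap is connectedness in cases III and IV, which is exactly where the paper does its real work, and your sketch does not close it. Two problems. First, the implication you lean on runs the wrong way: adjunction only gives $\chi(\O_\Delta)=0$, i.e.\ $h^0(\O_\Delta)=h^1(\O_\Delta)$; you need connectedness (and reducedness) to conclude $h^0=1$, not vice versa, so ``$p_a(\Delta)=1$ implies $h^0(\O_\Delta)=1$'' is circular. Second, the positivity constraints you list ($\Delta\cdot F=\Theta>0$, $\Delta\cdot E_{i,j}>0$, $\Delta\cdot\infty\geq 0$) do not by themselves exclude a disjoint union of two multisections: in cases III and IV the blowup genuinely separates branches of $S$ at the centers $e_{1,j}$ (and $e_{2,j}$ in case IV), so one must show that not \emph{all} intersection points of a putative factorization $S = V(f_1)\cup V(f_2)$, with classes $k[0]$ and $(\Theta-k)[0]$, can be absorbed there. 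The paper's argument is a count: the two factors meet in $k(\Theta-k)[0]^2$ points with multiplicity, while the intersection multiplicity that can disappear at a center where $S$ has multiplicity $m$ is at most the product of the two factors' multiplicities there, and summing these bounds over the centers always leaves at least one surviving intersection point. You would need to supply this (or an equivalent) enumeration to complete the proof.
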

\begin{proof}
  The divisors at infinity are connected, by inspection, in all the four cases we consider.
  In the case of a divisor on $B$ representing $\delta$, we write $S'$ for the subscheme of $W$ and $S$ for the subscheme of the pushforward down in $\P(\Omega'\oplus \O)$.
  We begin by considering the spectral curve $S\sub |\Omega'|$ in the case $r=1$.
  This is the zero set of a degree $\Theta$ relation from $\pi^*\Omega'$ on $|\Omega'|$, and so its structure sheaf is
  \[\pi_* \O_S \isom \O\oplus (\Omega')\i \oplus\cdots\oplus (\Omega')^{1-\Theta}\]
  as a bundle on $\P^1$.
  In all cases, then, $h^0(\O_S)=1$ and $h^1(\O_S)=1,1,3,10$ in cases I, II, III, and IV respectively.

  In cases I and II, we see that $S'$ is connected simply because $S\iso S'$ and $S$ is connected by the above cohomological calculation.
  In cases III and IV, the spectrum $S$ is connected by the same argument,
  but since the blowup changes the abstract curve in these cases, we need to work a little harder to show that $S'$ is still connected.

  Consider that in case III, the divisor $S$ is $4[0]$ in the divisor class of the Hirzebruch surface.
  If $S'$ is disconnected, it is because $S$ has irreducible components meeting at the centers of the blowup $\sigma$.
  As $S$ is the vanishing of a monic polynomial, it can only be factored of the form $k[0] + (4-k)[0]$.
  In any case, the two components will intersect in at least 3 points counted with multiplicity.
  However, the only points of the blowup where the curve will actually change are the centers for $E_{1,1}$ and $E_{1,2}$.
  Take $e_{1,1}$- the components multiplicities at this point must add to two, so at most their intersection here is 1.
  Likewise at $e_{1,2}$, so there must be one other intersection point of the two components, which will not be resolved in the blowup $\sigma$.

  Now consider case IV. The divisor $S$ is $6[0]$. 
  If $S$ factors as $[0] + 5[0]$, then the components have 5 points of intersection in the Hirzebruch surface.
  The smaller piece can intersect the larger with multiplicity $2$ at one of the $e_{1,i}$ and multiplicity $1$ at one of the $e_{2,i}$. 
  At any rate, there are still more intersections.
  If $S$ factors as $2[0]+4[0]$ or $3[0]+3[0]$, then the intersections are at least 8,
  but the maximum intersections at the blowups are 2 for the $e_{1,i}$ and 1 for the $e_{2,i}$, so there is still at least one point of intersection after the strict transform.

  Moreover, the spectral curve and its strict transform are reduced: The curve $S$ in $|\Omega'|$ is generically reduced since it has multiplicity 1 at some of the points $e_{i,j}$.
  There can be no embedded points for either $S$ or $S'$ as they are divisors in their respective smooth surfaces.
  Hence the connectedness of $S'$ implies that $h^0(\O_{S'})=1$.

  To see that $S'$ has arithmetic genus one in cases I and II, only consider that $S\iso S'$ 
  and that the structure sheaf of $S$ is $\O\oplus \O(-2)$ in case I and $\O\oplus \O(-1)\oplus \O(-2)$ in case II.
  These vector bundles on $\P^1$ both have $h^1=1$. 
  Now consider case III where $\O_S\isom \O\oplus\O(-1)\oplus\O(-2)\oplus\O(-3)$ as $\O_{\P^1}$ modules.
  Then $h^1(\O_S)=3$ plainly. Since the curve $S$ has multiplicity 2 at each of $e_{1,1}$ and $e_{1,2}$, it loses two from its arithmetic genus in the blowup.
  Likewise, in case IV the structure sheaf of $S$ is $\O\oplus\cdots\oplus\O(-5)$ with $h^1=10$.
  At points $e_{1,1}$ and $e_{1,2}$, $S$ is multiplicity 3 and so loses 3 arithmetic genus from each of these points in the blowup.
  At the three points $e_{2,j}$, $S$ is multiplicity 2 and so loses 1 arithmetic genus at each of these points.
  
  Now we have that $S'$ is a reduced Cartier divisor in $W$ with $h^0(\O_{S'})=h^1(\O_{S'})=1$.
  These curves have at worst Gorenstein singularities simply by virtue of being Cartier divisors in the smooth surface $W$.
\end{proof}

Our analysis of $S'$ so far has been dependent on knowing that $S'$ comes to us as the strict transform of the spectrum of an operator.
To get further, we need to show that these spectra exist and then study the complete linear series associated to $\Delta$ and its multiples.

\begin{lemma}
  The divisor at infinity $\Delta_\infty$ has trivial normal bundle iff
  \[\sum_{i,j} c_{i,j} \lambda_{i,j} = 0\]
  and can be deformed away from itself entirely to lie in the open subset $B$.
  In this case, we are guaranteed existence of points in $\E_{r,-1}$.\footnote{Later we will use this to prove existence of stable modules for other choices of $d$.}
  \label{existence}
\end{lemma}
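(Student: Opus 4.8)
The plan is to compute the normal bundle of $\Delta_\infty$ inside $W$ directly from the divisor-class data already assembled, and then separately analyze the linear system $|\Delta|$ to produce a genuine spectral curve in $B$ once the normal bundle is trivial. For the normal bundle, I would start from the decomposition of $\Delta_\infty$ into its components in each of the four cases: at infinity the divisor $\Delta$ restricts to a union of the section $\infty$ (with appropriate multiplicity) together with the exceptional curves $E_{i,j}$, glued so that the total class is $\Theta[0]-\sum_{i,j}c_{i,j}E_{i,j}$ reduced modulo the relation $[0]=\infty+F$ (resp. $\infty+2F$ in case I). The normal bundle of this (possibly reducible, nodal) curve $\Delta_\infty$ is computed componentwise: on each exceptional $E_{i,j}\cong\P^1$ one gets a line bundle whose degree is $\Delta_\infty.E_{i,j}=\Delta.E_{i,j}$ adjusted by $E_{i,j}^2=-1$, and on the section-at-infinity component one gets a bundle whose degree is controlled by $\infty^2=-2$ (case I) or $-1$ (cases II--IV) together with the intersection with the $E_{i,j}$. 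The key point is that the gluing of these line bundles into a line bundle on the connected nodal curve $\Delta_\infty$ has total degree zero, and the obstruction to triviality is exactly one number: I expect that transition-function bookkeeping across the nodes produces a class in $H^1(\Delta_\infty,\mathcal{O}^*)$ whose ``continuous part'' is governed by $\sum_{i,j}c_{i,j}\lambda_{i,j}$, since the position $e_{i,j}$ of the blown-up point in the fiber $\pi\i(p_i)\cong\mathbb{A}^1$ is precisely the residue $\lambda_{i,j}$.

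Concretely, I would trivialize $\Omega'$ near each $p_i$ by a coordinate $y$ on the fiber, so that $e_{i,j}$ sits at $y=\lambda_{i,j}$, and compare this with a global trivialization valid near $\infty$; the discrepancy between ``$y$'' and ``$y-\lambda_{i,j}$'' is what gets recorded in the exceptional-divisor directions. Summing these local contributions and using Lemma \ref{trace} (that the $\lambda$'s already satisfy one linear relation coming from the fixed residue data) should show that the normal bundle $N_{\Delta_\infty/W}$ is trivial precisely when the weighted sum $\sum_{i,j}c_{i,j}\lambda_{i,j}$ vanishes, and is a nontrivial degree-zero line bundle (hence with no sections) otherwise. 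I expect this computation to be the main obstacle: it requires carefully tracking the gluing data on a reducible Gorenstein curve of arithmetic genus one, and getting the exact coefficient $c_{i,j}$ in front of $\lambda_{i,j}$ right depends on the multiplicities with which the exceptional curves enter $\Delta$.

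Once the normal bundle is trivial, the deformation of $\Delta_\infty$ off itself is almost immediate: $\Delta_\infty$ is a Cartier divisor with $h^0(N_{\Delta_\infty/W})=h^0(\mathcal{O}_{\Delta_\infty})=1$ by Lemma \ref{arithmeticgenus}, so the linear system $|\Delta|$ has at least a one-parameter family of members; since $\Delta_\infty$ is the unique effective divisor supported at infinity (the section and the exceptionals are rigid), a generic member of $|\Delta|$ cannot be supported at infinity and, because it still pairs with each strict-transform fiber $F'$ to zero by Corollary \ref{stricttransform}, it must lie entirely in the open set $B$. That generic member is by construction the strict transform of a spectral curve $S$ of some $\Omega'$-twisted operator realizing the prescribed residue eigenvalues, and taking $M=\mathcal{O}_{S'}$ with its natural $|\Omega'|$-module structure gives a point of $\E_{r,-1}$ (the degree is $d=-1$ since $\pi_*\mathcal{O}_{S'}$ has the shape computed in Lemma \ref{arithmeticgenus}, whose determinant is $\mathcal{O}(-1)$ in case I, etc.). Finally I would remark that triviality of the normal bundle is also necessary for such a deformation to exist, so the ``iff'' is genuine, closing the lemma.
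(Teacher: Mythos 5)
Your overall architecture (compute the normal bundle of $\Delta_\infty$, then deform it off infinity) matches the paper's, but the execution has a genuine gap and starts from a wrong decomposition. First, the components of $\Delta_\infty$ are not ``$\infty$ together with the exceptional curves $E_{i,j}$'': the divisor at infinity is $\Theta\infty+\sum_i c_i F_i$, where $F_i$ is the strict transform of the fiber over $p_i$; the exceptional divisors are exactly the curves that $\Delta$ meets positively (where the spectral data lives) and they are not contained in $W\setminus B$. Moreover $\Delta_\infty$ is non-reduced (multiplicities $\Theta$ and $c_i$), so the picture of a nodal curve built from copies of $\P^1$ with line bundles glued at nodes, and the dichotomy ``a degree-zero section either vanishes nowhere or vanishes identically,'' are not available off the shelf. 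The paper's inductive descent through a chain of subdivisors $\Delta_\infty\supset\Delta_1\supset\cdots\supset 0$, chosen so that each successive quotient is a reduced $\infty$ or $F_i$ on which the relevant line bundle has negative degree, is precisely the device that replaces this, and your sketch supplies no substitute. Second, the heart of the lemma --- that triviality of the normal bundle is equivalent to $\sum_{i,j}c_{i,j}\lambda_{i,j}=0$ --- is exactly the step you defer (``I expect this computation to be the main obstacle''). The paper never performs the transition-function bookkeeping you propose: it reduces existence of a section to the vanishing of a single determinant in the $\lambda_{i,j}$, shows that whenever that determinant vanishes one obtains an honest Higgs bundle supported on a spectral curve inside $B$ whose trace is a logarithmic differential with residue $\sum_j c_{i,j}\lambda_{i,j}$ at $p_i$, and then invokes Lemma \ref{trace} to force the weighted sum to vanish; combined with explicit examples showing the determinant is not identically nonvanishing, this identifies the determinant locus with the hyperplane $\sum_{i,j} c_{i,j}\lambda_{i,j}=0$ without ever computing the determinant.

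Two smaller points. To move the divisor into $B$ you should not rely on ``the generic member of the pencil is not supported at infinity'': that only rules out containment in $W\setminus B$, not intersection with it, and the intersection-number argument you gesture at still requires knowing the generic member shares no component with $\infty$ or the $F_i$. The paper instead lifts a nowhere-vanishing section of $\O(\Delta)|_{\Delta_\infty}$ to a global section of $\O(\Delta)$ on $W$ (possible since $H^1(W,\O_W)=0$); its zero locus is then literally disjoint from the support of $\Delta_\infty$, which is all of $W\setminus B$. Finally, taking $M=\O_{S'}$ only produces a point of $\E_{1,d}$ and says nothing about stability; for general $r$ the paper invokes the existence result for stable sheaves of prescribed rank and degree on a Gorenstein curve of arithmetic genus one, \cite[Cor.~1.25]{spanish}, which your proposal omits.
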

\begin{proof}
  We recall that the divisor at infinity is 
  \[\Delta_\infty = \Theta \infty + \sum_{p_i\in Z} c_i F_i\]
  where the divisor $F_i$ is the strict transform to $W$ of the fiber over $p_i$ in the Hirzebruch surface.
  Note that this is not simply a divisor class, but a specific divisor.
  The sheaf which controls deformations of $\Delta_\infty$ is the restriction of the line bundle $\O(\Delta)$ to $\Delta_\infty$, which we will simply call the normal bundle
  We begin by trying to establish existence of a global section of $\O(\Delta)/\O$.
  Breaking the subscheme $\Delta_\infty$ into its irreducible components, we get the exact sequence
  \[0\ra \O(\Delta)|_{\Di}\ra \O(\Delta)|_{\Theta \infty}\oplus \bigoplus_{p_i\in Z} \O(\Delta)|_{c_i F_i} \ra \O(\Delta)|_\mathcal{I}\ra 0\]
  The subscheme $\mathcal{I}$ is the finite one given by the intersection of $\Theta$ times the infinity divisor and the fibers $c_i F_i$.
  The normal bundle will have a nonzero global section iff the second map is not injective.
  In every case I through IV, the latter two sheaves have spaces of global sections with an equal number of global sections.
  Explicitly, these spaces are 8, 9, 16, and 36 dimensional in cases I, II, III, and IV, respectively.

  So existence of a global section of the normal bundle comes down the vanishing of the determinant of a single matrix.
  This matrix is a function of the eigenvalues $\lambda_{i,j}$, 
  because these determine how global sections of $\O(\Delta)$ on $c_i F_i$ look as sections at the intersection points $\mathcal{I}$.
  Suppose that this determinant vanishes and the normal bundle has a global section $s$.
  Assume that $s$ vanishes somewhere on $\Di$.
  If $s$ only vanishes at some closed points of one of the reduced divisors $\infty$ or $F_i$, then it implies that the normal bundle there is of positive degree,
  as $s$ produces an injection from the structure sheaf of that reduced divisor into the normal bundle.
  Hence $s$ must either vanish entirely or never vanish, on each of the reduced divisors.
  Since $s$ vanishes somewhere and $\Di$ is connected, $s$ vanishes on the reduced subscheme of $\Di$.
  Now $s$ has been downgraded from a section of $\O(\Delta)/\O$ to a section of $\O(\Delta - \infty - \sum F_i)/\O$.

  One can inductively show from here that $s$ must vanish on the whole subscheme as follows:
  Give $\Delta_\infty -\infty - \sum F_i$ the name $\Delta_1$.
  From here we give a decreasing sequence of sub-divisors $\Delta_i$, such that $\Delta_i - \Delta_{i+1}$ is one of the reduced divisors $\infty$ or $F_i$.
  By picking this sequence correctly, we find that $\O(\Delta_i)$ is a negative degree line bundle on the divisor $\Delta_i - \Delta_{i+1}$.
  Hence the section $s$, when it is a section of $\O(\Delta_i)/\O$ is consequentially a section in the subsheaf $\O(\Delta_{i+1})/\O$.
  The section $s$ is then, inductively, a section of the zero sheaf once $\Delta_i$ is the empty divisor.
  Listed below are these sequences for each of the four cases:

  \begin{center}
  \begin{tabular}{|c|c|c|c|}
    \hline
    I&II&III&IV\\
    \hline
    $\infty$&$2\infty$ & $3\infty + F_1$& $5\infty + 2 F_1 + F_2$\\
    $0$ & $\infty$& $2\infty + F_1$& $4\infty + 2 F_1 + F_2$\\
    & $0$& $\infty + F_1$& $3\infty + 2 F_1 + F_2$\\
    & & $\infty$& $3\infty + F_1 + F_2$\\
    & & $0$& $2\infty + F_1 + F_2$\\
    & & & $2\infty + F_1$\\
    & & &$\infty + F_1$ \\
    & & &$\infty$ \\
    & & &$0$ \\
    \hline
  \end{tabular}
  \end{center}

  What we have shown is that a single determinant in terms of the $\lambda_{i,j}$ vanishes iff the normal bundle to $\Delta_\infty$ is a trivial line bundle.
  If the normal bundle is trivial, take $s\in H^0(\Delta_\infty, \O(\Delta))$ to be a nowhere vanishing section, 
  and lift this to a global section of $\O(\Delta)$ on $W$.
  This may be done because the first cohomology of $\O$ vanishes for the surface $W$.
  Now $V(s)$ is another divisor in the class $\Delta$ which is contained in $B$ since the section $s$ vanishes nowhere on $\Delta_\infty$.
  Call this divisor $S$.

  The canonical class of the surface $W$ is \[\omega_W = -2\infty - \sum_{p_i\in Z} F_i\] and the normal bundle of $S$ is the pullback of $\Delta$.
  Both of these divisors can be represented by divisors on the compliment of $B$ in $W$, so the normal bundle
  and, by the adjunction formula, the canonical bundle of $S$ are both trivial.
  Note that, though $S$ may not be smooth, it gets a canonical line bundle because it has at worst Gorenstein singularities.

  The curve $S$ is reduced- every of its possible components must pass through the fiber over $p_3$, and $S$ intersects each of $E_{3,j}$ with multiplicity one.
  None of the components of $S$ could be the divisor $E_{3,j}$ itself, so each component of $S$ must be generically reduced 
  and there can be no embedded points on a Cartier divisor in $W$.

  The pushforward of $S$ to $\P(\Omega'\oplus \O)$ is in the class of a spectral curve of the type in $\H_{r,d}$.
  Because it doesn't intersect infinity, this pushforward is contained in $|\Omega'|$ and is, in fact, the vanishing of a characteristic polynomial.
  The cohomological calculations of \ref{arithmeticgenus} show that $S$, as the strict transform of its pushforward, must be connected.
  By \cite[Cor. 1.25]{spanish}, there will exist a generalized vector bundle on $S$ of the type required in $\E_{r,d}$.
  Taking $(V,\phi)$ to be the corresponding Higgs bundle to one of these points, say, of $\E_{1,1}$, we compute the trace of $\phi$.
  The trace of $\phi$ is a global section of $|\Omega'|$ which has residue $\sum_j c_i \lambda_{i,j}$ at $p_i$.
  The only way such a section can exist is if the sum 
  \[\sum_{p_i\in Z}c_i \sum_j \lambda_{i,j} = 0\]
  Since the existence of the bundle was guaranteed based solely on the vanishing of a single determinant in terms of the $\lambda_{i,j}$,
  that determinant condition must either be equivalent to the vanishing of this sum, or be never satisfied, for any choices of $\lambda_{i,j}$.
  The later case can be ruled out by a careful choice of coefficients of a characteristic polynomial in cases I and II,
  and by building $S$ as a union of rational double covers of $\P^1$ in cases III and IV.
  Hence, the linear condition on the sum of the eigenvalues, which we knew was a necessary condition for existence of Higgs bundle,
  turns out to be sufficient as well.
\end{proof}

From now on, it is a standing assumption that this sum of eigenvalues, added with multiplicity, is zero.
This guarantees existence of appropriate spectral curves in $B$ which support the Higgs bundles we study in $\H_{r,d}$.

\begin{lemma}
  The divisor $\Delta$ has self intersection zero, is base point free, and determines a map to $\P^1_k$.
  The divisor $S$ is a pullback of a divisor in the class of $r$ times a point on $\P^1$, and as such is equal to a sum of divisors in the class $\Delta$.
  \label{hitchinmap}
\end{lemma}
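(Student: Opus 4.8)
The plan is to verify the four assertions in sequence. First, $\Delta^2=0$ is a direct computation: from the class formula $[\Delta]=\Theta[0]-\sum_{p_i\in Z}\sum_j c_{i,j}E_{i,j}$ of this section, together with $E_{i,j}^2=-1$, the $E_{i,j}$ orthogonal to $[0]$ and to one another, and $[0]^2=2$ in case I and $[0]^2=1$ in cases II--IV, one gets $\Delta^2=\Theta^2[0]^2-\sum_{i,j}c_{i,j}^2$, and this vanishes in each of the four cases (checking: $4\cdot2=8=8\cdot 1$; $9\cdot1=9=9\cdot1$; $16\cdot1=16=2\cdot4+8\cdot1$; $36\cdot1=36=2\cdot9+3\cdot4+6\cdot1$).

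Next I would compute $h^0(\O_W(\Delta))$ and deduce base point freeness. By Lemma \ref{existence}, under the standing hypothesis $\sum c_{i,j}\lambda_{i,j}=0$ there is a reduced, connected member $D_1\in|\Delta|$ contained in $B$, disjoint from the divisor at infinity $\Delta_\infty$, and with trivial normal bundle. Tensoring the structure sequence $0\to\O_W(-D_1)\to\O_W\to\O_{D_1}\to0$ by $\O_W(\Delta)\cong\O_W(D_1)$ produces $0\to\O_W\to\O_W(\Delta)\to\O_{D_1}\to0$, since the quotient is the (trivial) normal bundle of $D_1$. As $h^0(\O_W)=1$, $h^1(\O_W)=0$ ($W$ is rational), and $h^0(\O_{D_1})=1$ (Lemma \ref{arithmeticgenus}), the long exact sequence forces $h^0(\O_W(\Delta))=2$. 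The pencil $|\Delta|$ then contains two disjoint members, $\Delta_\infty$ and $D_1$, so its base locus is empty; since $h^0=2$, the associated morphism is $\psi:W\to\P(H^0(\O_W(\Delta))^{\vee})\cong\P^1_k$.

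To see that $\psi$ has connected fibres, take its Stein factorization $\psi=g\circ\psi'$ with $\psi':W\to C$ satisfying $\psi'_*\O_W=\O_C$ and $g:C\to\P^1_k$ finite of degree $e$. Since $C$ is a smooth projective curve dominated by the rational surface $W$, it is rational, so $C\cong\P^1_k$ and $\O_W(\Delta)=\psi^*\O(1)\cong(\psi'^*\O_C(1))^{\otimes e}$; intersecting with $E_{i,j}$ and using $\Delta.E_{i,j}=c_{i,j}$ gives $e\mid c_{i,j}$ for all $i,j$, and since some $c_{i,j}$ equals $1$ we get $e=1$, whence $g$ is an isomorphism, $\psi$ has connected fibres, and $\psi_*\O_W=\O_{\P^1}$. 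Finally, for the spectral divisor $S$ of an object of $\H_{r,d}$: its class, computed in this section, is $\Theta r[0]-\sum c_{i,j}rE_{i,j}=r[\Delta]$, so $\O_W(S)\cong\psi^*\O_{\P^1}(r)$ in the free divisor class group of $W$; the section cutting out $S$ then lies in $H^0(W,\psi^*\O_{\P^1}(r))$, which by the projection formula and $\psi_*\O_W=\O_{\P^1}$ equals $H^0(\P^1,\O_{\P^1}(r))$. Hence that section is pulled back from $\P^1$, so $S=\psi\i(D)$ for an effective degree-$r$ divisor $D$ on $\P^1$, and writing $D=\sum_{k=1}^r q_k$ we conclude $S=\sum_k\psi^*(q_k)$, a sum of $r$ divisors each in the class $\Delta$.

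The main obstacle is the connectedness of the fibres of $\psi$ — equivalently, that $|\Delta|$ is not composite with a pencil and $\psi$ does not factor through a nontrivial cover of $\P^1$ — which rests on the indivisibility of $[\Delta]$ in $\mbox{Pic}(W)$, read off from $\Delta.E_{i,j}=c_{i,j}$ and the presence of some $c_{i,j}=1$; without connected fibres the section-pullback argument, hence the decomposition of $S$ into members of $|\Delta|$, would break. Everything else is intersection-number bookkeeping together with direct use of Lemmas \ref{arithmeticgenus} and \ref{existence}.
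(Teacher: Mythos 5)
Your proof is correct and establishes all four assertions; the overall skeleton (compute $\Delta^2=0$, show $h^0(\O_W(\Delta))=2$ and base point freeness, establish that the resulting map to $\P^1_k$ is $\O$-connected, then descend the section cutting out $S$ via the projection formula) is the same as the paper's, but two of the sub-steps are argued by genuinely different means. For $h^0(\O_W(\Delta))=2$ you tensor the structure sequence of a member $D_1\subset B$ by $\O_W(\Delta)$ and invoke the triviality of its normal bundle (Lemma \ref{existence}) together with $h^0(\O_{D_1})=1$ (Lemma \ref{arithmeticgenus}) and $h^1(\O_W)=0$; the paper instead rules out $h^0>2$ geometrically, observing that some $E_{i,j}$ satisfies $\Delta.E_{i,j}=1$, so a higher-dimensional image of $h$ would either be a line or would force two members of $|\Delta|$ to meet, contradicting $\Delta^2=0$. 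For connectedness of the fibres you take the Stein factorization and kill the finite part by the divisibility $e\mid\Delta.E_{i,j}=c_{i,j}$ together with the presence of some $c_{i,j}=1$ (which indeed holds in all four cases); the paper instead shows $h$ is flat, that $h_*\O_W$ is a line bundle of fibrewise rank one, and that the section $1$ trivializes it\textemdash an argument that reuses the connectedness of the members of $|\Delta|$ from Lemma \ref{arithmeticgenus}, whereas yours derives fibre connectedness from the primitivity of the class $\Delta$ in the divisor class group. Both routes are sound; yours isolates the needed input to a single numerical fact and is arguably cleaner, while the paper's recycles cohomological computations already in hand. The concluding step, $\psi_*\O_W=\O_{\P^1}$ plus the projection formula identifying $H^0(W,\O(r\Delta))$ with $H^0(\P^1,\O(r))$ and hence writing $S=\psi^*D$ for an effective degree-$r$ divisor $D$, coincides with the paper's.
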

\begin{proof}
  First note that 
  \[\Delta^2 = \Theta^2 [0]^2 + \sum_{p_i\in Z}\sum_j c_{\lambda_{i,j}}^2 (E_{i,j})^2\]
  In every case, this amounts to zero.

  We would now like to demonstrate that this divisor is base point free and determines a map to $\P^1$.
  Points of $W\setminus B$ are not in the base locus because all of these spectral curves are contained in $B$.
  On the other hand, we may also represent $\Delta$ by a divisor contained in $W\setminus B$.
  Specifically,  the divisor $\Di$ studied above in the proof of Lemma \ref{existence}.
  Thus the divisor $\Delta$ defines a regular map $h:W\ra \P^n_k$.

  So far we know all our spectral curves (case $r=1$) and the divisor at infinity are effective divisors in the class $\Delta$.
  We would like to show that these divisors we've named are the only in the class $\Delta$- in other words that $h^0(\O(\Delta)) = 2$.
  Since there are at least two effective representatives of $\Delta$, $h^0(\Delta)\geq 2$.
  If $h^0(\Delta)>2$, we have two cases.
  First suppose that the image of $h$ is one dimensional.
  In every case, one of exceptional divisors $E$ satisfies $E.\Delta=1$.
  Hence the map $h$ is degree one when restricted to $E$.
  Now the image of $h$ is dimension one, irreducible, and contains a degree one embedding of a $\P^1$.
  The only possibility is that the image of $h$ is a line, which contradicts the assumption $h^0(\Delta)>0$.
  Now suppose that the image of $h$ is two dimensional.
  Then every codimension two linear subspace of $\P^n$ intersects the image of $h$, so any two divisors in $\Delta$ will intersect.
  This contradicts the fact that $\Delta^2=0$.
  Hence we must have $h^0(\Delta)=2$ and $h:W\ra\P^1_k$.
  Arrange that $h\i(\infty)$ is the divisor at infinity, so $B = h\i(\A^1)$.
  
  The spectral curve $S$ is in the class $r\Delta$, so it represented by a global section of the pullback $h^*\O(r)$.
  We claim that the map $h$ is $\O$-connected.
  Since $h$ is dominant to $\P^1$ and since $W$ has only one associated point, the map $h$ is flat.
  The pushforward $h_*\O_W$ is torsion free, hence a vector bundle.
  We know that, fiberwise, the rank of $h_* \O_W$ is $1$, so by \cite[Corollary III.9.4]{hartshorne} $h_* \O_W$ must be a line bundle.
  Since $h_* \O_W$ has the nowhere vanishing global section $1$, $\O_{\P^1}\isom h_* \O_W$ and $h$ is $\O$-connected.

  This implies surjectivity of pullback on global sections 
  \[H^0(\P^1,\O(r))\ra H^0(W, \O(r\Delta))\]
  Hence $S$ must be the pullback of some degree $k$ divisor, call it $D$, on $\P^1_k$.
  If $D$ is supported at multiple points of $\P^1_k$, then the module $N_\phi$ would split as a direct sum, and so then would the Higgs bundle $(V,\phi)$.
  This is impossible since we assume $(V,\phi)$ is stable.
  So the spectral curve and the characteristic polynomial are both $r$-th powers.
\end{proof}

This characterization of the spectrum $S'$ allows us to prove the main result of this section.

\begin{lemma}
  Any object $(V,\phi)\in \H_{r,d}(k)$ is isomorphic to an object in the image of $\E_{r,d}(k)$.
  \label{victoryforfields}
\end{lemma}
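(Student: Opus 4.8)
The plan is to produce $(V,\phi)$ as the pushforward of a sheaf on the reduced spectral curve, and then read off that it is the image of an object of $\E_{r,d}(k)$. First I would extract the module on the surface: because the residue endomorphisms of an object of $\H_{r,d}$ are diagonalizable, Corollary~\ref{stricttransform} places the strict transform $S'_\phi$ of the spectrum inside the open set $B$, and Proposition~\ref{stricttransformmodule} (with the discussion following it) lifts $M_\phi$ to a coherent $\O_B$-module $N_\phi$ supported on $S'_\phi$. By Lemma~\ref{hitchinmap} the divisor $S'_\phi$ lies in the class $r\Delta$ and equals $h^*(r\cdot b)=rE_0$, where $h\colon W\to\P^1$ is the Hitchin map and $E_0:=h^{-1}(b)$ is an effective Cartier divisor in the class $\Delta$; since $h^{-1}(\infty)=\Di\not\subset B$ we have $b\in\A^1$, so $E_0\subset B$. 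The map $E_0\to\P^1$ is finite (the fibre class meets $\Delta$ in $\Theta$), so every component of $E_0$ dominates $\P^1$ and hence meets, transversally and at a single reduced point, one of the exceptional divisors $E_{i,j}$ with $c_{i,j}=1$ (such a point $p_i$ occurs in each of the four cases); therefore $E_0$ is reduced, and by Lemma~\ref{arithmeticgenus} it is connected, Gorenstein of arithmetic genus one, so in particular $\chi(\O_{E_0})=0$.

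The crux is to show that $N_\phi$ is scheme-theoretically supported on $E_0$, equivalently that the minimal polynomial of $\phi$ cuts out $E_0$ and not the thickening $rE_0$. Suppose to the contrary that $\I_{E_0}N_\phi\neq 0$, where $\I_{E_0}$ is the ideal of $E_0$ in $W$. Since $\pi$ is affine, $\pi_*$ is exact and $\pi_*(\I_{E_0}N_\phi)\subset\pi_*N_\phi=V$; as $V$ is locally free it has no zero-dimensional torsion, so $\I_{E_0}N_\phi$ has no zero-dimensional associated points and, being a subsheaf of the pure one-dimensional sheaf $N_\phi$, is nonzero at the generic point of some component of $E_0$. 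Consider the $\I_{E_0}$-adic filtration $N_\phi\supseteq\I_{E_0}N_\phi\supseteq\cdots\supseteq\I_{E_0}^{r-1}N_\phi\supseteq\I_{E_0}^rN_\phi=0$; each term is an $\O_W$-submodule, so each $\pi_*\I_{E_0}^kN_\phi$ is a $\phi$-invariant subsheaf of $V$, and, because $\Omega'$ is a line bundle, so is its saturation. By the previous remark the saturation $N''$ of $\pi_*\I_{E_0}N_\phi$ is a proper nonzero $\phi$-invariant sub-bundle. The successive quotients of the filtration are $\O_{E_0}$-modules, and since $\chi(\O_{E_0})=0$ the Euler characteristic of the pushforward of such a module equals its $\O_{E_0}$-degree; running the resulting bookkeeping of degrees and ranks along the filtration — using that the characteristic polynomial is $Q^r$, so $N_\phi$ has generic rank $r$ on every component of $E_0$ — forces one of these sub-bundles $N''$ to satisfy $\mu(N'')\geq\mu(V)=d/(\Theta r)$, contradicting stability. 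Hence $\I_{E_0}N_\phi=0$ and $N_\phi$ is a coherent sheaf on $E_0$.

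Finally I would verify the conditions of Definition~\ref{substackdefn2} for the triple $(E_0,N_\phi,\iota)$: $E_0$ is an effective Cartier divisor in the class $\Delta$ contained in $B$; $N_\phi$, being the module of a vector bundle, is torsion-free over $\P^1$, hence flat over $\P^1$ and pure of dimension one, of generic rank $r$ on $E_0$ (from the characteristic polynomial $Q^r$, together with $\pi_*N_\phi=V$ of rank $\Theta r$) and of length $c_{i,j}r$ along $E_{i,j}$ (since $N_\phi|_{E_{i,j}}$ is supported on the finite scheme $E_0\cap E_{i,j}$ and pushes to the rank-$c_ir$ bundle $V_{\lambda_{i,j}}|_{p_i}$); $\pi_*N_\phi=V$ has degree $d$, and $\iota$ supplies the determinant trivialization. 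So $(E_0,N_\phi,\iota)\in\E_{r,d}(k)$, and by Construction~\ref{totalspace} and Lemma~\ref{substack} its image under $\E_{r,d}\to\H_{r,d}$ is $(V,\phi,\iota)$. The main obstacle is the middle step: estimating the slopes of the $\phi$-invariant sub-bundles coming from the $\I_{E_0}$-adic filtration precisely enough to contradict stability — this is exactly where $\chi(\O_{E_0})=0$ and the explicit form of the class $\Delta$ in each of the four cases are used.
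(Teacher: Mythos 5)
Your overall architecture is sound --- lift $M_\phi$ to $N_\phi$ on $B$, identify $S'_\phi=rE_0$ with $E_0=h^{-1}(b)$ reduced via Lemma \ref{hitchinmap}, reduce to showing $\I_{E_0}N_\phi=0$, then check Definition \ref{substackdefn2} --- but the step you yourself flag as the crux does not go through. The rank/degree bookkeeping along the $\I_{E_0}$-adic filtration does \emph{not} force a destabilizing $\phi$-invariant subsheaf. Take case I with $r=2$, $d=-1$: then $\mathrm{rk}\,V=4$, $\chi(V)=3$, $\mu(V)=-1/4$. If $fN_\phi\neq 0$ (with $f$ the equation of $E_0$ in $B$), the graded pieces $N_\phi/fN_\phi$ and $fN_\phi$ are $\O_{E_0}$-modules of generic rank one, and since $\chi(\O_{E_0})=0$ their pushforwards have Euler characteristics $\chi_0+\chi_1=3$ with $\chi_1\leq\chi_0$ (multiplication by $f$ surjects $N_\phi/fN_\phi$ onto $fN_\phi$, with torsion kernel). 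The assignment $\chi_0=2$, $\chi_1=1$ makes $\pi_*(fN_\phi)$ a rank-$2$, degree $-1$ subsheaf of slope $-1/2<-1/4$: perfectly consistent with stability even after saturating. So no contradiction is forced by these numerics; the obstruction to being supported on $rE_0$ is not a slope obstruction. (Nor could it be: an ordinary stable Higgs bundle with nonzero nilpotent Higgs field has spectral sheaf genuinely supported on a non-reduced curve, and slope arguments cannot distinguish that situation from this one.)

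The correct argument is already latent in your setup and is the one the paper uses. Because $\Delta$ is also represented by $\Di$, which is supported on $W\setminus B$, the divisor class $\Delta$ is trivial over $B$ and $f=h^*(x-b)$ is an honest global \emph{function} on $B$, not a section of a nontrivial line bundle. Multiplication by $f$ therefore commutes with the entire $\O_B$-module structure of $N_\phi$, in particular with $\phi$, and so defines an untwisted endomorphism of the pair $(V,\phi)$ --- much more than a $\phi$-invariant subsheaf. Stability makes the endomorphisms of $(V,\phi)$ a division algebra over $k$; since $f^r$ annihilates $N_\phi$ (it is the defining equation of $S'_\phi$, by Proposition \ref{stricttransformmodule} and Lemma \ref{hitchinmap}), the element $f$ is nilpotent in a division algebra and hence zero, i.e.\ $\I_{E_0}N_\phi=0$. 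This one-line replacement for your filtration argument is exactly where the global geometry of $\Delta$ ($h^0(\O(\Delta))=2$ and the representative at infinity) is used. Your remaining verifications --- reducedness of $E_0$, purity and flatness of $N_\phi$ over $\P^1$, the lengths along the $E_{i,j}$, and the determinant --- are fine.
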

\begin{proof}
  We would like to show that $M_\phi$ is actually supported on the reduced subscheme of the spectral curve.
  The spectral curve $S$ is of the form $V(f^r)$ where $f$ is some function on $B$. 
  This is because the compliment of $B$ is in $W$ is the support of the divisor at infinity of $h$.
  Now $f$ is an endomorphism of $N_\phi$, which in turn gives an endomorphism of the pair $(V,\phi)$.
  Since $(V,\phi)$ is stable, it's endomorphisms form a division algebra, so $f$ is necessarily zero since its $r$ power is zero.
\end{proof}

\section{Deformation Theory of $\H$}

In order to prove the isomorphism of moduli spaces desired, we must treat the case of families parametrized by a non-reduced base.  
Since stability is only a condition on fibers above closed points, this requires us to check a certain amount of deformation theory. 
To be concrete, suppose that $R$ is an Artin local ring, and call its residue field $k$. Suppose that $(V,\phi)$ is a point of $\H_{r,d}(R)$. 
By Lemma \ref{victoryforfields}, the $k$ fiber of $(V,\phi)$ must be a rank $r$ vector bundle on a genus 1, $\Theta$-fold cover of $\P^1_k$.  
We would like to show too that $(V,\phi)$ itself is a rank $r$ bundle on a $\Theta$-fold cover of $\P^1_R$.

To this end, fix
\[ I\inj B\surj A\]
an extension of Artin local rings and assume that the ideal $I$ is square-zero. 
Consider the map
\[\H_{r,d}(B) \ra \H_{r,d}(A)\]
We will show that this map is a torsor for the first hypercohomology of a certain complex (functorial in $I$) 
and that the map surjects onto the kernel of a map from $\H_{r,d}(A)$ to the second hyper cohomology of the same complex:
\[0\ra \Hyp^1(C)\otimes I \ra \H(B) \ra \H(A) \ra \Hyp^2(C)\otimes I\]
In other words, we will compute the complex whose hypercohomology provides the deformation-obstruction theory for $\H_{r,d}$, 
which I will abbreviate to $\H$ for the rest of this section.

We repeat here without proof a commonly known fact about modules over such square-zero extenstions:
\begin{lemma}
  Let $\tilde{X}$ be flat over $\Spec B$ extending $X$ flat over $\Spec A$. Let $\tilde{f}:\tilde{V}\ra \tilde{W}$ be a morphism of $\O_{\tilde{X}}$ bundles extending the map $f:V\ra W$. 
  Then the sheaf of other maps $\tilde{f}':\tilde{V}\ra \tilde{W}$ extending $f$ is a torsor for the additive group $\HOM(V,I\otimes_A W)$.

  In the special case when $\tilde{f}=\id_{\tilde{V}}$, then composition of maps extending $\id_{V}$ corresponds to addition in $I\otimes_A \END(V)$.
  \label{deformmap}
\end{lemma}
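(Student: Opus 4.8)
The plan is to work locally on $\tilde X$ and use the square-zero hypothesis to linearize the problem, then patch. First I would reduce to the affine case: both claims are statements about a sheaf on $\tilde X$ (the sheaf of liftings of $f$, and the claimed torsor structure), so it suffices to exhibit the torsor structure over each affine open and check it is compatible with restriction. So fix an affine open where $\tilde V$ and $\tilde W$ are free. Because $I^2 = 0$ in $B$ and $\tilde X$ is flat over $\Spec B$, we have a short exact sequence of $\O_{\tilde X}$-modules $0 \to I \otimes_A \O_X \to \O_{\tilde X} \to \O_X \to 0$, and tensoring with the (flat, hence locally free) sheaf $\tilde V$ and applying $\HOM(-,\tilde W)$ (or more directly writing $\HOM_{\O_{\tilde X}}(\tilde V, \tilde W)$ and reducing mod $I$) gives an exact sequence
\[ 0 \to \HOM_{\O_X}(V, I\otimes_A W) \to \HOM_{\O_{\tilde X}}(\tilde V, \tilde W) \to \HOM_{\O_X}(V, W) \to 0, \]
where surjectivity on the right uses that $\tilde V$ is locally free so $\HOM(\tilde V,-)$ is exact and liftings exist locally. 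The fiber of this surjection over $f$ is exactly the set of liftings $\tilde f'$, and the left-hand term acts on it simply transitively by addition — this is the asserted torsor structure. Compatibility with restriction is immediate since the whole exact sequence is functorial in the open set.

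For the second assertion, specialize $V = W$, $\tilde V = \tilde W$, and $\tilde f = \id_{\tilde V}$. Any lifting of $\id_V$ is of the form $\id_{\tilde V} + \delta$ with $\delta \in \HOM(V, I\otimes_A V)$, where I am tacitly using the inclusion $\HOM(V, I\otimes_A V) \hookrightarrow \END_{\O_{\tilde X}}(\tilde V)$ coming from the sequence above; note $\delta$ has image in $I\otimes \tilde V$, which is an $\O_X$-module since $I$ is killed by $I$. Composing two such liftings gives
\[ (\id_{\tilde V} + \delta_1)\circ(\id_{\tilde V} + \delta_2) = \id_{\tilde V} + \delta_1 + \delta_2 + \delta_1\circ\delta_2, \]
and the cross term $\delta_1 \circ \delta_2$ vanishes because $\delta_2$ lands in $I\otimes \tilde V$ and $\delta_1$, restricted there, factors through multiplication by elements of $I$, so $\delta_1\circ\delta_2 \in I^2 \otimes \tilde V = 0$. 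Hence composition is $\delta_1 + \delta_2$, i.e. addition in $I\otimes_A \END(V)$ under the identification $\HOM(V, I\otimes_A V) \cong I\otimes_A \END(V)$ (valid since $V$ is locally free and $I$ is a finite $A$-module).

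The only genuine subtlety — and the step I would be most careful about — is the identification of $\HOM_{\O_X}(V, I\otimes_A W)$ with the "difference sheaf" of two liftings as genuinely $\O_{\tilde X}$-linear maps, i.e. checking that the difference of two $\O_{\tilde X}$-linear liftings of an $\O_X$-linear map is automatically $I$-valued and $\O_X$-linear rather than merely $\O_{\tilde X}$-linear. This follows because for $\tilde f_1, \tilde f_2$ lifting $f$, the difference $\tilde f_1 - \tilde f_2$ kills $V \bmod I$ hence lands in $I\otimes_A W$, and an $\O_{\tilde X}$-linear map into a module killed by $I$ is the same as an $\O_X$-linear map — so no new maps appear and none are lost. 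Once this is in hand, everything else is the routine bookkeeping above, and since the statement is quoted without proof in the paper I would keep the writeup to roughly the length sketched here.
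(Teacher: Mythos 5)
Your proof is correct. The paper states Lemma \ref{deformmap} explicitly without proof (it is introduced as ``a commonly known fact'' about square-zero extensions), so there is no argument in the paper to compare against; your argument --- the exact sequence $0\to \HOM(V,I\otimes_A W)\to\HOM(\tilde V,\tilde W)\to\HOM(V,W)\to 0$ obtained from local freeness of $\tilde V,\tilde W$ and flatness over $B$, the identification of the kernel as $\O_X$-linear maps because $I\otimes_A W$ is killed by $I$, and the vanishing of the cross term $\delta_1\circ\delta_2$ since $I^2=0$ --- is the standard one and is sound.
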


Let $(V,\phi)\in \H(A)$ and let $(\tilde{V},\tilde{\phi})$ and $(\tilde{V}',\tilde{\phi}')$ be two lifts to $\H(B)$. 
Let $\U_\bullet$ be a cover that trivializes both so that on $\U_i$ we may choose isomorphisms $f_i:\tilde{V}_i \ra \tilde{V}'_i$. 
We may assume that $f_i$ reduces to the identity modulo $I$, sends $L_i^j$ into $L_i^{\prime j}$ for $p_i\in Z$ and all $j$, 
and that the top wedge power of $f_i$ commutes with the trace isomorphisms identifying the determinants of $\tilde{V}$ and $\tilde{V}'$ with $\O(-1)$.

\begin{defn}
  Let $(V,\phi)\in \H_{r,d}(A)$. The bundle $\END(V)$ has a direct summand $\END_0(V)$ of traceless endomorphisms. 
  By $\END_0'(V)$ we denote the sub-bundle of $\END_0(V)$ which consists of sections preserving the subspaces $L_i^j\sub V|_{p_i}$ for $p_i\in Z$.
  In other words, $\END'_0(V)$ is a specific lower modification of $\END_0(V)$.
  \label{fancyenddefn}
\end{defn}

Now consider the map $f_j^{-1}\circ f_i$ on the double overlap $\U_{ij}$:

\begin{center}
\begin{tikzpicture}[descr/.style={fill=white}]
\matrix(m)[matrix of math nodes,
row sep=4em, column sep=4em,
text height=1.5ex, text depth=0.25ex]
{
  0&I\otimes_A V&\tilde{V}& V &0\\
  0&I\otimes_A V&\tilde{V}& V &0\\
};
\path[->,font=\small]
(m-1-1) edge (m-1-2)
(m-1-2) edge (m-1-3)
(m-1-3) edge (m-1-4)
(m-1-4) edge (m-1-5)
(m-2-1) edge (m-2-2)
(m-2-2) edge (m-2-3)
(m-2-3) edge (m-2-4)
(m-2-4) edge (m-2-5)
(m-1-2) edge node[auto] {$I\otimes\mbox{id}_V$} (m-2-2)
(m-1-3) edge node[below,rotate=90] {$\simeq$} (m-2-3)
(m-1-4) edge node[auto] {$\mbox{id}_V$} (m-2-4);
\end{tikzpicture}
\end{center}

Because $\tilde{V}$ has a canonical filtration by $I\otimes_A V$ and $V$, preserved by $f_i$ and $f_j$, 
this isomorphism may be presented as $\mbox{id}_{\tilde{V}} + h_{ij}$ where $h_{ij}$ is a map $V\ra I\otimes_A V$ linear over $\O_{\P^1_R}$. 
Since the morphism $f_j^{-1}\circ f_i$ preserves the subspaces $L_k$ for $1\leq k\leq 4$ and has determinant 1, we have that 
$h_{\bullet,\bullet}$ is an element of $\check C^1(\U,I\otimes\END_0'(V))$. Here the bundle $\END_0'(V)$ deserves some explanation.

While we're at it, it is convenient to calculate the Serre dual of $\END_0'(V)$.
\begin{lemma}
  Let $\S(V)$ be the sub-bundle of $\Omega'\otimes\END_0(V)$ consisting of the twisted endomorphisms which, in the fiber over $p_i$ for $p_i\in Z$,
  are strictly block upper triangular with respect to the filtration of $V|_{p_i}$ given by $0\sub L_i^j\sub V|_{p_i}$. 
  Then this $\S(V)$ is the Serre dual to $\END_0'(V)$.
  \label{fancyendserredual}
\end{lemma}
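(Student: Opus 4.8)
The plan is to prove this by a direct local computation using Serre duality on $\P^1$, treating the parabolic modifications at each point $p_i$ as a twist of the standard duality $\HOM(\END_0(V), \Omega' \otimes \END_0(V))\to\Omega'$ given by the trace pairing. First I would recall that $\END_0(V)$ is self-dual via the trace form, so that the Serre dual of any subsheaf $\mathcal{G}\subset\END_0(V)$ is the sheaf $\Omega'\otimes(\END_0(V)/\mathcal{G}^\perp)$ where $\mathcal{G}^\perp$ is the annihilator of $\mathcal{G}$ under the trace pairing on each fiber. Since $\END_0'(V)$ and $\END_0(V)$ agree away from the finitely many points of $Z$, it suffices to compute, fiber by fiber at each $p_i$, the local modification: one checks that $\END_0'(V)$ is obtained from $\END_0(V)$ by an elementary (lower) modification supported at $p_i$ prescribed by the condition of preserving the flag $0\subset L_i^j\subset V|_{p_i}$.

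The key linear-algebra step is the following fiberwise assertion at a single $p_i$. Let $U=V|_{p_i}$ with its flag $F^\bullet$ (the $L_i^\bullet$), and let $\mathfrak{p}\subset\mathfrak{gl}(U)$ be the parabolic subalgebra of endomorphisms preserving $F^\bullet$, with $\mathfrak{n}\subset\mathfrak{p}$ the nilradical of strictly-upper-triangular endomorphisms. Then under the trace pairing $\mathfrak{gl}(U)\times\mathfrak{gl}(U)\to k$ one has $\mathfrak{p}^\perp=\mathfrak{n}$ and symmetrically $\mathfrak{n}^\perp=\mathfrak{p}$; the same holds after intersecting with $\sl(U)$, since the identity lies in $\mathfrak{p}$ but pairs to zero against $\mathfrak{n}$ (as $\tr$ of a strictly upper triangular matrix is $0$), so passing to the traceless parts is compatible. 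This is the standard fact that the nilradical of a parabolic is the annihilator of the parabolic; I would state it and give the one-line matrix-block verification rather than belaboring it.

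Assembling: globally $\END_0'(V)$ is the subsheaf of $\END_0(V)$ whose fiber at $p_i$ is $\mathfrak{p}\cap\sl$, so $\END_0'(V)^\perp$ (inside $\Omega'\otimes\END_0(V)$, using $\Omega'\otimes\END_0(V)\cong\HOM(\END_0(V),\Omega')$ via trace) is the subsheaf of $\Omega'\otimes\END_0(V)$ whose stalk at $p_i$ is $\Omega'\otimes(\mathfrak{n}\cap\sl)$ — exactly the twisted endomorphisms that are strictly block upper triangular in the fiber over each $p_i$, which is $\S(V)$. Then Serre duality on $\P^1$, $H^i(\P^1,\END_0'(V))\cong H^{1-i}(\P^1,\S(V))^\vee$, follows formally because $\S(V)$ is precisely the kernel of the perfect pairing $\Omega'\otimes\END_0(V)\to\Omega'$ restricted appropriately — more carefully, one uses that for a modification $\mathcal{G}\hookrightarrow\mathcal{E}$ at finitely many points with $\mathcal{E}$ self-dual, the Serre dual of $\mathcal{G}$ is $\Omega'\otimes\mathcal{G}^\perp$, which is the content we just identified.

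The main obstacle is purely bookkeeping: making sure the traceless condition interacts correctly with the parabolic/nilradical duality at each $p_i$ (in particular that removing the trace does not disturb the $\mathfrak{p}^\perp=\mathfrak{n}$ identity), and tracking the $\Omega'$-twist consistently so that the degrees and the local modification lengths match. Once the fiberwise statement $\mathfrak{p}^\perp=\mathfrak{n}$ is in hand, everything else is a formal consequence of Serre duality for coherent sheaves on $\P^1$ together with the self-duality of $\END_0(V)$.
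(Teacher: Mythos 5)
Your proposal is correct and follows essentially the same route as the paper: self-duality of $\END_0(V)$ via the trace pairing, the principle that the linear dual of a lower modification is the corresponding upper modification of the dual, and the fiberwise identity that the trace-annihilator of the parabolic subalgebra preserving the flag $L_i^\bullet$ is its nilradical of strictly block upper triangular endomorphisms (checked to be compatible with passing to traceless parts). One notational slip: the early formula ``the Serre dual of $\mathcal{G}$ is $\Omega'\otimes(\END_0(V)/\mathcal{G}^\perp)$'' would literally be a torsion sheaf supported on $Z$; the statement in your assembling paragraph --- that the dual is the subsheaf of $\Omega'\otimes\END_0(V)$ cut out at each $p_i$ by the condition that the fiber lie in $\mathfrak{n}$ --- is the correct one and is what the computation actually uses.
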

\begin{proof}
  The bundle $\END(V)$ is naturally self dual with pairing given by trace of the product of endomorphisms, 
  and $\END_0(V)$ is a direct summand and orthogonal compliment to the scalar endomorphisms, so it too is self dual.
  The bundle $\END_0'$ fits in a sequence of inclusions:
  \[\END_0(V)(-Z)\inj \END_0'(V)\inj \END_0(V)\]
  The quotient of the last bundle by the first is the product \[\prod_{p_i\in Z} \sl(V|_{p_i})\] 
  The bundle $\END_0'$ corresponds to the sub-space of this quotient consisting of traceless endomorphisms of the fibers 
  that are upper triangular with respect to the filtration provided by the $L_i^j$. 
  Taking sheaf hom to $\Omega$, we get:
  \[\Omega\otimes\END_0(V)\inj \Omega\otimes\END_0'^*(V)\inj \Omega'\otimes\END_0(V)\]
  The quotient of the last bundle by the first is naturally the dual of the fiber of $\END_0(V)$, but we use the self duality of $\END_0(V)$ to get rid of this dual. 
  In general, the linear dual of a lower modification is an upper modification of the dual.
  In our case, the condition defining the upper modification requires that the residue endomorphisms are strictly upper triangular with respect to the filtration given by the $L_i^j$.
  This is because these are the endomorphisms of the fiber orthogonal to those which merely preserve the filtration.
\end{proof}

In fact, $h_{\bullet,\bullet}$ is a closed Cech 1-cycle. 
This can be seen by observing $f_k^{-1}\circ f_j \circ f_j^{-1} \circ f_i = f_k^{-1}\circ f_i$ and applying lemma \ref{deformmap}.

Now suppose that $\tilde{V}$ and $\tilde{V}'$ are equipped with Higgs fields $\tilde{\phi}$ and $\tilde{\phi}'$, respectively.
On the cover $\U_\bullet$ we have 

\begin{center}
\begin{tikzpicture}[descr/.style={fill=white}]
\matrix(m)[matrix of math nodes,
row sep=4em, column sep=4em,
text height=1.5ex, text depth=0.25ex]
{
  \tilde{V}&\Omega'\otimes\tilde{V}\\
  \tilde{V}'&\Omega'\otimes\tilde{V}'\\
};
\path[->,font=\small]
(m-1-1) edge node[auto] {$\tilde{\phi}$} (m-1-2)
(m-1-1) edge node[left] {$f_\bullet$} (m-2-1)
(m-1-2) edge node[auto] {$\id_{\Omega'}\otimes f_\bullet$} (m-2-2)
(m-2-1) edge node[auto] {$\tilde{\phi}'$} (m-2-2);
\end{tikzpicture}
\end{center}

Note, this diagram does not have to commute!  
By $f_\bullet^*\tilde{\phi}'$ denote $\id_{\Omega'}\otimes f_\bullet\circ \tilde{\phi}'\circ f_\bullet$ 
and denote by $\delta_\bullet$ the difference $f_\bullet^*\tilde{\phi}' - \tilde{\phi}$.
Taking $(\tilde{V},\tilde{\phi})$ as our basepoint, $(\tilde{V}',\tilde{\phi}')$ is described by the 1-cocycle $h_{\bullet,\bullet}$ 
and the difference $f_\bullet^*\tilde{\phi}' - \tilde{\phi}$ considered as a Higgs field on $\tilde{V}$ restricted to the cover $\U_\bullet$. 
Both $f_\bullet^*\tilde{\phi}'$ and $\tilde{\phi}$ preserve the filtration $L_i^j$ so are well defined on $L_i^j/L_i^{j+1}$.
Both endomorphisms are scalar\textemdash they scale by the $j+1$st from last eigenvalue at the point $p_i$.
So the difference $\delta_\bullet$ both preserves the filtration and vanishes on the successive quotients.
This is the description of the sheaf $\S(V)\iso \Omega\otimes\END'^*(V)$.
Since both Higgs fields are trace zero and agree modulo $I$, we have $\delta_\bullet\in \check C^0(\U,S(V)\otimes_A I)$.

We need to express the requisite coherence between the data $h_{i,j}$ (defining the bundle $\tilde{V}'$ as glued together from $\tilde{V}$) 
and the data $\delta_i = f_i^*\tilde{\phi}' - \tilde{\phi}$. 
We must ensure that the choices $\delta_\bullet$ yield a well defined Higgs field on the bundle $\tilde{V}'$.
This is a statement on double overlaps of $\U_\bullet$.

\begin{center}
\begin{tikzpicture}[descr/.style={fill=white}]
\matrix(m)[matrix of math nodes,
row sep=5em, column sep=5em,
text height=1.5ex, text depth=0.25ex]
{
  (\tilde{V}_i)_j&\Omega'\otimes(\tilde{V}_i)_j\\
  \tilde{V}_{ij}&\Omega'\otimes\tilde{V}_{ij}\\
  (\tilde{V}_j)_i&\Omega'\otimes(\tilde{V}_j)_i\\
};
\path[->,font=\small]
(m-1-1) edge[bend right] node[left] {$1+h_{ij}$} (m-3-1)
(m-1-2) edge[bend left] node[right] {$\id_{\Omega'}\otimes(1+h_{ij})$} (m-3-2)
(m-1-1) edge node[auto] {$\tilde{\phi}_i+\delta_i$} (m-1-2)
(m-1-1) edge node[left] {$f_i$} (m-2-1)
(m-1-2) edge node[auto] {$\id_{\Omega'}\otimes f_i$} (m-2-2)
(m-2-1) edge node[left] {$f_j$} (m-3-1)
(m-2-2) edge node[auto] {$\id_{\Omega'}\otimes f_j$} (m-3-2)
(m-2-1) edge node[auto] {$\tilde{\phi}'$} (m-2-2)
(m-3-1) edge node[auto] {$\tilde{\phi}_j+\delta_j$} (m-3-2);
\end{tikzpicture}
\end{center}

In order for the as yet hypothetical $\tilde{\phi}'$ to be well defined, it is enough to ensure the commutativity of the outside square of the above diagram. 
So the $\delta_\bullet$ and $h_{ij}$ must satisfy the equation
\[(\id_{\Omega'}\otimes(1+h_{ij}))\circ (\tilde{\phi}_i+\delta_i)=(\tilde{\phi}_i+\delta_i)\circ(1+h_{ij})\]
which, using the lemma (\ref{deformmap}), reduces to
\[\tilde{\phi}_i+\id_{\Omega'}\otimes h_{ij}\circ \tilde{\phi_i} + \delta_i = \tilde{\phi}_j+\delta_j + \tilde{\phi}_j\circ h_{ij}\]
Since $\tilde{\phi}$ is a well defined Higgs field on $\tilde{V}$, $\tilde{\phi}_i = \tilde{\phi}_j$.
Now
\[\id_{\Omega'}\otimes h_{ij}\circ \tilde{\phi_i} + \delta_i = \delta_j + \tilde{\phi}_j\circ h_{ij}\]
is an equality of two maps $(\tilde{V}_i)_j\ra \Omega'\otimes(\tilde{V}_i)_j$.
Both maps have image inside $\Omega'\otimes(\tilde{V}_i)_j\otimes_B I$ and hence both are also well defined with domain $(V_i)_j = (\tilde{V}_i)_j/I$. Rewriting we have
\begin{align*}
id_{\Omega'}\otimes h_{ij}\circ \phi_i + \delta_i &= \delta_j + \id_I\otimes \phi_j\circ h_{ij}\\
\delta_i - \delta_j &= \id_I\otimes \phi_j\circ h_{ij} - id_{\Omega'}\otimes h_{ij}\circ \phi_i\\
\delta_i - \delta_j &= [\phi_{ij},h_{ij}]
\end{align*}

What we have are classes $h_{\bullet,\bullet}$ in $\check C^1(\U,\END'_0(V)\otimes I)$ and $\delta_\bullet$ in $\check C^0(\U,\END_0'^*(V)\otimes \Omega\otimes I)$ 
such that $h_{\bullet,\bullet}$ is closed and $d(\delta_\bullet) = [\phi,h_{\bullet,\bullet}]$.
\begin{lemma}
  The set of deformations $(\tilde{V},\tilde{\phi})$ to $(V,\phi)$ is a torsor for the hypercohomology group 
  \[\Hyp^1\left[\END_0'(V)\otimes I \ra \END_0'^*(V)\otimes \Omega\otimes I\right]\]
  where the differential of the complex is given by $[\phi,-]$.
  \label{deformationcomplex}
\end{lemma}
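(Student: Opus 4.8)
The plan is to package the cocycle computation carried out above into a clean Čech-hypercohomology statement. Recall the two-term complex
\[
C^{\bullet}(V)\;:\;\END_0'(V)\otimes I\;\xrightarrow{\;[\phi,-]\;}\;\END_0'^{*}(V)\otimes\Omega\otimes I,
\]
concentrated in degrees $0$ and $1$, where the differential sends $\psi\mapsto[\phi,\psi]=\id_{\Omega'}\otimes\psi\circ\phi-\phi\circ\psi$ (this lands in $\S(V)\otimes I\iso\END_0'^{*}(V)\otimes\Omega\otimes I$ because any endomorphism preserving the $L_i^j$ produces, after commutation with $\phi$, a twisted endomorphism that is strictly block upper triangular on the fibers over $Z$, by the scalarity argument already given before the statement). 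The total Čech complex computing $\Hyp^{\bullet}(C^{\bullet})$ with respect to a cover $\U_\bullet$ has, in total degree $1$, the group $\check C^{1}(\U,\END_0'(V)\otimes I)\oplus\check C^{0}(\U,\END_0'^{*}(V)\otimes\Omega\otimes I)$, and a pair $(h_{\bullet\bullet},\delta_\bullet)$ is a total $1$-cocycle precisely when $h_{\bullet\bullet}$ is Čech-closed and $d(\delta_\bullet)=[\phi,h_{\bullet\bullet}]$ — which is exactly the pair of conditions extracted above.

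The argument then has three parts. First, the construction just carried out assigns to any lift $(\tilde V',\tilde\phi')$ (after choosing trivializing isomorphisms $f_\bullet$ reducing to the identity, preserving the flags $L_i^j$ and compatible with the determinant trivializations) the total $1$-cochain $(h_{\bullet\bullet},\delta_\bullet)$, and we have checked it is a cocycle. Second, I would check that changing the choice of $f_\bullet$ changes $(h_{\bullet\bullet},\delta_\bullet)$ by a total coboundary: a different choice $f_\bullet'$ differs from $f_\bullet$ by an automorphism of $\tilde V_\bullet$ of the form $1+g_\bullet$ with $g_\bullet\in\check C^{0}(\U,\END_0'(V)\otimes I)$ (again using Lemma \ref{deformmap} and the flag/determinant normalizations), and a direct computation — identical in spirit to the double-overlap computation above — shows $h_{\bullet\bullet}$ changes by $d g_\bullet$ and $\delta_\bullet$ changes by $-[\phi,g_\bullet]=-d_C(g_\bullet)$, i.e. by the image of $g_\bullet$ under the total differential. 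Hence the class in $\Hyp^{1}(C^{\bullet})$ is well-defined, independent of all choices, and depends only on the pair of lifts $\bigl((\tilde V,\tilde\phi),(\tilde V',\tilde\phi')\bigr)$.

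Third, I would verify the torsor axioms. For freeness, if two lifts give the same hypercohomology class then, after adjusting $f_\bullet$ by a suitable $1+g_\bullet$, we may assume $(h_{\bullet\bullet},\delta_\bullet)=(0,0)$; but $h_{\bullet\bullet}=0$ means the $f_\bullet$ glue to a global isomorphism $\tilde V\riso\tilde V'$, and $\delta_\bullet=0$ means this isomorphism intertwines $\tilde\phi$ and $\tilde\phi'$, so the two lifts are isomorphic (as objects of $\H(B)$, the flag and determinant compatibilities being built into the $f_\bullet$). For transitivity/simple transitivity, given a lift $(\tilde V,\tilde\phi)$ and a total $1$-cocycle $(h_{\bullet\bullet},\delta_\bullet)$, I would build a new lift: $h_{\bullet\bullet}$ closed lets one reglue $\tilde V$ along $\U_\bullet$ by $1+h_{\bullet\bullet}$ to get a bundle $\tilde V'$ (the flag subbundles $L_i^j$ and the determinant trivialization persist because $h_{\bullet\bullet}$ takes values in $\END_0'$), and the relation $d(\delta_\bullet)=[\phi,h_{\bullet\bullet}]$ is exactly the cocycle condition needed for $\tilde\phi+\delta_\bullet$ to descend to a global Higgs field $\tilde\phi'$ on $\tilde V'$ (its residues still lie in the prescribed conjugacy classes since $\delta_\bullet$ vanishes on the successive quotients $L_i^j/L_i^{j+1}$). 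One checks the class of $\bigl((\tilde V,\tilde\phi),(\tilde V',\tilde\phi')\bigr)$ is the given cocycle, completing simple transitivity. I expect the main obstacle to be the bookkeeping in the second step — proving choice-independence cleanly — since one must track simultaneously the reglueing data $h$, the Higgs-field correction $\delta$, and the three normalization conditions on $f_\bullet$ (reduction mod $I$, flag-preservation, determinant-compatibility) and confirm that a change of $f_\bullet$ moves $(h,\delta)$ by precisely a total coboundary and nothing more; the non-emptiness of the torsor (that some lift exists at all) is not asserted by this lemma and is deferred to the obstruction statement in $\Hyp^{2}$.
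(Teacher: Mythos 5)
Your proposal is correct and follows essentially the same route as the paper: the paper derives the pair $(h_{\bullet,\bullet},\delta_\bullet)$, the closedness of $h_{\bullet,\bullet}$, and the relation $d(\delta_\bullet)=[\phi,h_{\bullet,\bullet}]$ in the discussion immediately preceding the lemma, and then states the lemma as a summary of that Čech computation. In fact you are somewhat more thorough than the paper, which leaves the independence of the class from the choice of $f_\bullet$ and the verification of the torsor axioms implicit; your second and third steps supply exactly those checks, and your identification of the image of $[\phi,-]$ inside $\S(V)\iso\END_0'^{*}(V)\otimes\Omega$ matches Lemma \ref{fancyendserredual}.
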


The argument we've just given relied on the existence of an extension $(\tilde{V},\tilde{\phi})$, but it is far from clear that such a thing should exist.  
Trying to prove existence of a deformation we fine:

\begin{lemma}
  There exists a deformation to $(V,\phi)$ iff a certain class in
  \[\Hyp^2(\END_0'(V)\otimes I \ra \END_0'^*(V)\otimes \Omega\otimes I)\]
  vanishes.
  \label{obstructions}
\end{lemma}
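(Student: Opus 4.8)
The plan is to mimic the classical obstruction-theory argument for deformations of pairs (sheaf plus morphism), packaged via Čech cochains on the cover $\U_\bullet$. The point is that in the previous lemma we computed what happens \emph{given} an extension $(\tilde V,\tilde\phi)$; now I must produce the obstruction that measures the failure of such an extension to exist. So first I would choose, on each $\U_i$, a local extension $\tilde V_i$ of $V|_{\U_i}$ together with a local lift $\tilde\phi_i$ of $\phi|_{\U_i}$ to a map $\tilde V_i\ra\Omega'\otimes\tilde V_i$ preserving the local analogue of the filtration data $L_i^j$ and of trace zero; these exist because locally everything is free and the filtration/trace conditions are linear, so there is no local obstruction. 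On double overlaps $\U_{ij}$ the two extensions $\tilde V_i$ and $\tilde V_j$ are (non-canonically) isomorphic by some $g_{ij}=\id+h_{ij}$ with $h_{ij}\in\check C^1(\U,\END_0'(V)\otimes I)$ by Lemma \ref{deformmap}, and the discrepancy between $g_{ij}^*\tilde\phi_j$ and $\tilde\phi_i$ is a $\delta_{ij}\in\check C^1(\U,\S(V)\otimes I)$, using Lemma \ref{fancyendserredual} to identify the relevant sheaf. The pair $(h_{\bullet\bullet},\delta_{\bullet\bullet})$ is the candidate obstruction cocycle.

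Second, I would check that $(h_{\bullet\bullet},\delta_{\bullet\bullet})$ is a $2$-cocycle in the total (Čech–hypercohomology) complex of
\[\left[\END_0'(V)\otimes I \ra \END_0'^*(V)\otimes \Omega\otimes I\right],\]
where the horizontal differential is $[\phi,-]$. Concretely, on triple overlaps $\U_{ijk}$ the composite $g_{ik}\i g_{jk}g_{ij}$ is an automorphism of $\tilde V_i|_{\U_{ijk}}$ reducing to the identity, hence given by an element of $\END_0'(V)\otimes I$, and its Čech coboundary being zero is the statement that $\check\delta h=0$; simultaneously the compatibility computation done just before Lemma \ref{deformationcomplex} — now read on triple overlaps — gives $\check\delta\delta=[\phi,h]$ up to sign. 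Together these say exactly that $(h,\delta)$ is a cocycle of total degree $2$, so it defines a class in $\Hyp^2$.

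Third, I would show this class is independent of the choices (of cover, of local extensions $\tilde V_i$, and of local lifts $\tilde\phi_i$): changing $\tilde V_i$ by an automorphism congruent to the identity, or changing $\tilde\phi_i$ within its allowed affine space, alters $(h,\delta)$ by a total coboundary, by another application of Lemma \ref{deformmap} and the same bracket bookkeeping as in the proof of Lemma \ref{deformationcomplex}. Hence the class in $\Hyp^2$ is a genuine invariant of $(V,\phi)$ and $I$. Finally, the \emph{iff}: if the class vanishes, one can adjust the $g_{ij}$ and $\tilde\phi_i$ so that the $g_{ij}$ satisfy the cocycle condition and glue the $\tilde V_i$ into a global $\tilde V$, and the corrected $\tilde\phi_i$ agree on overlaps to give a global $\tilde\phi$; conversely, if a global $(\tilde V,\tilde\phi)$ exists then restricting it gives local data for which the obstruction cocycle is visibly a coboundary, so the class is zero. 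One should also remark that the determinant/trivialization condition $\iota$ and the residue-eigenvalue conditions are automatically inherited because we built everything out of trace-zero, filtration-preserving data, so no extra obstruction lives in a determinant or residue term.

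I expect the main obstacle to be purely organizational rather than conceptual: carefully setting up the sign conventions and the total-complex differential so that the two separate conditions ($\check\delta h = 0$ and $\check\delta\delta - [\phi,h]=0$) really assemble into "$(h,\delta)$ is a $2$-cocycle" for one explicit double complex, and likewise checking the independence-of-choices step alters it by the corresponding total coboundary. The subtle point to get right is that the horizontal map must land in $\S(V)\otimes\Omega\otimes I$ (not the bigger $\Omega'\otimes\END_0(V)\otimes I$): one must verify, as in the discussion preceding Lemma \ref{deformationcomplex}, that $[\phi,h_{ij}]$ indeed preserves each filtration $L_i^j$ and vanishes on the successive quotients, which is what makes the bracket land in the correct lower/upper modification and hence makes the complex from Lemma \ref{deformationcomplex} the right one.
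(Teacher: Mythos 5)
Your overall strategy is exactly the paper's: choose local deformations $(\tilde V_i,\tilde\phi_i)$ (which exist with the filtration, trace, and determinant conditions built in), measure the failure to glue by Cech data valued in the two-term complex $\END_0'(V)\otimes I\ra \S(V)\otimes I$ with differential $[\phi,-]$, check the class is independent of choices, and argue that it vanishes iff the local data can be corrected into a global deformation. The independence-of-choices step, the ``iff'' direction, and the remark that one must verify the bracket lands in the lower modification $\S(V)$ all match the paper's proof.

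However, there is a degree error in your identification of the obstruction cocycle, and as written the construction does not produce a class in $\Hyp^2$. A total degree-two cochain for the complex $[\END_0'(V)\otimes I\ra \S(V)\otimes I]$ is a pair consisting of a Cech \emph{2-cochain} valued in $\END_0'(V)\otimes I$ and a Cech 1-cochain valued in $\S(V)\otimes I$. The correct first component is the triple-overlap discrepancy $\partial_{ijk}$, defined by $g_{ki}\circ g_{jk}\circ g_{ij}=\id+\partial_{ijk}$ with $\partial_{\bullet,\bullet,\bullet}\in \check C^2(\U,\END_0'(V)\otimes I)$ --- not the 1-cochain $h_{\bullet,\bullet}$. (Your $\delta_{\bullet,\bullet}$ is the paper's $\eta_{\bullet,\bullet}$ and is in the right spot.) Correspondingly, the cocycle conditions you state do not typecheck: ``$dh=0$'' would say the transition maps already satisfy the cocycle condition, i.e.\ that there is no obstruction from the bundle at all, and in ``$d\delta=[\phi,h]$'' the left-hand side is a 2-cochain while the right-hand side is a 1-cochain. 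The identities that actually need verifying are $d(\partial_{\bullet,\bullet,\bullet})=0$ and $d(\eta_{\bullet,\bullet})=[\phi,\partial_{\bullet,\bullet,\bullet}]$, which is the triple-overlap computation the paper carries out. (A related small imprecision: $g_{ij}=\id+h_{ij}$ does not literally make sense for a map between the two distinct extensions $\tilde V_i$ and $\tilde V_j$; one should say, as the paper does, that such maps form a torsor under $\END_0'(V)\otimes I$.) Once the first component is replaced by $\partial_{\bullet,\bullet,\bullet}$ and the cocycle conditions corrected, the rest of your argument goes through as in the paper; it is also worth noting, as the paper does, that on a curve the class of $\partial_{\bullet,\bullet,\bullet}$ alone is automatically trivial, so the real content of the obstruction is carried by $\eta_{\bullet,\bullet}$.
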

\begin{proof}
  Locally on $\P^1_B$, there exist deformations of $V$.  Furthermore, these deformations can be chosen with $\tilde{L}_i^j\sub \tilde{V}|_{p_i}$ extending $L_i^j$.
  Additionally we can pick local isomorphisms to $\O(d)$.
  For example, just pick a cover which trivializes both $V$ and $\O(d)$ and which contains at most one of the $p_i$ each open set.
  Now we can assume that each $\tilde{L}_i^j$ sits as an initial coordinate subspace.  Hence a deformation may be picked in a trivial way.

  Not only are local deformations sure to exist, but they are all equivalent to the deformation described above on a fine enough cover.
  Hence we may pick the trivial local deformations described above on a fine enough cover $\U_\bullet$,
  and the obstruction to glueing the $(\tilde{V}_i,\tilde{\phi}_i)$ into a global deformation will be the obstruction to having any global deformations at all.

  Having chosen the local deformations of the bundle and of the Higgs field, pick 
  \[\psi_{ij}:\tilde{V_i}|_{\U_{ij}}\ra \tilde{V_j}|_{\U_{ij}}\]
  such that
  \begin{enumerate}
    \item $\psi_{ij}$ is the identity on $V$
    \item $\psi_{ij}$ stabilizes all the $\tilde{L}_i^j$.
    \item $\det \psi_{ij} = 1$ with respect to the local isomorphisms $\det \tilde{V_i} \iso \det \tilde{V_j} \iso \O(d)$
  \end{enumerate}
  Such maps are a torsor for the subgroup of $\Aut(\tilde{V_j}|_{\U_{ij}})$ of automorphisms satisfying the three properties required of $\psi_{ij}$.
  Such an automorphism differs from the identity by $\sigma_{ij}$ a section of $\END_0'(\tilde{V_j})\otimes_B I\iso \END_0'(V)\otimes_A I$. 
  In similar fashion, these maps are a torsor for a subgroup of $\Aut(\tilde{V_i}|_{\U_{ij}})$ and in turn can be identified with a section of $\END_0'(V)\otimes_A I$.
  Suppose that we have 
  \begin{align*}
    \psi_{ij}' &= (1+\sigma)\circ \psi_{ij}\\
    &= \psi_{ij}\circ \psi_{ij}\i \circ (1+\sigma) \circ \psi_{ij}
  \end{align*}
  This is $\psi_{ij}$ pre-composed by the automorphism $\psi_{ij}\i \circ (1+\sigma) \circ \psi_{ij}$ of $\tilde{V_i}$. 
  Since $\psi_{ij}$ is $1$ modulo $I$ and since $\sigma$ is annihilated by $I$, this automorphism is still $1+\sigma$.
  So the maps we are interested in are always a torsor for $\END_0'(V)$,
  and it doesn't matter whether this torsorial action is computed on the domain or codomain of the isomorphism $\psi_{i,j}$.

  Fixing our choice of $\psi_{\bullet,\bullet}$, we define
  \[\part_{ijk} = \psi_{ki}\circ \psi_{jk}\circ \psi_{ij}\]
  The maps $\part_{ijk}$ taken in total comprise a two cycle valued in the sheaf $\END_0'(V)\otimes_A I$

  \begin{center}
  \begin{tikzpicture}[descr/.style={fill=white}]
  \matrix(m)[matrix of math nodes,
  row sep=5em, column sep=5em,
  text height=1.5ex, text depth=0.25ex]
  {
    \tilde{V_j}&\tilde{V_k}&\tilde{V_j}&\tilde{V_k}\\
    \tilde{V_i}&\tilde{V_l}&\tilde{V_i}&\tilde{V_l}\\
  };
  \path[->,font=\small]
  (m-2-1) edge (m-1-1)
  (m-1-1) edge (m-1-2)
  (m-1-2) edge (m-2-2)
  (m-2-1) edge (m-2-2)
  (m-2-1) edge (m-1-2)
  (m-2-3) edge (m-1-3)
  (m-1-3) edge (m-1-4)
  (m-1-4) edge (m-2-4)
  (m-2-3) edge (m-2-4)
  (m-1-3) edge (m-2-4);
  \end{tikzpicture}
  \end{center}
  This is because (consulting the above diagram):
  \begin{align*}
    \part_{ijk}+\part_{ikl} &= \psi_{li}\circ\psi_{kl}\circ\psi_{ik}\circ\psi_{ki}\circ\psi_{jk}\circ\psi_{ij}\\
    &=\psi_{li}\circ\psi_{jl}\circ\psi_{ij}\circ\psi_{ji}\circ\psi_{lj}\circ\psi_{kl}\circ\psi_{jk}\circ\psi_{ij}\\
    &=\psi_{ij}\i \circ\part_{jkl} \circ\psi_{ij}\i + \part_{ijl}
  \end{align*}
  Any other choice $\psi_{\bullet,\bullet}'$ differs by post-composition by $1+\sigma_{ij}$, 
  and the resulting set of $\part_{\bullet,\bullet,\bullet}'$ will differ by the Cech boundary of the $\sigma_{\bullet,\bullet}$.
  The $\psi_{ij}$ provide glueing data iff $\part_{\bullet,\bullet,\bullet} = 0$, hence there exists a consistent choice of glueing data iff $\part$ is of trivial cohomological class.
  Since we work with a curve, this will always be satisfied.

  Now we also pick Higgs fields $\tilde{\phi_i}$ locally extending the Higgs field $\phi$. This choice of 
  \[\tilde{\phi_i}:\tilde{V_i}\ra \Omega'\otimes\tilde{V_i}\]
  is a torsor for the local sections of $\Omega\otimes \END_0'^*(V)$ as computed in the proof of Lemma (\ref{deformationcomplex}).
  \begin{center}
  \begin{tikzpicture}[descr/.style={fill=white}]
  \matrix(m)[matrix of math nodes,
  row sep=5em, column sep=5em,
  text height=1.5ex, text depth=0.25ex]
  {
    \tilde{V_i}&\Omega'\otimes \tilde{V_i}\\
    \tilde{V_j}&\Omega'\otimes \tilde{V_j}\\
  };
  \path[->,font=\small]
  (m-1-1) edge node[auto] {$\phi_i + \delta_i$} (m-1-2)
  (m-2-1) edge node[auto] {$\phi_j + \delta_j$} (m-2-2)
  (m-1-1) edge node[left] {$\psi_{ij}+\sigma_{ij}$} (m-2-1)
  (m-1-2) edge node[auto] {$\psi_{ij}+\sigma_{ij}$} (m-2-2);
  \end{tikzpicture}
  \end{center}
  Consider the commutator $\eta_{ij} = \psi_{ij}\circ \phi_i - \phi_j\circ \psi_{ij}$.
  This is a morphism $\tilde{V_i}|_{\U_{ij}} \ra \Omega'\otimes\tilde{V_j}|_{\U_{ij}}$.
  Following the usual argument, these difference must lie in $\Omega\otimes\END_0'^*(V)\otimes_A I$:
  In the fiber at $p_k\in Z$, both terms act by the same scalar on the successive quotients $L_k^\bullet/L_k^{\bullet+1}$.
  Hence the difference has residues at $p_k$ which are strictly block upper triangular with respect to the flag given by $\tilde{L_k}$.
  The terms also agree modulo $I$, as both are deformations of the Higgs field $\phi$.

  Now suppose that $\psi_{ij}$ is altered by
  \[\sigma_{ij}\in \Gamma(\U_{ij},\END_0'V\otimes_A I)\]
  and that $\phi_i$ is altered by
  \[\delta_i\in \Gamma(\U_i,\Omega\otimes\END_0'^*\otimes_A I)\]
  We compute the resulting change to $\eta_{ij}$:
  \begin{align*}
    (\psi_{ij} + \sigma_{ij})(\phi_i + \delta_i) - (\phi_j+\delta_j)(\psi_{ij}+\sigma_{ij})&=\eta_{ij}+\psi_{ij}\delta_i - \delta_j\psi_{ij} + \sigma_{ij}\phi_i - \phi_j\sigma_{ij}\\
    &= \eta_{ij}+\delta_i - \delta_j + \sigma_{ij}\phi_i - \phi_j\sigma_{ij}
  \end{align*}
  The second equality is because $\psi_{ij}$ is the identity of $V$ modulo $I$. Since both $\phi_i$ and $\phi_j$ are both $\phi$ modulo $I$, we have
  \[\eta'_{ij} = \eta_{ij} + (\delta_i - \delta_j) + [\phi,\sigma_{ij}]\]
  \begin{center}
  \begin{tikzpicture}[descr/.style={fill=white}]
  \matrix(m)[matrix of math nodes,
  row sep=3.5em, column sep=2.5em,
  text height=1.5ex, text depth=0.25ex]
  {
    \check C^3(\END V)&\\
    \part_{\bullet,\bullet,\bullet}\in\check C^2(\END V)&\check C^2(\Omega'\otimes\END V)\\
    \check C^1(\END V)&\check C^1(\Omega'\otimes\END V)\ni\eta_{\bullet,\bullet}\\
    \check C^0(\END V)&\check C^0(\Omega'\otimes\END V)\\
  };
  \path[->,font=\small]
  (m-2-1) edge (m-2-2)
  (m-3-1) edge (m-3-2)
  (m-4-1) edge (m-4-2)
  (m-4-1) edge (m-3-1)
  (m-3-1) edge (m-2-1)
  (m-2-1) edge (m-1-1)
  (m-4-2) edge (m-3-2)
  (m-3-2) edge (m-2-2);
  \end{tikzpicture}
  \end{center}
  We conclude that the obstruction to simultaneously deforming $V$ and $\phi$ lies in the vanishing of the hyper cohomology class $
  (\part_{\bullet,\bullet,\bullet},\eta_{\bullet,\bullet})$ if we can ensure that the pair is actually closed in hyper cohomology. 
  We have already seen that $\part_{\bullet,\bullet,\bullet}$ is a cocycle in the cohomology of $\END_0'(V)$, 
  so all that remains to check is that $[\phi,\part_{\bullet,\bullet,\bullet}] = d(\eta_{ij})$.

  Consider a triple overlap $\U_{ijk}$. Here the element $\eta_{ij}$ can be written $\psi_{jk}\psi_{ij}\phi_i - \psi_{jk}\phi_j\psi_{ij}$ because $\psi_{jk}$ is $1$ modulo $I$.
  Likewise $\eta_{jk} = \psi_{jk}\phi_j\psi_{ij} - \phi_k\psi_{jk}\psi_{ij}$ and $\eta_{ik} = \psi_{ik}\phi_i - \phi_k\psi_{ik}$.
  So we compute the Cech boundary
  \begin{align*}
    d(\eta_{\bullet,\bullet}|_{\U_{ijk}}) &= \eta_{ij} + \eta_{jk} - \eta_{ik}\\
    &= \psi_{jk}\psi_{ij}\phi_i - \psi_{jk}\phi_j\psi_{ij} + \psi_{jk}\phi_j\psi_{ij} - \phi_k\psi_{jk}\psi_{ij} - \psi_{ik}\phi_i + \phi_k\psi_{ik}\\
    &= \psi_{jk}\psi_{ij}\phi_i - \phi_k\psi_{jk}\psi_{ij} - \psi_{ik}\phi_i + \phi_k\psi_{ik}\\
    &= (\psi_{jk}\psi_{ij} - \psi_{ik})\phi_i - \phi_k(\psi_{jk}\psi_{ij} - \psi_{ik})\\
    &= [\phi,\psi_{jk}\psi_{ij} - \psi_{ik}]\\
    &= [\phi,\part_{ijk}]
  \end{align*}
  The second to last equality above is justified because $\phi_i$ and $\phi_k$ are $\phi$ modulo $I$.

  Thus we can rightly say that all the choices for $\psi_{\bullet,\bullet}$ and $\phi_\bullet$ can be made in a consistent way leading to a globally defined deformation of $(V,\phi)$ 
  iff the class $(\part,\eta)$ in hypercohomology vanishes.
\end{proof}

Denote by $C$ the complex \[C = \END_0'(V) \ra \END_0'^*(V)\otimes \Omega\] with differential given by $[\phi,-]$. 
The first hypercohomology of the complex $C\otimes_A I$ describes the torsor of deformations, 
and the second hypercohomology of the same is the space of obstructions to existence of a deformation.
Hence we are interested in the following cohomological result:
\begin{lemma}
  The hypercohomology of $C$ is concentrated in degree one and is a free $A$ module of rank two.
  \label{computedeformations}
\end{lemma}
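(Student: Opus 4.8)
The plan is to compute the hypercohomology of $C$ by relating it to ordinary sheaf cohomology via the standard hypercohomology spectral sequence, then to pin down the answer using Euler characteristics and a vanishing argument at the top of the complex. Since $C$ is a two-term complex concentrated in degrees $0$ and $1$, the hypercohomology sits in a long exact sequence
\[
0\ra \Hyp^0(C)\ra H^0(\END_0'(V))\xrightarrow{[\phi,-]} H^0(\S(V))\ra \Hyp^1(C)\ra H^1(\END_0'(V))\xrightarrow{[\phi,-]} H^1(\S(V))\ra \Hyp^2(C)\ra 0,
\]
where I write $\S(V) = \END_0'^*(V)\otimes\Omega$ as identified in Lemma \ref{fancyendserredual}. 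So the whole computation reduces to understanding the two maps $[\phi,-]$ on $H^0$ and $H^1$, together with the four cohomology groups of the sheaves $\END_0'(V)$ and $\S(V)$ on $\P^1_k$.

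First I would handle the degree-zero end. I claim $\Hyp^0(C)=0$, i.e. $[\phi,-]\colon H^0(\END_0'(V))\ra H^0(\S(V))$ is injective. A global section of $\END_0'(V)$ in the kernel is a traceless endomorphism of $V$ commuting with $\phi$ and preserving the parabolic flags; by stability of $(V,\phi)$ (stability against $\phi$-invariant sub-bundles, as in the hypothesis of Lemma \ref{victoryforfields}), the endomorphism ring of $(V,\phi)$ is a division algebra, hence consists only of scalars, and the traceless ones vanish. Dually I would argue $\Hyp^2(C)=0$: by the Serre-duality identification of Lemma \ref{fancyendserredual}, the map $H^1(\END_0'(V))\ra H^1(\S(V))$ is Serre-dual to the degree-zero map $H^0(\END_0'(V))\ra H^0(\S(V))$ (the differential $[\phi,-]$ is self-adjoint up to sign under the trace pairing, since $\tr([\phi,a]b) = -\tr(a[\phi,b])$), so its cokernel $H^1(\S(V))/\mathrm{im}$ is dual to the kernel of the degree-zero map, which we just showed is zero. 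Hence $\Hyp^0(C)=\Hyp^2(C)=0$ and the hypercohomology is concentrated in degree one.

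It remains to compute the rank of the free $A$-module $\Hyp^1(C)$. Since everything is flat over $A$ and the formation of hypercohomology commutes with the (flat) base change to the residue field — here one uses that the complex $C$ has no higher cohomology in degrees $0$ and $2$ even after base change, by the same stability argument applied to the $k$-fiber, which is legitimate because Lemma \ref{victoryforfields} tells us the $k$-fiber is a stable object of $\H_{r,d}(k)$ — it suffices to compute $\dim_k \Hyp^1(C\otimes_A k)$. By the long exact sequence with vanishing ends, $\dim\Hyp^1(C) = \chi(\S(V)) - \chi(\END_0'(V)) = -\chi(\END_0'(V))+\chi(\S(V))$, and by Serre duality $\chi(\S(V)) = -\chi(\END_0'(V))$, so $\dim\Hyp^1(C) = -2\chi(\END_0'(V))$. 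Thus I reduce to the single Riemann–Roch computation $\chi(\END_0'(V)) = -1$ on $\P^1$. For this I would use the sequence $\END_0(V)(-Z)\inj \END_0'(V)\inj \END_0(V)$ from Lemma \ref{fancyendserredual}: $\chi(\END_0(V))$ is computed from $\deg\END_0(V)=0$ and $\mathrm{rk}=\Theta^2r^2-1$, and the parabolic modification subtracts, at each $p_i$, the codimension of the flag-preserving endomorphisms inside $\sl_{\Theta r}$, i.e. the dimension of the strictly-upper-triangular-with-respect-to-$L_i^\bullet$ part. The main obstacle is this last bookkeeping: one must check, case by case (I–IV), that the block sizes dictated by Table 1 and the multiplicities $c_i$ make these codimensions sum to exactly $\mathrm{rk}(\END_0(V))$, so that $\chi(\END_0'(V)) = \chi(\END_0(V)) - \sum_i (\text{codim}) = -1$. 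I expect this to drop out of the same numerics that made the moduli spaces two-dimensional in \cite{simpson}, but verifying it cleanly in all four cases is where the real work lies.
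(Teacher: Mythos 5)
Your proposal is correct and follows essentially the same route as the paper: kill $\Hyp^0$ by stability of $(V,\phi)$, kill $\Hyp^2$ by Serre duality via Lemma \ref{fancyendserredual} and the self-adjointness of $[\phi,-]$, get the rank from $\chi(\END_0'(V))=-1$, and then pass from the residue field back to $A$ (the paper packages this last step as Lemma \ref{nicelemma}; note the restriction to $\kappa(x)$ is not flat base change, but the derived-fiber argument you gesture at is the right one). The only piece you defer --- the case-by-case check that the parabolic modification drops $\chi$ from $0$ to $-1$ --- is likewise asserted without detail in the paper.
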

\begin{proof}
  We'll apply proper base change. Let $x$ be the closed point of the scheme $\Spec A$. 
  Since $C$ is composed of vector bundles on $\P^1_A$, the restriction to $\P^1_x$ is underived
  and we turn to computing the hypercohomology in the case $A=\kappa(x)$, $I = \kappa(x)^1$.

  We need to compute the Euler characteristic of the sheaf $\END_0' V$:
  \begin{center}
  \begin{tikzpicture}[descr/.style={fill=white}]
  \matrix(m)[matrix of math nodes,
  row sep=2.5em, column sep=2.0em,
  text height=1.5ex, text depth=0.25ex]
  {
    0&0&0\\
    \O_{\P^1}(-Z)&\O_{\P^1}&\O_{\P^1}\\
    \END V (-Z)&\END' V&\END V\\
    \END_0 V (-Z)&\END_0' V&\END_0 V\\
    0&0&0\\
  };
  \path[right hook->,font=\small]
  (m-2-1) edge (m-2-2)
  (m-2-2) edge (m-2-3)
  (m-3-1) edge (m-3-2)
  (m-3-2) edge (m-3-3)
  (m-4-1) edge (m-4-2)
  (m-4-2) edge (m-4-3);
  \path[->,font=\small]
  (m-2-2) edge (m-1-2)
  (m-2-3) edge (m-1-3)
  (m-2-1) edge (m-1-1)
  (m-3-2) edge (m-2-2)
  (m-3-3) edge (m-2-3)
  (m-3-1) edge (m-2-1)
  (m-4-2) edge (m-3-2)
  (m-4-3) edge (m-3-3)
  (m-4-1) edge (m-3-1)
  (m-5-2) edge (m-4-2)
  (m-5-3) edge (m-4-3)
  (m-5-1) edge (m-4-1);
  \end{tikzpicture}
  \end{center}
  The columns in the above diagram are exact, and the rows are inclusions.
  The bundle $\END V$ is degree zero and rank $\Theta^2 r^2$ on $\P^1$ hence has Euler characteristic $\Theta^2 r^2$.
  The sub bundle $\END' V$ loses Euler characteristic due to preserving the parabolic structures at the points of $Z$, and one can calculate $\chi(\END' V) = 0$.
  Since the middle column is exact, the Euler characteristic of our bundle of interest, $\END_0' V$, is $-1$.
  We set $h^0(\END_0' V) = e$, $h^1(\END_0' V) = e+1$.
  Since $\END_0^{\prime *} (V)\otimes\Omega$ is the Serre dual of $\END_0' V$, we have $h^0(\END_0^{\prime *} (V)\otimes\Omega)=d+1$, $h^1(\END_0^{\prime *} (V)\otimes\Omega)=d$.

  Suppose now that $\kappa(x)$ is an algebraically closed field.
  Then since $(V,\phi)$ is stable, the endomorphisms of $V$ commuting with $\phi$ form a division algebra over $\kappa(x)$ and hence are only scalars from $\kappa(x)$.
  This means that the map $[\phi,-]:H^0(\END_0' V)\ra H^0(\END_0^{\prime *} (V))\otimes \Omega$ is rank $e$.
  The Serre dual of this map $H^1(\END_0^{\prime *}(V)\otimes \Omega)\ra H^1(\END_0' (V))$ is also rank $e$.
  This is enough to decide that the hypercohomology of $C$ is concentrated in degree one, of dimension two.
  
  If $\kappa(x)$ is not algebraically closed, take $K$ an algebraic closure and let $(V_K,\phi_K)$ be the base change.
  Since we require that $(V_K,\phi_K)$ is stable, the hypercohomology of $C$ is concentrated in degree one, of dimension two.
  Hence we conclude the same for the deformation theory over $\kappa(x)$.

  Return to the general case of an artinian ring $A$ and square zero extension defined by $A^1$.
  Now $\Hyp^\bullet(C)$ is a complex in $D(A)$ concentrated in degrees zero, one, and two.
  Applying Lemma \ref{nicelemma}, we find that the hypercohomology of $C$ is a free $A$ module of rank two concentrated in degree one.
\end{proof}

\section{Fourier-Mukai Transforms}

The main goal of this section is to show that $H_{r,-1}$ is independent of the parameter $r$.
In fact, we will show a stronger statement which works for a varieties of values of the parameter $d$.
To do this, we first work with the stacks $\E_{r,d}$, which are naturally set up for the integral transforms in \cite{spanish}.
In that paper, the authors expand on Atiyah's result on vector bundles over elliptic curves
by describing a method of producing isomorphisms between moduli spaces of vector bundles on genus one curves with Gorenstein singularities.
By Lemma \ref{arithmeticgenus}, $N_\phi$ is supported on a Gorenstein curve of arithmetic genus one.

In the notation of \cite{spanish}, all of our modules $M_\phi$ are pure of dimension one.
This is because any subsheaf with zero dimensional support would indicate torsion in the underlying $\O_{\P^1}$ module, which is impossible.
Slope, and so stability, are defined in \cite{spanish} in terms of the Hilbert polynomials.
In order to proceed, we must choose an ample line bundle for our spectral curves, and so we pick $\pi^*(\O_{\P^1}(1))$.
Then the Hilbert polynomial of our module $M_\phi$ is $\Theta r t + \Theta r - 1$.
Hence the rank and degree of $M_\phi$ are $\Theta r$ and $\Theta r-1$ in the sense of \cite{spanish} and
the fact that we assume $(V,\phi)$ is stable implies that $M_\phi$ is stable in their sense as well.
Applying their techniques, we will prove the following:

\begin{thm}
  Let $(r,d)$ be a pair of integers with $r>0$ and assume $\gcd(\Theta r, d) = 1$.
  Assume further that $[\Theta r, \Theta r + d]^T$ is in the orbit of $[\Theta,1]^T$ under the action of the matrices:
  \[ 
    A_\psi=
    \begin{bmatrix}
      1&0\\
      1&1
    \end{bmatrix}
    ,\qquad
    A_\phi=
    \begin{bmatrix}
      1&-\Theta\\
      0&1
    \end{bmatrix}
    ,\qquad
    -1=
    -
    \begin{bmatrix}
      1&0\\
      0&1
    \end{bmatrix}
  \]
  For example, the case $(r, d)$ with $d\equiv \pm 1$ modulo $\Theta r$.

  Then the stack $\E^\circ_{r,d}$ is equivalent to the stack $\E^\circ_{1,-1}$.
  Further, we can reduce a $T$ family of either of these objects to a line bundle of the graph of a map $T\ra B$.
  As a consequence we find that the coarse moduli space of any such $\E_{r,d}$ is given by $B$
  and that $\E^\circ_{r,d}\isom \H^\circ_{r,d}$.
  \label{bigtheorem}
\end{thm}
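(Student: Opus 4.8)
The plan is to translate $\H^{\circ}_{r,d}$ into the equivalent stack $\E^{\circ}_{r,d}$ of pure sheaves on spectral curves, and then to run, in families over the Hitchin base, the Fourier--Mukai technology of \cite{spanish}. By Lemma~\ref{hitchinmap} the divisor $\Delta$ defines the Hitchin map $h\colon W\to\P^{1}_{k}$ with $B=h^{-1}(\A^{1})$; its fibres $E_{b}:=h^{-1}(b)$ for $b\in\A^{1}$ are exactly the spectral curves arising in the case $r=1$, and by Lemma~\ref{arithmeticgenus} each $E_{b}$ is a connected Gorenstein curve of arithmetic genus one. Since $h^{0}(\O_{W}(\Delta))=2$, an object of $\E^{\circ}_{r,d}(T)$ (Definition~\ref{substackdefn2}) amounts to a map $\gamma\colon T\to\A^{1}$, which pins down the divisor $E=B\times_{\A^{1},\gamma}T$, together with a sheaf $M$ on $E$ that is flat over $\P^{1}_{T}$, pure of dimension one on fibres over $T$, carries the prescribed lengths along the $E_{i,j}$, and has numerical invariant $(\mathrm{rank}\ \mathrm{over}\ \P^{1},\ \deg\ \mathrm{over}\ E)=[\Theta r,\ \Theta r+d]$. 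The hypothesis $\gcd(\Theta r,d)=1$ is coprimality of the pair $(\Theta r,\Theta r+d)$, so stability and semistability coincide for these invariants and the results of \cite{spanish} apply fibrewise.

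The three matrices are realised by derived autoequivalences: $A_{\psi}$ is the action on $[\Theta r,\Theta r+d]$ of tensoring by $\pi^{*}\O_{\P^{1}}(1)$, the scalar matrix $-1$ is the action of the shift, and $A_{\phi}$ is the action of the relative Fourier--Mukai transform attached to the Poincar\'e sheaf of the family $B\to\A^{1}$; the last exists because the relative compactified Jacobian of $B/\A^{1}$ is canonically $B$ again, which is the genus-one Gorenstein form of Atiyah's theorem \cite{atiyah} established in \cite{spanish}. Given the orbit relation of the statement, I would choose a word $g$ in these matrices taking $[\Theta r,\Theta r+d]$ to $[\Theta,\Theta-1]$ — the invariant of $\E_{1,-1}$, which lies in the orbit of $[\Theta,1]$ since $-1\equiv\pm1$ modulo $\Theta$ — and take the corresponding composite integral transform. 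One must check that this composite satisfies the $\mathrm{WIT}$ condition on the moduli in question, so that it carries stable sheaves of the first invariant to stable sheaves of the second rather than to genuine complexes (this is where coprimality and the orbit hypothesis enter, via \cite{spanish}), and that purity, flatness over $\P^{1}_{T}$ and the length conditions along the $E_{i,j}$ are preserved. Functoriality in $T$ then yields the equivalence $\E^{\circ}_{r,d}\simeq\E^{\circ}_{1,-1}$.

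For the structure of $\E^{\circ}_{1,-1}$: now $r=1$, so on the $\Theta$-fold cover $E_{b}$ the sheaf $M$ is rank-one torsion-free of curve-degree $\Theta-1$, and as $\gcd(\Theta,\Theta-1)=1$ the fine moduli space of such sheaves on $E_{b}$ is, by \cite{spanish}, its compactified Jacobian, namely $E_{b}$ again. In families over $\A^{1}$ this says that an object of $\E^{\circ}_{1,-1}(T)$ is the Hitchin map $\gamma\colon T\to\A^{1}$ together with a section $T\to B\times_{\A^{1},\gamma}T$ and a line bundle pulled back from $T$; the section is the graph of a map $T\to B$ lifting $\gamma$, the transformed sheaf is the corresponding line bundle on this graph, and the pulled-back line bundle is exactly the $\G_{m}$-gerbe ambiguity rigidified by the datum $\iota$. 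Hence $\E^{\circ}_{1,-1}$, and with it every $\E^{\circ}_{r,d}$ satisfying the hypotheses, is a $\G_{m}$-gerbe over $B$; this gives both the reduction of a $T$-family to a line bundle on the graph of a map $T\to B$ and the fact that the coarse moduli space of $\E_{r,d}$ (equivalently, of $\E^{\circ}_{r,d}$) is $B$.

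Finally, for $\E^{\circ}_{r,d}\iso\H^{\circ}_{r,d}$ I would use the natural morphism $\E^{\circ}_{r,d}\to\H^{\circ}_{r,d}$ that pushes a sheaf on its spectral curve down to a Higgs bundle (constructed via the spectral-curve correspondence of Sections 2 and 3; the output does lie in $\H$ by Lemma~\ref{stricttransformmodule}). It is fully faithful by Lemma~\ref{substack} (whose argument extends to cases III and IV). It is essentially surjective on field-valued points by Lemma~\ref{victoryforfields}, which says every stable $(V,\phi)$ is the pushforward of a pure sheaf on the reduced spectrum; and the deformation--obstruction computation of Section~4 — in particular Lemma~\ref{computedeformations}, which shows the governing hypercohomology is a free rank-two module concentrated in degree one, matching the deformation theory of a line bundle on a fibre of $B\to\A^{1}$ — shows the morphism is formally \'etale (it induces isomorphisms on all tangent and obstruction spaces). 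Hence essential surjectivity propagates over square-zero extensions, then over complete local rings, then by a limit argument over all locally noetherian test schemes, and the morphism is an equivalence. The step I expect to be the main obstacle is the Fourier--Mukai one: constructing the relative compactified Jacobian and Poincar\'e sheaf for the family $B\to\A^{1}$ of possibly non-integral genus-one Gorenstein curves, verifying the $\mathrm{WIT}$/$\mathrm{IT}$ properties that keep sheaves being sheaves, and carrying the length conditions along the $E_{i,j}$ (and the trivialisation $\iota$) through the transforms — precisely the work for which \cite{spanish} is cited.
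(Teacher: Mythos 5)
Your proposal follows essentially the same route as the paper: translate to the stacks $\E^\circ_{r,d}$ of pure sheaves on spectral curves, realize $A_\psi$ by tensoring with $\pi^*\O(1)$ and $A_\phi$ by the relative Fourier--Mukai transform of \cite{spanish} (the paper takes the kernel to be the ideal of the relative diagonal, which is the Poincar\'e-type kernel you describe since the compactified Jacobian of these genus-one Gorenstein curves is the curve itself), check fibrewise WIT/stability and the lengths along the $E_{i,j}$, propagate to families by the base-change lemma, reduce to $r=1$ to identify the coarse space with $B$ via the graph of a map $T\ra B$, and finally compare $\E$ with $\H$ using Lemma \ref{victoryforfields} on points together with the matching rank-two unobstructed deformation theories. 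The steps you flag as the main obstacles (the family WIT check and carrying the length conditions through the transform) are exactly the ones the paper discharges with Lemma \ref{nicelemma} and the exact triangle from \cite{spanish}.
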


In order to make this argument in families, we need to study a bundle transformed under some integral transforms by analyzing its derived fibers.
It will be most convenient to have the following lemma handy for the proof of the theorem.

\begin{lemma}
  Let $\pi:E\ra T$ be a quasi-projective, flat morphism of Noetherian schemes and let $A$ be an object of $D^-(E)$.
  Assume that for every point $p\in T$, the derived pullback (which exists because of $T$-local existence of enough vector bundles) of $A$ to $\pi\i(p)$ is a sheaf in degree zero.
  Then $A$ is a sheaf in degree zero as well, and is flat over $T$.
  \label{nicelemma}
\end{lemma}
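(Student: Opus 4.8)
The assertion is local on $E$ and on $T$, so I would first reduce to the case $T=\Spec R$ with $R$ Noetherian and $E$ quasi-projective over $R$. Since $E$ is quasi-projective and $A$ has coherent cohomology (in every application in this paper $A$ is in fact perfect), one may represent $A$ by a bounded-above complex $P\ub$ of vector bundles on $E$. The flatness of $\pi$ makes each $P^i$ flat over $R$, so the derived restriction to a fibre is computed naively: for $\fp\in\Spec R$ one has $L\iota_\fp^*A\simeq P\ub|_{E_\fp}$. Thus the hypothesis says exactly that the complex of coherent sheaves $P\ub|_{E_\fp}$ is exact off degree zero, for every $\fp\in\Spec R$.

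Next I would trim the positive degrees. Writing $\mathcal{H}^n(P\ub)$ for the top cohomology sheaf, the fact that top cohomology commutes with the right-exact functor $-\ten_R\kappa(\fp)$ gives $\mathcal{H}^n(P\ub)\ten_R\kappa(\fp)=\mathcal{H}^n(P\ub|_{E_\fp})$, which vanishes for all $\fp$ whenever $n>0$. So $\mathcal{H}^n(P\ub)$ is a coherent sheaf on $E$ that vanishes on every fibre of $\pi$; since the fibres cover $E$, Nakayama at each point of $E$ forces $\mathcal{H}^n(P\ub)=0$ for $n>0$. Replacing $P\ub$ by the good truncation $\tau^{\leq n-1}P\ub$ — whose new top term $\ker(P^{n-1}\ra P^n)$ is locally a direct summand of the vector bundle $P^{n-1}$, hence again a vector bundle — and noting that this does not disturb the fibrewise hypothesis (the quasi-isomorphism is between complexes of $T$-flat sheaves, so still becomes a quasi-isomorphism after $-\ten_R\kappa(\fp)$), I may assume $P^i=0$ for $i>0$.

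Now I would induct on the length $N$ of $A$, using here that $A$ is bounded. If $N=0$ then $A=P^0$ is a coherent sheaf flat over $T$, concentrated in degree zero, and we are done. If $N\geq1$, consider the bottom differential $d\colon P^{-N}\ra P^{-N+1}$: for each $\fp$ its restriction to $E_\fp$ has kernel $\mathcal{H}^{-N}(P\ub|_{E_\fp})=0$, so $d|_{E_\fp}$ is injective. From the short exact sequence $0\ra \mbox{im}(d)\ra P^{-N+1}\ra \mbox{coker}(d)\ra 0$ one reads $\Tor_1^{\O_T}(\mbox{coker}(d),\kappa(\fp))=\ker\big(\mbox{im}(d)|_{E_\fp}\ra P^{-N+1}|_{E_\fp}\big)=0$ for every $\fp$; by the local criterion for flatness (legitimate since $\mbox{coker}(d)$ is coherent on $E$, which is of finite type over $T$) the sheaf $\mbox{coker}(d)$ is flat over $T$. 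One then checks, again by the Nakayama-on-fibres argument, that $d$ itself is injective, so $A\simeq P\ub$ is quasi-isomorphic to $[\,\mbox{coker}(d)\ra P^{-N+2}\ra\cdots\ra P^0\,]$, a bounded complex of $T$-flat coherent sheaves of length $N-1$ still satisfying the hypothesis. By induction $A$ is quasi-isomorphic to a single coherent sheaf in degree zero, namely $\mathcal{H}^0(A)$, which is flat over $T$; this is the claim.

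The part needing the most care is the bookkeeping in the inductive step: verifying that the successive quasi-isomorphisms remain between complexes of $T$-flat sheaves (so restriction to fibres stays underived and the hypothesis is inherited), and that the local criterion for flatness genuinely applies to $\mbox{coker}(d)$ viewed over $\O_T$. The only input beyond the stated hypotheses is boundedness of $A$, which holds in all applications in this paper (there $A$ is perfect); for a fully general $A\in D^-(E)$ one would instead quote the fibrewise criterion for $T$-flat tor-amplitude in $[0,0]$, of which this lemma is the degenerate case.
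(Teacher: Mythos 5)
Your argument is correct in substance and, in the bounded case, complete; it just takes a genuinely different path from the one in the paper. The paper's proof works directly with the cohomology sheaves $A^j$ of $A$ via the spectral sequence whose $E_2$-term is $L_iu^*(A^j)$ (with $u$ the inclusion of a fibre), converging to the cohomology of the derived fibre: Nakayama on fibres kills $A^j$ for $j>0$, the shape of the page then forces $L_1u^*(A^0)=0$, the local criterion of flatness (the same EGA reference you invoke) converts this into $T$-flatness of $A^0$, and once all higher $L_iu^*(A^0)$ vanish each term $u^*(A^{-k})$ survives to $E_\infty$ inside a vanishing abutment and so is zero, killing the negative cohomology sheaves one at a time. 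You instead replace $A$ by a termwise $T$-flat complex of vector bundles, truncate away the positive degrees, and peel the complex from the bottom by induction on its length, checking at each step that the bottom differential is fibrewise injective, that its cokernel is $T$-flat (again by the local criterion, correctly applied pointwise on $E$ over $\O_{T,\fp}$), and that the shortened termwise-flat complex still computes the derived fibres; all of these verifications go through as you sketch them. Your route avoids the spectral sequence entirely and makes the flatness bookkeeping explicit, which is a virtue. Its cost is the extra hypothesis that $A$ be bounded, since the induction needs a bottom term to start from, whereas the spectral-sequence argument handles all of $D^-$ at once (its descending induction on cohomological degree never refers to a bottom of the complex). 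You flag this honestly, and since the lemma is only ever applied to $\Phi(N)$ for $N$ a sheaf and a kernel of finite homological dimension, so that $A$ is in fact bounded, the restriction is harmless for this paper; but to get the lemma exactly as stated you would need either to prove the tor-amplitude criterion you allude to, or to rearrange your argument so that it descends through the cohomology sheaves from the top rather than ascending through the terms of the resolution from the bottom.
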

\begin{proof}
  Write $A^i$ for the $i$-th cohomology sheaf of $A$.
  We have a spectral sequence for restriction to the fiber $E_p = \pi\i(p)$ in terms of the cohomology sheaves. 
  Denote by $u$ the inclusion map from $E_p$ to $E$.
  This spectral sequence has the group $L_i u^*(A^j)$ at the $(-i,j)$ spot on the second page.
  If there is a positive $j$ with $A^j\neq 0$, then there is a greatest such $j$.
  By Nakayama's Lemma, the term at $(0,j)$ must be non-zero for some $p$, which contradicts the assumption that all derived fibers are concentrated in degree zero.
  So our page two has terms only at non-positive coordinates.

  We'd like to show that $L_i u^*(A^j)$ is zero for $j = 0 $ and $i>0$.
  We start with the case $i=1$.
  This term must be zero as there is nothing in the $(-3-k,1+k)$ or $(1+k,-1-k)$ spots for the rest of eternity ($k\geq 0$).
  Then
  \[L_1 u^*(A^0) =0 \]

  Because the functors $L_i u^*$ are calculated by resolutions by vector bundles, we can relate the stalks of these derivatives to usual constructions in homological algebra.
  Specifically, for any $q\in E_p$, if we take the stalk at $q$ of $L_1 u^*(A^0)$, we find that
  \[\Tor_1^{\O_{E,q}}(\O_{E_p,q},A^0_q) = 0\]
  Since $\O_{E_p,q} \isom \O_{E,q}\otimes_{\O_{T,p}}\kappa(p)$, we have the following isomorphism of functors:
  \[(-\otimes_{\O_{E,q}}(\O_{E,q}\otimes_{\O_{T,p}}\kappa(p)))\isom (-\otimes_{\O_{T,p}}\kappa(p))\]
  If we restrict scalars and so consider $(A^0_q\otimes_{\O_{T,p}}\kappa(p))$ to be a module over $\O_{T,p}$,
  we find that
  \[\Tor_1^{\O_{T,p}}(\kappa(p),A^0_q) = 0\]
  Now $A^0_q$ is a finite module over $\O_{E,q}$ and $\kappa(p)$ is a field, so our situation satisfies the hypothesis of (\cite{ega1}, 10.2.2),
  and we see that $A^0_q$ is flat over $\O_{T,p}$.
  Since $A^0_q$ is flat over $T$, all higher Tor groups are also zero, as desired.

  In particular $\Tor_2(E_p,A^0) = 0$.
  This implies that the term $\Tor_0(E_p,A^{-1})$ must already be converged\textemdash to zero.
  If $A^{-1}$ were non-zero, this could not be true for every $p$ by Nakayama's lemma, hence $A^{-1} =0$.
  Similarly for all lower cohomologies of $A$, as desired.
\end{proof}

Having established this, we return to the proof of Theorem \ref{bigtheorem}.

\begin{proof}
  Let $(f,N)$ be a family from $\E_{r,d}(T)$. So $f$ is a map $T\ra \A^1$ and $N$ is a module as in definition \ref{substackdefn2} on $E=E_f=T\times_{\A^1}B$.
  We'll suppress notation and write $E$ for $E_f$.
  Note that, as a result of the discrepancy between degree and Euler characteristic on $\P^1$, the fiber of an object in $\E_{r,d}$ is a bundle of degree $\Theta r + d$.
  Also, due to the way rank is measured on these singular curves, the module is rank $\Theta r$ on $E$.
  We are interested in two Fourier-Mukai transforms from $E\ra E$.

  The first one has kernel $\pi^*\O(1)$ on the diagonal of $E\times_T E$.
  The corresponding transform on the category $D^b(E)$ is to tensor by the line bundle $\pi^*\O(1)$, for convenience we will call this operator $\Psi$.
  This gives an easy equivalence $\E^\circ_{r,d} \isom \E^\circ_{r,d+r}$ which corresponds to the action of $A_\psi$ on the vector $[\Theta r, \Theta r + d]^T$.
  
  The second operation is more complicated.
  Let $I_\Delta$ be the ideal sheaf of the diagonal $E\xrightarrow{\Delta}E\times_T E$.
  This yields a Fourier-Mukai equivalence on $E$ which we will call $\Phi$.
  The kernel $I_\Delta$ is finite homological dimension over $E$ as it forms an exact triangle with the kernels $\O_\Delta$ and $\O_{E\times E}$,
  which both have finite homological dimension.
  This means that $\Phi$ maps $D^b(E)\ra D^b(E)$.
  
  To compute the results of this transform in families, we would like to compare to the same transform on fibers over points of $T$.
  Let $A$ be an object in the bounded derived category of $E$. 
  If we compute the derived fiber of $\Psi(A)$ at a point $t\in T$, it is the same as taking the derived fiber of $A$ and twisting by $\pi^*\O(1)$.
  This is simply because the pullback functor to the fiber is monoidal.
  
  What can we say about the fiber of $\Phi(A)$?
  Our candidate is the Fourier-Mukai operator on the fiber of $E_t$ with kernel the ideal sheaf of the diagonal in $E_t\times E_t$, abuse notation and call this $\Phi_t$.
  The operator $\Phi$ is composed of three steps, and between each of these stages we may restrict to fibers to compare with $\Phi_t$.
  The first stage in both $\Phi$ and $\Phi_t$ is a derived pullback, so it certainly commutes with restriction to fibers.
  The third stage in both operators is a pushforward, and these will commute with derived restriction to fibers by base change.
  The second stage, tensoring by the kernel, requires that we compute the derived fiber of the kernel $I_\Delta$.
  The underived fiber of $I_\Delta$ is the ideal sheaf of the diagonal in $E_t\times E_t$.
  $I_\Delta$ is the kernel of the surjective map $\O_{E\times E}\ra \O_E$, and since $E\ra T$ is flat, the flatness of $I_\Delta$ over $T$ follows.
  Since $I_\Delta$ is flat over $T$, its derived fiber at $t\in T$ agrees with its underived fiber.
  Hence the fiber of $\Phi$ at $t$ is what we have called $\Phi_t$.
  
  The module $N$ is, fiberwise, a sheaf of degree $\Theta r+d$ and rank $\Theta r$ due to our choice of ample bundle.
  Let $t\in T$ be an arbitrary point.
  Consulting \cite[Prop. 1.9, 1.13]{spanish}, the fiber $\Phi(N)|_t$ is a semi-stable sheaf of rank $\Theta r - \Theta (\Theta r + d)$ and degree $\Theta r + d$ concentrated in degree $1$
  if $\Theta r + d \leq r$.
  If the opposite is true then the fiber is a semi-stable sheaf of rank $\Theta(\Theta r + d) - \Theta r$ and degree $-\Theta r - d$ concentrated in degree $0$.
  In this former case, we will abuse notation and write $\Phi(N)$ for the first cohomology sheaf of $\Phi(N)$.
  In either case, $\Phi$ will induce an equivalence $\E^\circ_{r,d}\ra \E^\circ_{r',d'}$ where $r', d'$ are the parameters for $\E$ corresponding to sheaves of degree and rank
  \[ [\Theta r ,\Theta r + d]^T\cdot A_\phi\]
  Explicitly: $r' = r - (\Theta r + d)$ and $d' = d - \Theta(\Theta r + d)$.
  Before declaring victory, we must check that the transformed sheaf is balanced in the sense that it has the expected length
  when restricted to the exceptional divisors $E_{i,j}$.
  The is easily checked once one has the exact triangle provided on \cite[p. 8]{spanish}:
  \[\Phi(\mathcal{E})[-1]\ra R\Gamma(\mathcal{E})\otimes \O_{S'} \ra \mathcal{E} \ra \Phi(\mathcal{E})[1]\]
  which we apply here to $\mathcal{E} = (N)$.
  Knowing that $\phi(\mathcal{E})$  and $\mathcal{E}$ are sheaves concentrated in degree zero or one and that $\chi(R\Gamma(\mathcal{E})) = -1$,
  we can deduce that $\mathcal{E}$ is dimension $c_ir$ when restricted to the divisor $E_{i,j}$ iff $\Phi(\mathcal{E})$ is dimension $c_i(r')$ when restricted to the same divisor.
  This latter fact ensures that our generalized vector bundles remain balanced rank on different components of $S'$, should they exist.

  Furthermore, $\Phi(N)|_t$ is a sheaf pure of dimension one, so flat over $\P^1_t$, 
  which implies that the derived fiber to a point $p\in \P^1_t$ is a vector space in degree 0.
  Applying Lemma \ref{nicelemma} to the sheaf $\Phi(N)$ and the morphism $E_f\ra \P^1_T$
  we can conclude that the transformation of the sheaf is still a sheaf in degree 0, flat over $\P^1_T$, and with the right numerical invariants to be an object of $\E_{r',d'}$.

  Inducting on $r$, we have functors connecting the stacks $\E^\circ_{r,d}\isom \E^\circ_{1,1}$ in all four cases I through IV.
  The equivalences just described in terms of integral transforms cannot produce isomorphisms of the Deligne-Mumford stacks $\E_{r,d}$.
  This is because  a trivialization of the determinant line bundle of $V$ does not yield a trivialization of the determinant of its transform.

  At any rate, the integral transforms described above are enough to yield isomorphisms of coarse spaces.
  Applying once more the integral transform $\Phi\i$, we obtain a sheaf $\F$ on $E_f$ which is flat over $T$ 
  and which, in every fiber $E_t$, is isomorphic to the structure sheaf of a point.
  The pushforward of $\F$ to $T$ is a line bundle, so locally on $T$ we may trivialize $\F$ so it is the structure sheaf of a graph of a map $T\ra B$.
  Hence the Zariski sheafification of the presheaf of isomorphism classes of $\E_{r,d}$ is represented by the scheme $B$.

  Since $B$ is smooth, this tells us that the stack $\E_{r,d}$ has an unobstructed deformation theory given by a free module of rank two.
  Recall (Lemma \ref{computedeformations}) that the deformation theory of $\H_{r,d}$ is also two dimensional and unobstructed.
  We have a map of coarse moduli of stacks $\E_{r,d}\ra\H_{r,d}$ and a corresponding map of coarse spaces $E_{r,d}\ra H_{r,d}$
  By Lemma \ref{victoryforfields}, this coarse space map is an isomorphism on points.
  Since we now know that $\E$ and $\H$ have the same deformation/obstruction theory, the map is an isomorphism on tangent vectors,
  hence an isomorphism.

  We would still like to upgrade this to show that $\E\ra \H$ is an isomorphism of stacks.
  To any family $(V,\phi)$, we have the associated module $N_\phi$ on $B\times T$ and we would like to show that $N_\phi$ is supported on a divisor of type $\Delta$.
  The characteristic polynomial of $\phi$ is a section on $|\Omega'|$ of the line bundle $\pi^*\Omega^{\prime \otimes \Theta r}$.
  The top term of $P_\phi$ is $y^{\Theta r}$, so as a section on $\P(\Omega'\oplus \O)$ it has a pole of order $\Theta r$ at the infinity section,
  hence we regard $\P_\phi$ as a section of $\pi^*\Omega^{\prime \otimes \Theta r}(\Theta r\ \infty)$.
  Since the strict transform $S'$ deducts $c_{i,j} r$ copies of the exceptional divisor $E_{i,j}$ from the pullback of $S$,
  the equation for $S'$ is a section of $\O(r\Delta)$.
  We have the equation of the strict transform 
  \[P'_\phi\in \Gamma(W\times T,\O(r\Delta)) \isom \Gamma(\P^1_T, \O(r)) \isom \Sym^r \Gamma(\P^1_T,\O(1))\]
  Since the subscheme $S'$ doesn't intersect the divisor at infinity of $W\times T$, we may pick $x$ a linear function on $P^1$ not vanishing at infinity 
  and set $P'_\phi = f(x)$ for $f$ a monic polynomial of degree $r$.
  Say $f(x) = x^r + a x^{r-1}\cdots$. Set $g = (x+\frac{a}{r})$. The only way $f$ can be an $r$th power is if $f=g^r$, which we will ultimately show.

  In the fiber over any point of $T$, Lemma \ref{victoryforfields} guarantees that $g$ annihilates the module $N_\phi$.
  All we want to show is that $g$ annihilates the whole module $N_\phi$ over $T$, so without loss of generality, assume $T$ is affine with coordinate ring $A$.
  Let $Q_i$ form a primary decomposition of the zero ideal in $A$ and set $P_i=\sqrt{Q_i}$.
  For every $i$, we have the Artin local ring $(A_{P_i}/Q_i, P_i)$.
  The Higgs bundle at the closed point of this Artin ring is in the image of $\E_{r,d}$, and because the map $\E_{r,d}\ra \H_{r,d}$
  induces an isomorphism of deformation obstruction theories, the Higgs bundle on the entire ring $A_{P_i}/Q_i$ must also be in the image of $\E$.
  This means that, restricted over the Artin ring, $f = g^r$ and $g$ annihilates $N_\phi$.
  Since the $Q_i$ form a primary decomposition of $0$, $g$ must annihilate $N_\phi$ over the whole ring $A$.
  So, to the module $N_\phi$ associated to $(V,\phi)$ is supported on a curve of type $\Delta$, as in the definition of $\E$, 
  and hence $(V,\phi)$ is in the image of the stack $\E_{r,}$.
\end{proof}

\pagebreak

\bibliographystyle{plain}
\bibliography{higgs}
\end{document}